\newtheorem{thm}{Theorem}
\newtheorem{lem}{Lemma}
\newtheorem{prop}{Proposition}
\newtheorem{rem}{Remark}
\newtheorem{cor}{Corollary} 
\newcommand{\veps}{\varepsilon}
\title{Convolution Quadrature for the quasilinear subdiffusion equation\footnote{This is the accepted version of the manuscript to appear in SIAM Journal on Numerical Analysis.}}
\date{}
\author{Maria L\'opez-Fern\'andez\thanks{Department of Mathematical Analysis, Statistics and O.R., and Applied Mathematics, Faculty of Sciences, University of Malaga, Bulevar Louis Pasteur, 31 29010 Malaga, Spain, email: maria.lopezf@uma.es}, \and \L ukasz P\l ociniczak\thanks{Faculty of Pure and Applied Mathematics, Wroc{\l}aw University of Science and Technology, Wyb. Wyspia{\'n}skiego 27, 50-370 Wroc{\l}aw, Poland, email: lukasz.plociniczak@pwr.edu.pl}}
\begin{document}
\maketitle

\begin{abstract}
	We construct a Convolution Quadrature (CQ) scheme for the quasilinear subdiffusion equation of order $\alpha$ and supply it with the fast and oblivious implementation. In particular, we find a condition for the CQ to be admissible and discretize the spatial part of the equation with the Finite Element Method. We prove the unconditional stability and convergence of the scheme and find a bound on the error. Our estimates are globally optimal for all $0<\alpha<1$ and pointwise for $\alpha\geq 1/2$ in the sense that they reduce to the well-known results for the linear equation. For the semilinear case, our estimates are optimal both globally and locally.  As a passing result, we also obtain a discrete Gr\"onwall inequality for the CQ, which is a crucial ingredient in our convergence proof based on the energy method. The paper is concluded with numerical examples verifying convergence and computation time reduction when using fast and oblivious quadrature. \\	
	
	\textbf{Keywords:} convolution quadrature, subdiffusion, quasilinear equation, Caputo derivative, backward differentiation formula
\end{abstract}

\section{Introduction}
Consider the following quasilinear subdiffusion equation with vanishing Dirichlet condition on a smooth domain $\Omega\subseteq\mathbb{R}^d$ with $d\in\mathbb{N}$
\begin{equation}\label{eqn:MainPDE}
	\begin{cases}
		\partial^\alpha_t u = \nabla\cdot \left(D(x,t,u) \nabla u\right) + f(x,t,u), & x\in \Omega, \; t>0, \; \alpha\in (0,1),\\
		u(x,0) = 0, & x\in \Omega, \\
		u(x,t) = 0, & x \in \partial\Omega,
	\end{cases}
\end{equation}
where $\partial^\alpha_t$ is the partial Caputo time derivative 
\begin{equation}\label{eqn:Caputo}
	\partial_t^\alpha u(x, t) := I^{1-\alpha} \frac{\partial}{\partial t} u(x, t) = \frac{1}{\Gamma(1-\alpha)}\int_0^t (t-s)^{-\alpha} \frac{\partial u}{\partial t}(x,s) ds, \quad 0<\alpha<1,
\end{equation}
defined with the help of the fractional integral
\begin{equation}\label{eqn:FracInt}
	I^\alpha u(x, t) := \frac{1}{\Gamma(\alpha)} \int_0^t (t-s)^{\alpha-1} u(x, s) ds, \quad \alpha > 0,
\end{equation} 
Note that the vanishing of the initial condition in \eqref{eqn:MainPDE} can be assumed without any loss of generality. To wit, assume that $u(x,0) = u_0(x) \in H_0(\Omega)\cap H^2(\Omega)$, then by introducing $v = u - u_0$ we can easily show that $v$ satisfies \eqref{eqn:MainPDE} with a new diffusivity $\widetilde{D}(x,t,v) = D(x,t,v+u_0)$ and a new source $\widetilde{f}(x,t,v) = f(x,t,v+u_0) + \nabla\cdot(D(x,t,v+u_0) \nabla u_0)$. Therefore, in what follows, we will consider the general case \eqref{eqn:MainPDE}.

Our assumptions on the regularity of the coefficients are as follows. Let $D\in C^2(\Omega, \mathbb{R}_+, \mathbb{R})$ and $f\in C^2(\Omega, \mathbb{R}_+, \mathbb{R})$. Moreover, we assume that $D$ is bounded from below and above, and $L>0$ is the Lipschitz constant for the third argument of $f$ and $D$, that is
\begin{equation}\label{eqn:Assumptions}
	0<D_-\leq D(x,t,u) \leq D_+, \quad |D(x,t,u)-D(x,t,v)| + |f(x,t,u) - f(x,t,v)| \leq L |u-v|,
\end{equation}
and this guarantees well-posedness of the problem.  However, even with weaker conditions, it has been proven in \cite{Zac12} that \eqref{eqn:MainPDE} has a unique strong solution. More specifically, for a $C^2$-smooth domain $\Omega$ and with $p>d+2/\alpha$ we have $u\in W^{\alpha,p}([0,T];L^p(\Omega)) \cap L^p\left([0,T];W^{2,p}(\Omega)\right)$. Additional solvability results can be found in \cite{akagi2019fractional}. Furthermore, large-time decay estimates have been established in \cite{Ver15,Dip19}. On the other hand, viscosity solutions to \eqref{eqn:MainPDE} have been studied in \cite{Top17}. In particular it has been shown that $|u(x,t)| \leq C t^\alpha$ for $t\in [0,T]$, suggesting $\alpha$-H\"older continuity at the time origin. Some additional results that are also valid in the degenerate case when diffusivity can vanish were proved in the weak setting in \cite{akagi2019fractional,Wit21,All16}. The semilinear constant coefficient case, that is when $D=$const. and $f=f(x,t,u)$, has been investigated, for example, in \cite{al2019numerical} where $\alpha$-H\"older continuity of the solution was established under sufficient regularity conditions on the source. The linear case is very well understood in the constant and $x$-dependent diffusivity. Details of the solution can be found in \cite{sakamoto2011initial,kubica2020time}. The linear case with time-dependent diffusivity has been carefully studied in \cite{JinLiZhou2019}, where a detailed regularity analysis is provided and error estimates are derived for a scheme based on BDF1-CQ, and in \cite{JinLiZhou2020}, where a corrected BDF2-CQ scheme is proposed and proven to achieve the second order of convergence. After submitting this paper, we have become aware of the reference \cite{JinQuanWohlZhou24}, where a careful regularity analysis for the quasilinear equation \eqref{eqn:MainPDE} with a linear source $f(x,t,u)=f(t)$ is presented and its discretization is analyzed using a corrected BDF (2)-CQ scheme, under more stringent regularity assumptions on the data than we make here and by using a very different analysis technique.

The most spectacular difference between a solution to classical diffusion and its slower, subdiffusive version is the amount of smoothing of the initial data. To be precise, it is known that for the linear PDE with $D=$ const. and $f\in C^{m-1}([0,T];L^2(\Omega))$ with $I_t^\alpha(\|\partial^{(m)}_t f(t)\|) < \infty$ we have \cite{jin2019numerical} (Theorem 2.1, (iii))
\begin{equation}
	\|u^{(m)}(t)\| \leq C \sum_{k=0}^{m-1} t^{\alpha-(m-k)} \|\partial^{(k)}_t f(0)\| + I_t^\alpha(\|\partial^{(m)}_t f(t)\|),
\end{equation}
where the superscript denotes the time derivative of the solution regarded as a mapping from $[0,T]$ into the $L^2(\Omega)$ space. This means that even for very smooth $f$, the solution can still be of limited regularity at $t=0$ unless sufficiently many time derivatives of $f$ initially vanish. It is also known from \cite{al2019numerical} that the solution to the semilinear equation with an initial condition $u_0\in H^{\nu}$ with $\nu\in(0,2]$ satisfies the following regularity estimates
\begin{equation}
	\label{eqn:Regularity}
	\begin{cases}
		u \in C^\frac{\alpha\nu}{2}([0,T]; L^2(\Omega)) \cap C([0,T]; H^\nu(\Omega)) \cap C((0,T]; H^2(\Omega)), &\\
		\|\partial^{(m)}_t u(t)\| \leq C(u) t^{\alpha-m}, & m = 0,1,
	\end{cases}
\end{equation}
Although the solution is continuous on $[0,T]$ it has a singular time derivative, and we have to expect that for our quasilinear problem the situation can be at most as regular as above. Therefore, in what follows, we assume that our solution has the following regularity
\begin{equation}
	\label{eqn:RegularityAssumtion}
	u \in C^2((0,T]; H_0^1(\Omega) \cap H^2(\Omega)), \quad\|\partial^{(m)}_t u
	(t)\| \leq C(u) t^{\alpha-m} \text{ for } m = 0, 1, 2, \quad \|\nabla u(t) \|_\infty \leq C.
\end{equation}
The literature on CQ schemes for semilinear equations contains some interesting results. In \cite{jin2018numerical} authors established a global Euler-CQ $\alpha$-order of convergence in time assuming smooth initial data. This result was improved and generalized in \cite{al2019numerical} to nonsmooth initial data $u_0\in \dot{H}^\nu(\Omega)$, with $\nu\in (0,2]$, yielding order $1$ in time locally and $\alpha \nu /2$ globally. This is in agreement with our numerical computations for the quasilinear equation presented below (for smooth initial data) and also corresponds closely with the linear case. A higher order CQ linearised scheme was developed in \cite{wang2020high}. Authors proved that even for smooth initial data and high order BDF, the scheme has a limited order of convergence. In the quasilinear case we can expect that results obtained will be at most as the ones in the semilinear version of the subdiffusion equation.

The governing equation \eqref{eqn:MainPDE} arises in many areas of science as a model of subdiffusive phenomena. Generally speaking, \emph{subdiffusion} denotes a slower than usual random motion of a collection of particles, in contrast to the classical diffusion and faster evolution known as superdiffusion. To be more precise, if we consider a randomly moving particle with a mean-squared displacement proportional to $t^\alpha$, then we say that it is classically dispersing when $\alpha=1$. Sub- and superdiffusive evolution occurs when $0<\alpha<1$ and $1<\alpha<2$, respectively \cite{Met00,Kla12}. Another important application of \eqref{eqn:MainPDE} arises in hydrology when considering moisture percolation inside a porous medium \cite{El20,kuntz2001experimental}. A derivation of our governing equation in the hydrological setting has been given in \cite{Plo15}, where it has been shown that the slower than classical evolution can be a consequence of the fluid being trapped in some regions of the porous medium. This can be the result of nonhomogeneity or chemical reactions that take place in the domain \cite{El20}. Note that it has been observed that the evolution of moisture within a porous medium necessarily has to be described by a nonlinear equation, since the diffusivity can change by orders of magnitude when the pores are filled with water \cite{bear2013dynamics}. Other important applications of the subdiffusion equation can be found, for example, in: biology \cite{Sun17}, finance \cite{magdziarz2009black}, and chemistry \cite{weeks2002subdiffusion} to name only a few examples.  

Fractional differential equations have been extensively investigated both analytically and numerically. In order not to go too far in reviewing the previous results, we will focus only on numerical methods for the subdiffusion equation in its various forms. Several numerical schemes have been devised to study \eqref{eqn:MainPDE}. As mentioned above, the L1 scheme was applied in \cite{plociniczak2023linear} along with convergence proofs. An interesting account of an even more general problem - including a stochastic term - has been investigated numerically in \cite{Liu18}. To our knowledge, this is just the beginning of rigorous numerical analysis of the quasilinear subdiffusion equation, and several authors are making progress in this field. For the time-fractional parabolic PDEs of a simpler form, one can also find many interesting results. For example, a semilinear equation with constant diffusivity has been discretized with the backward Euler scheme in time and the FEM in space in \cite{al2019numerical} and with higher-order convolution quadratures in \cite{li2022exponential}. In these papers, the authors allowed for nonsmooth initial data, which is a realistic and more difficult case. The numerical analysis of this problem was later expanded to include the variable in space and time diffusivity, with a linear source in \cite{JinLiZhou2019,Mus18}. Lately, optimal-order estimates for the semidiscrete Galerkin numerical method with nonsmooth data and the fully general but semilinear subdiffusion equation have been obtained in \cite{plociniczak2022error} under weak assumptions. Finally, we mention a few notable papers that introduced and analyzed various numerical methods for purely linear equations with the Caputo time derivative. In \cite{jin2016two} two fully discrete schemes based on modified convolution quadrature have been developed and have been shown to achieve the optimal order of convergence with respect to the smoothness of the initial data. The L1 method has been utilized, for example, in \cite{kopteva2019error, Lia18, stynes2017error}, where optimal order estimates have also been given even for nonuniform grids.

We discretize \eqref{eqn:MainPDE} in space by applying the Finite Element Method with piecewise linear elements and consider for the temporal approximation of the semidiscrete problem a semi-implicit scheme where the fractional derivative is approximated by Lubich's Convolution Quadrature (CQ) method \cite{Lu88I}. We derive sufficient conditions for the CQ that guarantee the stability and convergence of the resulting scheme. As a side result, we are able to obtain a new version of the Gr\"onwall's inequality that is suitable for use in the context of admissible convolution quadratures. To the best of our knowledge, this is the first CQ approach to discretization of the time-fractional quasilinear diffusion equation. A crucial point in our convergence proof is based on the aforementioned Gr\"onwall's lemma and a new coercivity result for the CQ methods (other results concerning a different approach to CQ coercivity can be found in \cite{banjai2019runge} and in the monograph \cite{banjai2022integral}, Section 2.6). Thanks to these, the quasilinear case can be analyzed via the energy method as opposed with previous operator approaches. Therefore, we are able to present a rigorous analysis of a nonlinear subdiffusion equation based on a CQ scheme that allows for a fast and oblivious implementation. Indeed, by applying the algorithm in \cite{BanLo19}, the memory requirements can be reduced from the $O(N)$  required by a straightforward implementation of the CQ, see \cite{Lu88II}, to $O(\log(N))$, being $N$ the total number of time steps, and the complexity can be reduced from $O(N^2)$ to $O(N\log(N))$. For BDF$(p)$ quadratures, the order of convergence of our method remains globally optimal, having the same form as in linear subdiffusion equations, that is, equal to $\alpha$. For $\alpha\geq 1/2$ we are also able to prove pointwise optimal estimates. That is, for the BDF$(p)$ quadrature the error in time behaves as $t_n^{\alpha-1} h$, where $h$ is the time step (for the Euler scheme we also observe a slowly increasing logarithmic factor). We also prove that for the semilinear equation our estimates are optimal both globally and locally.  Naturally, away from $t=0$ the method exhibits the first order of convergence which deteriorates to order $\alpha$ when $t\rightarrow 0^+$. This behavior is typical for linear equations \cite{cuesta2006convolution,jin2016two}, also for formulas of higher order \cite{JinLiZhou2017}, and it is visible in the L1 method discretization \cite{kopteva2019error}, too. We are able to extend this result to quasilinear equations with minimal regularity assumptions on the solutions.  

The paper is organized as follows. In Section 2 we prove a coercivity result for the CQ discretization of the fractional derivative and a suitable version of Gr\"onwall's lemma for the analysis of \eqref{eqn:MainPDE}. In Section 3 we present our numerical scheme and provide complete error estimates in both time and space. Section 4 describes the implementation in time of our method and the application of the fast and oblivious algorithm from \cite{BanLo19} in this setting. The numerical results confirming our theoretical results are shown in Section 5, together with some comparisons of complexity requirements with other schemes in the recent literature.

\section{Properties of convolution quadratures for the Caputo derivative}
We will start by discussing some properties of CQ that will be useful for the energy method. Fix the uniform time mesh $0\leq t_n := n h \leq T$, with step $h>0$, and a function $y(t)$. Provided that the initial condition vanishes, that is $y(0) = 0$, the CQ approximation of the Caputo derivative \eqref{eqn:Caputo} is given by
\begin{equation}
	\label{eqn:CQCaputo}
	\partial_h^\alpha y^n = \sum_{j=0}^n w_{n-j}y(t_j) \mbox{ with the CQ weights given by } \sum_{j=0}^\infty w_{j} \zeta^j = \left(\frac{\delta(\zeta)}{h}\right)^{\alpha},
\end{equation}
with $\delta(\zeta)$ the symbol of the underlying ODE solver. For the implicit Euler-based CQ it is $\delta(\zeta)=1-\zeta$, while for the CQ based on the BDF2 method it is $\delta(\zeta)=1-\zeta + \frac 12 (1-\zeta)^2$. CQ based on high--order Runge--Kutta methods are also available \cite{LuOs}, but will not be considered in the present work. The notation in \eqref{eqn:CQCaputo} for the CQ discretization extends in a straightforward way to vectors $v=(v^j)_{j=1}^N$, so that  we will also denote $\partial_h^{\alpha}v$ the vector with components given by
\begin{equation}\label{CQvec}
	\partial_h^{\alpha}v^n = \sum_{j=0}^n w_{n-j}v^j, \quad n=0,\dots,N.
\end{equation}
We can also define the corresponding quadrature for the fractional integral \eqref{eqn:FracInt} with the same symbol as in \eqref{eqn:CQCaputo}, that is
\begin{equation}\label{eqn:CQFracInt}
	I^\alpha y(t_n) \approx \sum_{j=0}^n b_{n-j}y(t_j), \quad \sum_{j=0}^\infty b_{j} \zeta^j = \left(\frac{\delta(\zeta)}{h}\right)^{-\alpha}.
\end{equation}
In what follows we will always assume that the weights have the following signs
\begin{equation}\tag{A}
	\label{eqn:CQCaputoWeights}
	w_{0} := h^{-\alpha} \delta(0)^{\alpha} > 0, \quad w_{j} < 0 \quad \text{for} \quad j\geq 1.
\end{equation}
This assumption is motivated by our subsequent results in this section. The simplest example of the above condition is the Backward Euler scheme for which we have $\delta(\zeta) = 1-\zeta$. From the binomial series we have the weights
\begin{equation}
	\begin{split}
		w_0 &= h^{-\alpha} > 0, \\
		w_j &= (-1)^j\binom{\alpha}{j} h^{-\alpha}= (-1)^j \frac{\alpha(\alpha-1)...(\alpha-j+1)}{j!} = - \frac{\alpha(1-\alpha)...(j-1-\alpha)}{j!} h^{-\alpha}< 0,
	\end{split}
\end{equation}
where in the last equality we have taken the minus sign from all $j$ factors canceling the $(-1)^j$ term. The general sign follows because all parentheses are positive for $\alpha \in (0,1)$. For the BDF2 weights can also be written explicitly.
\begin{prop}
	Let $w_j$ given by \eqref{eqn:CQCaputo} be the weights associated with the BDF2 formula. Then,
	\begin{equation}
		w_j = (-1)^j 2^{-\alpha} 3^{\alpha-j} \binom{\alpha}{j}\,_2 F_1\left(-j,-\alpha;1-j+\alpha;3\right) h^{-\alpha},
	\end{equation}
	where the hypergeometric function is defined by
	\begin{equation}
		_2 F_1(a,b,c; z) = \sum_{k=0}^\infty \frac{(a)_k (b)_k}{(c)_k} \frac{z^k}{k!},
	\end{equation}
	with the Pochhammer symbol $(a)_k = a(a+1)...(a+k-1)$. 
\end{prop}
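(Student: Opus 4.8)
The plan is to evaluate the generating function $(\delta(\zeta)/h)^\alpha$ in closed form and simply read off its Taylor coefficients. First I would factor the BDF2 symbol: writing $u=1-\zeta$ gives $\delta(\zeta)=u+\tfrac12u^2=\tfrac12u(2+u)=\tfrac12(1-\zeta)(3-\zeta)$, so that
\begin{equation*}
\left(\frac{\delta(\zeta)}{h}\right)^\alpha = h^{-\alpha}\,2^{-\alpha}(1-\zeta)^\alpha(3-\zeta)^\alpha = h^{-\alpha}\,2^{-\alpha}3^\alpha(1-\zeta)^\alpha\left(1-\tfrac{\zeta}{3}\right)^\alpha.
\end{equation*}
Both remaining factors are now in the standard form to which the binomial series $(1-z)^\alpha=\sum_k(-1)^k\binom{\alpha}{k}z^k$ applies.

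Next I would expand each factor and take the Cauchy product. Since $[\zeta^k](1-\zeta)^\alpha=(-1)^k\binom{\alpha}{k}$ and $[\zeta^m](1-\zeta/3)^\alpha=(-1)^m\binom{\alpha}{m}3^{-m}$, collecting the coefficient of $\zeta^j$ yields
\begin{equation*}
w_j = h^{-\alpha}\,2^{-\alpha}3^\alpha(-1)^j\sum_{m=0}^{j}\binom{\alpha}{m}\binom{\alpha}{j-m}3^{-m}.
\end{equation*}
The summand is symmetric under $m\mapsto j-m$, so replacing $m$ by $j-m$ sends $3^{-m}$ to $3^{-(j-m)}$ and factors out a $3^{-j}$, giving $\sum_m\binom{\alpha}{m}\binom{\alpha}{j-m}3^{-m}=3^{-j}\sum_m\binom{\alpha}{m}\binom{\alpha}{j-m}3^{m}$. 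This reflection is precisely what produces the argument $3$ (rather than $1/3$) inside the final hypergeometric and the prefactor $3^{\alpha-j}$.

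It then remains to recognize the convolution $\sum_{m=0}^j\binom{\alpha}{m}\binom{\alpha}{j-m}3^{m}$ as a terminating $_2F_1$. I would rewrite the binomials through Pochhammer symbols: using $\binom{\alpha}{m}=(-1)^m(-\alpha)_m/m!$ together with the ratio identity $\binom{\alpha}{j-m}/\binom{\alpha}{j}=(-1)^m(-j)_m/(1-j+\alpha)_m$ (obtained by cancelling the common factors of the two falling factorials), the generic term becomes $\binom{\alpha}{j}\,(-\alpha)_m(-j)_m\,3^m/[(1-j+\alpha)_m\,m!]$, which is exactly the $m$-th term of $_2F_1(-j,-\alpha;1-j+\alpha;3)$. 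Substituting back into the expression for $w_j$ gives the claimed closed form.

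The routine parts are the binomial expansions and the Cauchy product, and the reflection $m\mapsto j-m$ is immediate from the symmetry of the summand. I expect the only delicate point to be the Pochhammer bookkeeping in the last step --- verifying that the lower parameter emerges as precisely $c=1-j+\alpha$ and that the two sign factors $(-1)^m$ cancel. As a consistency check one may record the identity $(-\alpha)_j=(-1)^j(1-j+\alpha)_j$, which also shows directly that $_2F_1(-j,-\alpha;1-j+\alpha;3)=3^j\,{}_2F_1(-j,-\alpha;1-j+\alpha;1/3)$, reconciling the stated formula with the coefficient one would obtain by a naive expansion in powers of $\zeta/3$.
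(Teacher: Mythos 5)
Your proof is correct and follows essentially the same route as the paper's: factor the BDF2 symbol as $\tfrac12(1-\zeta)(3-\zeta)$, expand both factors by the binomial series, take the Cauchy product, and convert the resulting convolution sum into a terminating $\,_2F_1$ via Pochhammer identities. In fact, your write-up is more careful than the paper's on two points. First, you retain the factor $(-1)^j$ arising from the product of the signs $(-1)^k(-1)^{j-k}$ in the two binomial expansions; the paper's displayed Cauchy product omits it (evidently a typo, since the factor does appear in the final statement). Second, your explicit reflection $m\mapsto j-m$, turning $\sum_m\binom{\alpha}{m}\binom{\alpha}{j-m}3^{-m}$ into $3^{-j}\sum_m\binom{\alpha}{m}\binom{\alpha}{j-m}3^{m}$, is precisely the step needed to produce the argument $3$ and the prefactor $3^{\alpha-j}$ of the stated formula; the paper's manipulations, read literally, terminate with $\,_2F_1(-j,-\alpha;1-j+\alpha;1/3)$ and prefactor $3^{\alpha}$, and the equivalence of the two forms --- your closing identity $\,_2F_1(-j,-\alpha;1-j+\alpha;3)=3^{j}\,_2F_1(-j,-\alpha;1-j+\alpha;1/3)$ --- is never addressed there. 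So your argument not only reproduces the result but also closes a small gap (and fixes a sign slip) in the published proof.
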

\begin{proof}
	The symbol for BDF2 can be factored as $\delta(\zeta) = 1-\zeta + (1-\zeta)^2/2 = 0.5(1-\zeta)(3-\zeta)$. Therefore, by the definition of the weights \eqref{eqn:CQCaputo} we have
	\begin{equation}
		\sum_{j=0}^\infty w_j \zeta^j = \left(\frac{\delta(\zeta)}{h}\right)^{\alpha} = 2^{-\alpha} 3^\alpha h^{-\alpha} \sum_{j=0}^\infty \left(\sum_{k=0}^j \binom{\alpha}{k} \binom{\alpha}{j-k}3^{-k}\right) \zeta^j,
	\end{equation} 
	where we have used the Cauchy product formula for Taylor series for $(1-\zeta)^\alpha$ and $(1-\zeta/3)^\alpha$. Therefore, it is sufficient to evaluate the inner sum. First, notice that
	\begin{equation}
		k! \binom{\alpha}{k} = \alpha(\alpha-1)...(\alpha-k+1) = (-1)^k (-\alpha)(-\alpha+1)...(-\alpha+k-1) = (-1)^k (-\alpha)_k. 
	\end{equation}
	Therefore, by the definition of the binomial coefficient,
	\begin{equation}
		\begin{split}
			w_j &= 2^{-\alpha} 3^\alpha h^{-\alpha} \sum_{k=0}^j (-1)^k (-\alpha)_k \binom{\alpha}{j-k}\frac{3^{-k}}{k!} \\
			&= 2^{-\alpha} 3^\alpha h^{-\alpha} \binom{\alpha}{j} \sum_{k=0}^j (-1)^k (-\alpha)_k \frac{j!}{(j-k)!}\frac{\Gamma(\alpha+1-j)}{\Gamma(\alpha+1-j+k)}\frac{3^{-k}}{k!}.
		\end{split}
	\end{equation}
	The above is precisely the definition of the hypergeometric function and this can be seen by noticing that
	\begin{equation}
		\frac{j!}{(j-k)!} = j(j-1)...(j-k+1) = (-1)^k (-j)_k,
	\end{equation}
	and 
	\begin{equation}
		\frac{\Gamma(\alpha+1-j)}{\Gamma(\alpha+1-j+k)} = \frac{1}{(\alpha+1-j)...(\alpha-j+k)} = \frac{1}{(\alpha+1-j)_k}.
	\end{equation}
	The proof is complete since due to the factor $(-j)_k$, the series terminates after $k=j$. 
\end{proof}

It is not straightforward to prove that the BDF2 weights satisfy the condition \eqref{eqn:CQCaputoWeights}, however, we can easily see that
\begin{equation}\label{eqn:BDF2Weights}
	w_1 = -2^{2-\alpha} 3^{\alpha-1} \alpha h^{-\alpha}, w_2 = -2^{-\alpha} 3^{-2+\alpha} (5-8\alpha) h^{-\alpha}, \quad w_3 = -2^{2-\alpha} 3^{4-\alpha} \alpha(1-\alpha) (7-8\alpha) h^{-\alpha},
\end{equation}
and hence, the first weight is negative, the second one only for $0<\alpha<5/8=0.625$, while the third one for $0<\alpha<7/8=0.875$. We have numerically checked the sign of a number of subsequent weights and confirmed that all of them satisfy our assumption \eqref{eqn:CQCaputoWeights}. That is, all BDF2 weights are admissible for $0<\alpha<5/8$. This can also be visualized numerically. In Fig. \ref{fig:BDF2} we have plotted the respective weights $w_j$ for all $0<\alpha<1$. Computations confirm that $|w_j|$ are also $j-$decreasing, as can be seen in Fig. \ref{fig:BDF2j}. It can be seen that the numerical calculations confirm our hypothesis. 

\begin{figure}
	\centering
	\includegraphics[scale = 1.2]{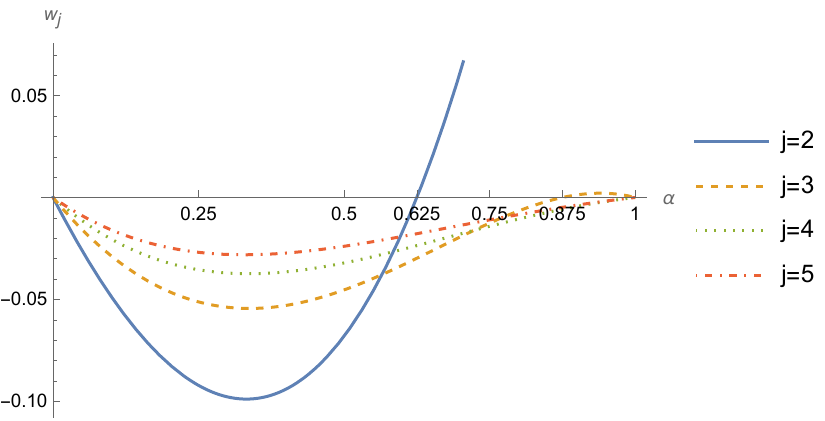}
	\caption{Plot of the BDF(2) weights $w_j$ as in \eqref{eqn:BDF2Weights} for different $0<\alpha<1$ with $h=1$. Weight $w_2$ is negative for $0<\alpha<5/8$, weight $w_3$ is negative for $0<\alpha<7/8$, while all other weights are always negative.  }
	\label{fig:BDF2}
\end{figure}

\begin{figure}
	\centering
	\includegraphics[scale = 1.2]{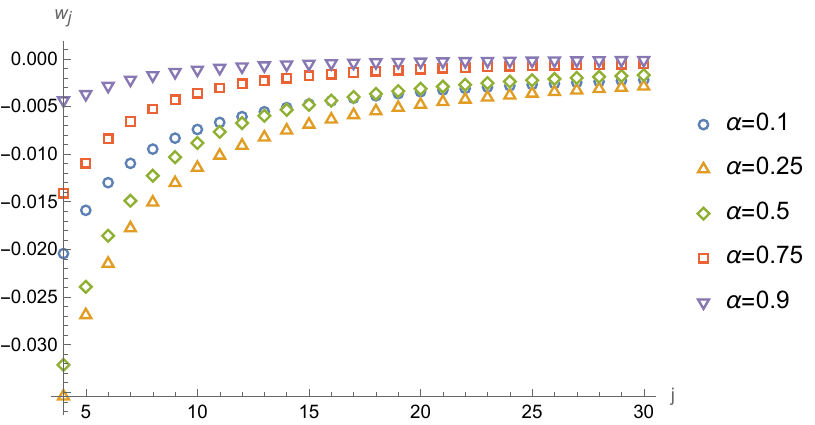}
	\caption{Plot of the BDF(2) weights $w_j$ as in \eqref{eqn:BDF2Weights} for different $j\geq 4$ with $h=1$. }
	\label{fig:BDF2j}
\end{figure}

\begin{rem}
	We note that obtaining exact explicit formulas for the weights for the general CQ quadratures can be impossible, but only proving that they have a sign satisfying \eqref{eqn:CQCaputoWeights}. However, according to the general theory of CQ we have the following (see \cite{lubich2004convolution}, Theorem 2.1)
	\begin{equation}
		h^{-1}w_j = \frac{1}{\Gamma(1-\alpha)}\frac{d}{dt} t^{-\alpha}|_{t=t_j} + O(h^p) = -\frac{\alpha}{\Gamma(1-\alpha)} t_j^{-\alpha-1} + O(h^p),
	\end{equation}
	for $p\geq 1$ depending on the order of the CQ formula. From the above we can see that for $j$ sufficiently large and $h$ small, the sign of the weights is the same as the sign of $-\alpha/\Gamma(1-\alpha)$, that is, negative. Note that this conclusion is not necessarily valid for small $j$ which is precisely the case for BDF2 weights. 
\end{rem}

Going back to the general case, by the very construction of the approximation to $\partial_t^\alpha$ we have the consistency condition
\begin{equation}
	\label{eqn:CQCaputoConsistency}
	\sum_{j=0}^\infty w_j = 0 \quad n\geq 1,
\end{equation}
which follows by putting $\zeta=1$ into (\ref{eqn:CQCaputo}) or by requiring that any CQ scheme for the Caputo derivative is exact for constant functions (and hence, identically equals zero). From this it follows that for any $n\geq 1$ we have
\begin{equation}
	\label{eqn:CQCaputoConsistencyN}
	\sum_{j=0}^n w_j = \sum_{j=0}^{\infty} w_j - \sum_{j=n+1}^\infty w_j \geq 0,
\end{equation}
by the assumption \eqref{eqn:CQCaputoWeights} that the weights $w_j$ are negative for $j\ge 1$. We also have the truncation error
\begin{equation}
	\label{eqn:CQCaputoTruncation}
	\partial_t^\alpha u(x, t_n) = \partial_h^\alpha u(x, t_n) + \zeta_n(h).
\end{equation}
The term $\zeta_n(h)$ can be estimated with the help of \cite{lubich2004convolution}, Theorem 2.2, see also \cite[Theorem 5.1-5.2]{Lu88I}. Under the assumption that the solution has the typical regularity near the origin, that is $\bar{u}(t):=u(x,t)$,  with $\bar{u}(t)=t^{\alpha}g(t)$, $g\in C^{p+1}\left([0,T]; H_0^1(\Omega) \cap H^2(\Omega) \right)$ (see our assumption \eqref{eqn:RegularityAssumtion}) we have 
\begin{equation}\label{eqn:CQCaputoTruncationEstimate}
	\|\zeta_n(h)\| \leq C \begin{cases}
		t_n^{-1} h, & p = 1, \\
		t_n^{-\alpha-1} h^{\alpha+1}, & p \geq 2. 
	\end{cases}
\end{equation}
This clearly states how the error deteriorates near the origin due to the lack of smoothness of the solution. 

We will now prove two auxiliary results that are discrete generalizations of known continuous inequalities for the Caputo derivative. They will be used later in the following section, but they are also interesting on their own. First, it is clear that the first ordinary derivative satisfies $\frac{1}{2}\frac{d}{dt}y(t)^2 = y(t) \frac{dy}{dt}$. For the continuous Caputo derivative, it becomes an inequality $\frac{1}{2}D^\alpha y(t)^2 \leq y(t) D^\alpha y(t)$ which is very useful when applied in the energy method (see \cite{alikhanov2010priori,Ver15,Kop22} for recent proofs in the case where $y=y(t)\in L^2$ is a time-dependent mapping from the Hilbert space). As the following proposition states, this inequality is still valid for the convolution quadrature constructed above (the L1 scheme version has recently been proved in \cite{li2018analysis}, while the case of Euler CQ in \cite{jin2021numerical}). 

\begin{prop}\label{prop:CQInner}
	Let the weights $w_j$ for the convolution quadrature (\ref{eqn:CQCaputo}) satisfy (\ref{eqn:CQCaputoWeights}). Then, for any sequence of functions $(y^n)_n \subseteq L^2$ with $y(0) = 0$ we have 
	\begin{equation}
		\frac{1}{2}\partial_h^\alpha \|y^n\|^2 \leq \|y^n\| (\partial_h^\alpha \|y^n\|) \leq (\partial_h^\alpha y^n, y^n).
	\end{equation}
\end{prop}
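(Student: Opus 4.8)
The plan is to prove the two inequalities separately, treating both as direct consequences of the sign structure of the weights in \eqref{eqn:CQCaputoWeights} together with the partial-sum bound \eqref{eqn:CQCaputoConsistencyN}. Throughout I would abbreviate $a_j := \|y^j\|$, so that $\partial_h^\alpha\|y^n\| = \sum_{j=0}^n w_{n-j}a_j$ and $\partial_h^\alpha\|y^n\|^2 = \sum_{j=0}^n w_{n-j}a_j^2$, and I would use that $a_0 = 0$ because $y(0)=0$. The strategy in each case is to form the difference of the two sides, expand it as a single weighted sum over $j$, and then argue termwise using that the weight $w_{n-j}$ is strictly negative whenever $n-j\ge 1$.

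For the right-hand inequality I would expand both inner products and form
\[
(\partial_h^\alpha y^n, y^n) - \|y^n\|\,(\partial_h^\alpha\|y^n\|) = \sum_{j=0}^n w_{n-j}\bigl[(y^j,y^n) - \|y^j\|\,\|y^n\|\bigr].
\]
The bracket is nonpositive by Cauchy--Schwarz and vanishes identically for $j=n$. For every remaining index $j<n$ the weight $w_{n-j}$ is strictly negative (since $n-j\ge 1$), so each summand is a product of a negative factor and a nonpositive factor, hence nonnegative. Summing yields the claim.

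For the left-hand inequality I would exploit the pointwise algebraic identity $a_na_j - \tfrac12 a_j^2 = \tfrac12 a_n^2 - \tfrac12 (a_n - a_j)^2$. Writing
\[
\|y^n\|\,(\partial_h^\alpha\|y^n\|) - \tfrac12\,\partial_h^\alpha\|y^n\|^2 = \sum_{j=0}^n w_{n-j}\Bigl(a_na_j - \tfrac12 a_j^2\Bigr)
\]
and substituting the identity, the sum splits into $\tfrac12 a_n^2\sum_{j=0}^n w_{n-j}$ minus $\tfrac12\sum_{j=0}^n w_{n-j}(a_n-a_j)^2$. The first term is nonnegative because \eqref{eqn:CQCaputoConsistencyN} guarantees $\sum_{k=0}^n w_k\ge 0$. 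In the second term the $j=n$ contribution vanishes, and for $j<n$ we have $w_{n-j}<0$ while $(a_n-a_j)^2\ge0$, so $-\tfrac12 w_{n-j}(a_n-a_j)^2\ge0$; hence the whole expression is nonnegative.

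I do not expect a single hard analytic step here, since the argument is essentially bookkeeping once the sign pattern of the weights is used. The only point requiring care is ensuring that the diagonal term $j=n$, where the positive weight $w_0$ has the ``wrong'' sign, does not spoil the termwise positivity. In both inequalities this is precisely the term that cancels, because $(y^n,y^n)=\|y^n\|^2$ in the first case and $(a_n-a_n)^2=0$ in the second, so $w_0>0$ never multiplies a quantity of indefinite sign. Keeping the roles of the positive weight $w_0$ and the negative tail $w_j$ ($j\ge1$) cleanly separated, and invoking \eqref{eqn:CQCaputoConsistencyN} for the residual positive term, is the main thing to get right.
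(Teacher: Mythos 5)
Your proof is correct and follows essentially the same route as the paper: the right-hand inequality is termwise Cauchy--Schwarz against the negative weights $w_{n-j}$ ($j<n$), and the left-hand inequality combines the nonnegativity of the partial sums \eqref{eqn:CQCaputoConsistencyN} with the fact that $-w_{n-j}(a_n-a_j)^2\ge 0$, which is exactly the paper's use of the Cauchy inequality $ab\le\tfrac12(a^2+b^2)$ written as a completed square. The only difference is presentational (you bound differences of the two sides rather than chaining inequalities), so there is nothing substantive to add.
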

\begin{proof}
	Using the definition of weights (\ref{eqn:CQCaputo}) we can write 
	\begin{equation}
		(\partial_h^\alpha y^n, y^n) = w_{0} \|y^n\|^2 + \sum_{j=0}^{n-1} w_{n-j} (y^j, y^n).
	\end{equation}
	Since by assumption \eqref{eqn:CQCaputoWeights} the negativity of $w_{j}$ for $j\geq 1$ holds, we can use the Cauchy-Schwarz inequality to obtain
	\begin{equation}
		(\partial_h^\alpha y^n, y^n) \geq w_{0} \|y^n\|^2 + \|y^n\|\sum_{j=0}^{n-1} w_{n-j} \|y^j\| = \|y^n\|(\partial_h^\alpha \|y^n\|),
	\end{equation}
	which is the first inequality that we had to prove. Furthermore, by the Cauchy inequality $ab \leq (a^2+b^2)/2$ we can estimate each product with $\|y^n\|$ by the sum of squares
	\begin{equation}
		\|y^n\|(\partial_h^\alpha \|y^n\|) \geq \frac{1}{2}\left(w_{0} + \sum_{j=0}^{n-1} w_{n-j} \right) \|y^n\|^2 + \frac{1}{2}\left(\|y^n\|^2+\sum_{j=0}^{n-1} w_{n-j} \|y^j\|^2\right),
	\end{equation}
	where we have distributed $w_0$ into two half for each parenthesis. By gathering terms, we obtain
	\begin{equation}
		\|y^n\|(\partial_h^\alpha \|y^n\|) \geq \frac{1}{2}\|y^n\|^2\sum_{j=0}^{n} w_j  + \frac{1}{2}\sum_{j=0}^{n} w_{n-j} \|y^j\|^2.
	\end{equation}
	But, according to the consistency (\ref{eqn:CQCaputoConsistencyN}) the first term is positive leading us to the assertion. 
\end{proof}

The next result concerns the discrete Gr\"onwall inequality for the Caputo derivative. Its version for the L1 discretization has been proved, for example, in \cite{li2018analysis,Lia18}. The proof is based on two steps: first, we invert the derivative and then use the following classical lemma, which is a generalization of the discrete integral Gr\"onwall inequality (historically, it was not associated with the name "fractional integral" but rather "integral equations with weakly-singular kernel"). 
\begin{prop}[Discrete fractional Gr\"onwall inequality (integral version) (\cite{dixon1985order}, Theorem 2.1)]
	\label{prop:GronwallInt}
	Let $(y^n)_n$ be a positive sequence satisfying
	\begin{equation}
		y^n \leq M_0 t_n^{\alpha-1} + M_1 h^{\alpha}\sum_{k=0}^{n-1} (n-k)^{\alpha-1} y^k + M_2,
	\end{equation}
	for some positive constants $M_{1,2,3}$, which may depend on $h$. Then, for $0<\alpha<1$ we have
	\begin{equation}
		y^n \leq M_0\Gamma(\alpha) t_n^{\alpha-1} E_{\alpha,\alpha} (M_1 \Gamma(\alpha) t_n^\alpha) + M_2 E_\alpha(M_1 \Gamma(\alpha) t_n^\alpha),
	\end{equation}
	where the Mittag-Leffler function is defined by
	\begin{equation}
		E_{\alpha,\beta}(z) = \sum_{k=0}^\infty \frac{z^k}{\Gamma(\alpha k + \beta)}, \quad E_\alpha := E_{\alpha,1}.  
	\end{equation}
\end{prop}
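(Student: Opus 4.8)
The plan is to treat the hypothesis as a discrete Picard iteration for the fractional-integral operator and to resum the iterates into the Mittag--Leffler series. Writing $t_n = nh$, the assumption becomes
\[
y^n \le g^n + (Ky)^n, \qquad g^n := g_0^n + g_1^n, \quad g_0^n := M_0 t_n^{\alpha-1}, \quad g_1^n := M_2, \quad (Kz)^n := M_1 h\sum_{k=0}^{n-1}(t_n-t_k)^{\alpha-1}z^k,
\]
so that $K$ is $M_1$ times the left--endpoint Riemann sum approximating the kernel $(t_n - s)^{\alpha-1}$ of the Riemann--Liouville integral (up to the usual $1/\Gamma(\alpha)$). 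The operator $K$ preserves positivity and is strictly lower triangular, so that $(K^m z)^n$ depends only on the entries $z^0,\dots,z^{n-m}$ and vanishes for $m>n$. Iterating $y\le g + Ky$ thus terminates on any finite grid and gives, at least for the regular part, the majorization $y^n \le \sum_{m\ge 0}(K^m g)^n$, reducing the whole problem to bounding the iterated forcing terms $K^m g$.

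The engine of the argument is a \emph{discrete Beta inequality}: I would prove, uniformly in $n$ and $h$, an estimate of the form
\[
h\sum_{k=1}^{n-1}(t_n-t_k)^{\alpha-1} t_k^{\gamma-1} \le \frac{\Gamma(\alpha)\Gamma(\gamma)}{\Gamma(\alpha+\gamma)}\,t_n^{\alpha+\gamma-1}, \qquad \gamma>0,
\]
i.e.\ the discrete convolution of the kernel against a power $t^{\gamma-1}$ reproduces, up to a controlled constant, the exact Beta integral $\int_0^{t_n}(t_n-s)^{\alpha-1}s^{\gamma-1}\,ds = B(\alpha,\gamma)\,t_n^{\alpha+\gamma-1}$. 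Feeding this into an induction on $m$ --- using that the power exponent advances by $\alpha$ at each step and that the $\Gamma$-factors telescope through $B(\alpha,\gamma)=\Gamma(\alpha)\Gamma(\gamma)/\Gamma(\alpha+\gamma)$ --- yields the two bounds
\[
(K^m g_1)^n \le M_2\,(M_1\Gamma(\alpha))^m\,\frac{t_n^{m\alpha}}{\Gamma(m\alpha+1)}, \qquad (K^m g_0)^n \le M_0\,\Gamma(\alpha)\,(M_1\Gamma(\alpha))^m\,\frac{t_n^{(m+1)\alpha-1}}{\Gamma((m+1)\alpha)},
\]
corresponding to the starting exponents $\gamma=1$ and $\gamma=\alpha$ respectively. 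Summing these over $m\ge 0$ and recalling $E_{\alpha,\beta}(z)=\sum_{m\ge0} z^m/\Gamma(\alpha m+\beta)$ then produces
\[
\sum_{m\ge0}(K^m g)^n \le M_0\Gamma(\alpha) t_n^{\alpha-1} E_{\alpha,\alpha}\!\big(M_1\Gamma(\alpha) t_n^\alpha\big) + M_2\, E_{\alpha}\!\big(M_1\Gamma(\alpha) t_n^\alpha\big),
\]
which is exactly the claimed bound; the singular forcing $M_0 t_n^{\alpha-1}$ (with $\gamma=\alpha$) picks up $E_{\alpha,\alpha}$, while the constant forcing $M_2$ (with $\gamma=1$) picks up $E_{\alpha}=E_{\alpha,1}$.

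The main obstacle is establishing the discrete Beta inequality uniformly in $(n,h)$ at the weakly singular endpoints. The integrand $(t_n-s)^{\alpha-1}s^{\gamma-1}$ blows up both at $s=0$ (when $\gamma<1$, as for $\gamma=\alpha$) and at $s=t_n$, so a crude left--rectangle comparison fails there; in particular the $k=0$ term $h\,t_n^{\alpha-1}t_0^{\alpha-1}$ is ill defined. I would treat this by splitting the sum into an initial block near $k=0$, a terminal block near $k=n$, and a bulk where monotonicity of each factor gives a clean integral comparison, and by controlling the two endpoint contributions directly against the \emph{finite} Beta integral --- which is legitimate in the application because $y^0$ is separately known (indeed $y^0=0$) and the singular forcing is understood only for $n\ge1$. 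This uniform estimate with integrable singular endpoints is the delicate technical core of the result, and is precisely what is established in \cite{dixon1985order}, Theorem 2.1.
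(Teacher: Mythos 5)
First, a point of reference: the paper offers \emph{no proof} of this proposition at all --- it is quoted as Theorem 2.1 of \cite{dixon1985order} and used as a black box in the proof of Lemma \ref{lem:Gronwall}. So the real comparison is with Dixon's argument, and your skeleton (Picard/Neumann iteration of the strictly lower triangular, positivity-preserving operator $K$, followed by resummation into Mittag--Leffler series) is indeed the standard route for such results. Your bookkeeping is also exactly right: with starting exponents $\gamma=\alpha$ and $\gamma=1$, the Beta-function telescoping $B(\alpha,\gamma)=\Gamma(\alpha)\Gamma(\gamma)/\Gamma(\alpha+\gamma)$ produces precisely $M_0\Gamma(\alpha)t_n^{\alpha-1}E_{\alpha,\alpha}(M_1\Gamma(\alpha)t_n^{\alpha})$ and $M_2E_{\alpha}(M_1\Gamma(\alpha)t_n^{\alpha})$.

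There are, however, two genuine gaps. The first is that the entire proof rests on the ``discrete Beta inequality,'' which you assert rather than prove, and the citation you give for it is circular: \cite{dixon1985order}, Theorem 2.1, \emph{is} the statement being proven, not the auxiliary kernel estimate. Worse, the proof you sketch for it (endpoint blocks plus a bulk comparison) would naturally deliver a bound of the form $C\,B(\alpha,\gamma)t_n^{\alpha+\gamma-1}$ with some $C>1$, and any $C>1$ destroys the exact telescoping: the conclusion would then read $E_{\alpha,\alpha}(CM_1\Gamma(\alpha)t_n^{\alpha})$, not the constants claimed. To get $C=1$ one can argue instead: for $\gamma\geq 1$ the integrand $f(s)=(t_n-s)^{\alpha-1}s^{\gamma-1}$ is increasing on $(0,t_n)$, so the left-endpoint sum lies below $\int_0^{t_n}f$; for $0<\gamma<1$ it is log-convex (hence convex), so $hf(t_k)\leq\int_{t_k-h/2}^{t_k+h/2}f$ and summing over $k=1,\dots,n-1$ again gives at most $\int_0^{t_n}f=B(\alpha,\gamma)t_n^{\alpha+\gamma-1}$. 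The second gap is the treatment of the index $k=0$. You correctly flag that $g_0^0=M_0t_0^{\alpha-1}$ is ill defined, but resolving it ``because in the application $y^0=0$'' imports information not contained in the statement. In fact, as literally written (sum from $k=0$, no hypothesis at $n=0$) the proposition is \emph{false}: take $y^0$ arbitrarily large and define $y^1$ to equal the right-hand side of the hypothesis; the claimed bound on $y^1$ is independent of $y^0$ and is violated. A clean proof must therefore either assume $y^0=0$ (which is how the paper invokes the result in Lemma \ref{lem:Gronwall}) or run the iteration with sums starting at $k=1$, substituting the hypothesis only at indices $k\geq 1$ and using that $(K^m g_0)^0=0$ for $m\geq 1$, so that the singular powers $t_k^{m\alpha-1}$ are never evaluated at $k=0$. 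With those two repairs your argument closes; as written, the delicate core is missing and its stated source is the theorem itself.
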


Now, we will state the first step in proving the needed version of the Gr\"onwall lemma. It states that the inequality involving a discrete Caputo derivative for CQ can be ,,integrated''. Some similar reasoning for applying the mathematical induction for the Caputo derivative has been used in \cite{jin2017analysis} for the L1 scheme. The lemma below follows directly from the preservation of the composition rule by all CQ formulas, without any assumption about the signs of the CQ weights.

\begin{lem}\label{lem:CaputoIntegrated}
	Let $(y^j)_{j=0}^\infty$ and $(G^j)_{j=0}^\infty$ be positive sequences. Assume that $y^0 = 0$ and the discrete Caputo derivative $\partial_h^\alpha$ is constructed as the convolution quadrature (\ref{eqn:CQCaputo}). Then, the inequality
	\begin{equation}
		\label{eqn:GronwallAssumption}
		\partial_h^\alpha y^n \leq G^n,
	\end{equation}
	implies that
	\begin{equation}\label{eqn:GronwallInduction}
		y^n \leq \sum_{j=1}^n b_{n-j} G^j,
	\end{equation}
	where $b_j$ is defined as the CQ weight for the fractional integral \eqref{eqn:CQFracInt}.
\end{lem}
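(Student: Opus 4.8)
The plan is to recognize that the quadrature weights $b_j$ for the fractional integral \eqref{eqn:CQFracInt} are exactly the convolution inverse of the weights $w_j$ for the fractional derivative \eqref{eqn:CQCaputo}, so that convolving the hypothesis \eqref{eqn:GronwallAssumption} against $(b_j)$ simply ``integrates'' the discrete Caputo derivative and recovers $y^n$. Concretely, since the two generating functions multiply to one,
\begin{equation}
	\left(\sum_{j=0}^\infty w_j\zeta^j\right)\left(\sum_{j=0}^\infty b_j\zeta^j\right) = \left(\frac{\delta(\zeta)}{h}\right)^{\alpha}\left(\frac{\delta(\zeta)}{h}\right)^{-\alpha} = 1,
\end{equation}
comparing coefficients in the Cauchy product yields the convolution-inverse identity: the sum $\sum_{k=0}^N w_k b_{N-k}$ equals $1$ when $N=0$ and $0$ when $N\ge 1$. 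This is the composition rule, and the key observation is that it is a purely algebraic consequence of the CQ construction requiring no assumption whatsoever on the signs of the $w_j$.

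First I would apply the integral quadrature to the sequence $\partial_h^\alpha y$. Writing $\partial_h^\alpha y^m = \sum_{i=0}^m w_{m-i}y^i$, interchanging the order of summation, and substituting $k=m-i$ gives
\begin{equation}
	\sum_{m=0}^n b_{n-m}\,\partial_h^\alpha y^m = \sum_{i=0}^n y^i \sum_{k=0}^{n-i} b_{n-i-k}\, w_k = y^n,
\end{equation}
where the inner sum collapses by the convolution-inverse identity, being $1$ for $i=n$ and vanishing otherwise. Thus the discrete integral inverts the discrete derivative \emph{exactly}. Moreover the $m=0$ contribution on the left is $b_n\,\partial_h^\alpha y^0 = b_n w_0 y^0 = 0$ because $y^0=0$, so the identity may be written as $y^n = \sum_{j=1}^n b_{n-j}\,\partial_h^\alpha y^j$, which already pins down the lower summation index appearing in \eqref{eqn:GronwallInduction}.

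It then remains to pass from this exact identity to the claimed inequality. Inserting the hypothesis $\partial_h^\alpha y^j\le G^j$ and keeping the inequality term by term gives the assertion, \emph{provided} the integral weights are nonnegative, $b_j\ge 0$. This positivity is the only ingredient beyond algebra, and it holds for the formulas at hand: for the Euler scheme $b_j = h^\alpha (\alpha)_j/j!>0$, while for BDF$2$ the factorization $\delta(\zeta)=\tfrac12(1-\zeta)(3-\zeta)$ exhibits $(\delta(\zeta)/h)^{-\alpha}$ as the product of the two power series $(1-\zeta)^{-\alpha}$ and $(1-\zeta/3)^{-\alpha}$, each with nonnegative coefficients, whence $b_j\ge 0$. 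I expect the only genuinely delicate point to be precisely this last step: the hypothesis controls $\partial_h^\alpha y^j$ only from above, and the exact inversion formula converts that one-sided bound into a one-sided bound on $y^n$ exactly because $b_{n-j}\ge0$. With the composition identity (insensitive to the signs of the $w_j$) and the positivity of the $b_j$ in hand, the conclusion \eqref{eqn:GronwallInduction} follows at once.
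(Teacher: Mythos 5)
Your proof is correct, but it takes a genuinely different route from the paper's. The paper argues by induction on $n$: writing $y^n \le w_0^{-1}\bigl(-\sum_{j=1}^{n-1} w_{n-j}y^j + G^n\bigr)$, it inserts the inductive bound for each $y^j$ --- a step that is legitimate only because the standing sign assumption \eqref{eqn:CQCaputoWeights} gives $w_{n-j}<0$ --- and then collapses the resulting double sum using the same Cauchy-product identity $\sum_{i=0}^{m} w_i b_{m-i}=\delta_{m0}$ that you invoke. You instead use that identity at the outset to obtain the exact inversion $y^n=\sum_{j=1}^{n} b_{n-j}\,\partial_h^\alpha y^j$ (no induction, no sign conditions), and signs enter only in the final term-by-term passage to the inequality, which needs $b_j\ge 0$. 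These two positivity requirements are not the same, and yours is the weaker one: under \eqref{eqn:CQCaputoWeights} the positivity of the $b_j$ follows by a one-line induction from the convolution-inverse identity, namely $b_0=1/w_0>0$ and $b_N=-w_0^{-1}\sum_{k=1}^{N}w_k\,b_{N-k}\ge 0$; you should add this observation so that your argument covers every quadrature admissible in the paper's sense, rather than only Euler and BDF2, which you verify by hand. In exchange, your route is strictly more general: it applies whenever $b_j\ge 0$ even if \eqref{eqn:CQCaputoWeights} fails, e.g.\ BDF2 with $5/8<\alpha<1$, where $w_2>0$ but your factorization argument still gives $b_j>0$, so the conclusion of the lemma survives where the paper's induction does not run. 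Your last paragraph also makes precise why the paper's prefatory remark that the lemma holds ``without any assumption about the signs of the CQ weights'' is too strong: the exact inversion is indeed sign-free, but converting the one-sided hypothesis into a one-sided conclusion genuinely requires $b_{n-j}\ge 0$, since if some $b_{n-j}<0$ one could lower $\partial_h^\alpha y^j$ slightly below $G^j$ at that index and thereby push $y^n$ above $\sum_{j} b_{n-j}G^j$.
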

\begin{proof}
	The proof proceeds by mathematical induction. For $n=1$ from (\ref{eqn:CQCaputo}) we have that $w_0 > 0$ and
	\begin{equation}
		y^1 \leq \frac{1}{w_{0}} G^1,
	\end{equation}
	but $w_{0} = -w_{1} = h^{-\alpha} \delta(0)^{-\alpha} = b_0^{-1} $ and, hence, $y^1 \leq b_0 G^1$. This proves the initial step. We then assume that (\ref{eqn:GronwallInduction}) holds for $y^j$ with $j=1,...,n-1$. Then, by the definition of the quadrature (\ref{eqn:CQCaputo}) and our assumption (\ref{eqn:GronwallAssumption}) we have
	\begin{equation}
		y^n  \leq \frac{1}{w_{0}}\left(- \sum_{j=1}^{n-1} w_{n-j} y^j + G^n\right).
	\end{equation}
	Since $w_{j} < 0$ for $j \geq 1$, the inductive assumption then gives 
	\begin{equation}
		y^n  \leq \frac{1}{w_{0}}\left(- \sum_{j=1}^{n-1} \sum_{k=1}^j w_{n-j} b_{j-k} G^k + G^n\right),
	\end{equation}
	or, by changing the order of summation,
	\begin{equation}
		y^n  \leq \frac{1}{w_{0}}\left( - \sum_{k=1}^{n-1} \left(\sum_{j=k}^{n-1} w_{n-j} b_{j-k} \right) G^k + G^n\right),
	\end{equation}
	and we can focus on the resulting double sum. If we change the variable to $i = n-j$ we can write
	\begin{equation}
		\sum_{j=k}^{n-1} w_{n-j} b_{j-k} = \sum_{i=1}^{n-k} w_{i} b_{n-k-i}.
	\end{equation}
	The series above can be computed using generating functions. To see this, consider the following Cauchy product of power series
	\begin{equation}
		1 = \left(\frac{\delta(\zeta)}{h}\right)^{\alpha}\left(\frac{\delta(\zeta)}{h}\right)^{-\alpha} = \left(\sum_{m=0}^\infty w_{m}\zeta^m\right)\left(\sum_{m=0}^\infty b_{m} \zeta^m\right) = \sum_{m=0}^\infty \left(\sum_{i=0}^{m} w_{i} b_{m-i}\right) \zeta^m.
	\end{equation}
	Therefore, when $m>0$ the coefficients of the rightmost power series vanish, leading to
	\begin{equation}
		-\sum_{i=1}^{n-k} w_{i} b_{n-k-i} = w_{0}b_{n-k},
	\end{equation}
	hence,
	\begin{equation}
		y^n \leq \sum_{k=1}^{n-1} b_{n-k} G^k + \frac{1}{w_{0}}G^n.
	\end{equation}
	An observation that $w_{0} = b_0^{-1}$ finishes the inductive step, and we have proved (\ref{eqn:GronwallInduction}). 
\end{proof}

We can now proceed to the Gr\"onwall inequality for the discrete Caputo derivative. The idea of utilizing the classical result for the Gr\"onwall inequality for the fractional integral stated in Proposition \ref{prop:GronwallInt} was, to our knowledge, first used in the author's previous paper \cite{del2023numerical}. This approach has the advantage of allowing for the singular behavior of the bound for the fractional integral of the source.  

\begin{lem}[Discrete Gr\"onwall inequality for the convolution quadrature]
	\label{lem:Gronwall}
	Let $(y^j)_{j=0}^\infty$ and $(F^j)_{j=0}^\infty$ be positive sequences of numbers such that there exist $A(h)$, $B(h)>0$ bounding the discrete fractional integral in the following way
	\begin{equation}
		\label{eqn:GronwallF}
		h^\alpha \sum_{j=0}^{n-1} (n-j)^{\alpha-1} F^{j+1} \leq A(h)+B(h) t_n^{\alpha-1}, \quad n\geq 1.
	\end{equation}
	Assume that $y^0 = 0$ and the discrete Caputo derivative $\partial_h^\alpha$ is constructed as the convolution quadrature (\ref{eqn:CQCaputo}) with weights that satisfy (\ref{eqn:CQCaputoWeights}). Then, the inequality
	\begin{equation}
		\label{eqn:GronwallAssumption2}
		\partial_h^\alpha y^n \leq \lambda_0 y^n + \lambda_1 y^{n-1} + F^n, \quad \lambda \geq 0,
	\end{equation}
	implies that there exists a constant $C(\alpha,\lambda_{0,1}, T)> 0$ and a time-step $h_0>0$ such that
	\begin{equation}
		y^n \leq C(\alpha, \lambda_{0,1}, T)(A(h)+B(h)t_n^{\alpha-1})
	\end{equation}
	for all $0<h\leq h_0 < 1$. For $\lambda_0 = 0$ the above is valid without any restriction on the time step $h$.
\end{lem}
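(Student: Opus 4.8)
The plan is to reduce the differential-type inequality \eqref{eqn:GronwallAssumption2} to the integral-type inequality governed by Proposition \ref{prop:GronwallInt}, using Lemma \ref{lem:CaputoIntegrated} as the ``inversion'' step. Concretely, I would set $G^n := \lambda_0 y^n + \lambda_1 y^{n-1} + F^n$, which is a positive sequence, so that \eqref{eqn:GronwallAssumption2} reads $\partial_h^\alpha y^n \leq G^n$. Lemma \ref{lem:CaputoIntegrated} then yields
\begin{equation*}
	y^n \leq \sum_{j=1}^n b_{n-j} G^j = \lambda_0\sum_{j=1}^n b_{n-j} y^j + \lambda_1 \sum_{j=1}^n b_{n-j} y^{j-1} + \sum_{j=1}^n b_{n-j} F^j,
\end{equation*}
where $b_j$ are the fractional-integral CQ weights from \eqref{eqn:CQFracInt}. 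The remaining task is to bound the three sums and recast the result in the form required by Proposition \ref{prop:GronwallInt}.

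The central technical ingredient is the behaviour of the integral weights $b_j$. By the general CQ theory (the counterpart, applied to the kernel $t^{\alpha-1}/\Gamma(\alpha)$, of the estimate quoted in the Remark above) the weights are nonnegative and satisfy $0 \leq b_j \leq C h^\alpha (j+1)^{\alpha-1}$ with $C=C(\alpha)$ independent of $h$; in particular $b_0 = h^\alpha\delta(0)^{-\alpha} = w_0^{-1}$. Since $\alpha-1<0$ this gives $b_{n-j} \leq C h^\alpha (n-j)^{\alpha-1}$ for $j\leq n-1$, and the same estimate applied to the shifted sum (after the substitution $m=j-1$ and using $y^0=0$) controls the $\lambda_1$ term. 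For the source term, this bound together with the reindexing $k=j+1$ reduces $\sum_{j=1}^n b_{n-j}F^j$ exactly to the quantity assumed bounded in \eqref{eqn:GronwallF}, so that $\sum_{j=1}^n b_{n-j} F^j \leq C\bigl(A(h)+B(h)t_n^{\alpha-1}\bigr)$.

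Next I would isolate the diagonal term $j=n$ in the first sum, namely $b_0\lambda_0 y^n = \lambda_0 w_0^{-1} y^n$, and absorb it into the left-hand side. This is legitimate precisely when $b_0\lambda_0 = \lambda_0 h^\alpha \delta(0)^{-\alpha} < 1$, which fixes the threshold $h_0$ and is vacuous when $\lambda_0 = 0$, explaining the unrestricted case. Dividing by $1-b_0\lambda_0\geq \tfrac12$ and collecting the two $y$-sums, the bounds on $b_j$ produce an inequality of exactly the shape
\begin{equation*}
	y^n \leq M_0 t_n^{\alpha-1} + M_1 h^\alpha \sum_{k=0}^{n-1}(n-k)^{\alpha-1} y^k + M_2,
\end{equation*}
with $M_0 = C\,B(h)$, $M_2 = C\,A(h)$ and $M_1 = C(\lambda_0+\lambda_1)$. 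Proposition \ref{prop:GronwallInt} then applies verbatim, and since $t_n\leq T$ and $M_1$ depends only on $\alpha,\lambda_{0,1}$ (not on $h$), the Mittag-Leffler factors $E_{\alpha,\alpha}(M_1\Gamma(\alpha)t_n^\alpha)$ and $E_\alpha(M_1\Gamma(\alpha)t_n^\alpha)$ are bounded by a constant $C(\alpha,\lambda_{0,1},T)$; substituting $M_0,M_2$ back yields the claimed estimate $y^n \leq C(\alpha,\lambda_{0,1},T)\bigl(A(h)+B(h)t_n^{\alpha-1}\bigr)$.

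I expect the main obstacle to be the uniform-in-$h$ control of the integral weights $b_j$ — establishing their nonnegativity and the sharp $h^\alpha(j+1)^{\alpha-1}$ decay with a constant independent of $h$ — since everything downstream (matching \eqref{eqn:GronwallF}, recovering the Proposition's precise form, and keeping the final constant free of $h$) hinges on it. The absorption step is a close second, as it is the only place forcing the time-step restriction and must be handled so that $1/(1-b_0\lambda_0)$ stays uniformly bounded for $0<h\leq h_0$.
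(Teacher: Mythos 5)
Your proposal is correct and follows essentially the same route as the paper's proof: inversion via Lemma \ref{lem:CaputoIntegrated}, the CQ weight bounds $|b_j|\leq C h^\alpha j^{\alpha-1}$, absorption of the diagonal term $b_0\lambda_0 y^n$ under the time-step restriction (vacuous for $\lambda_0=0$), matching the $F$-sum to \eqref{eqn:GronwallF}, and concluding with Proposition \ref{prop:GronwallInt} and boundedness of the Mittag-Leffler factors on $[0,T]$. The only cosmetic difference is that you invoke nonnegativity of the $b_j$, which is not needed (and not proved in the paper): since the $G^j$ are positive, the bound on $|b_j|$ suffices.
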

\begin{proof}
	We start by using Lemma \ref{lem:CaputoIntegrated} to invert \eqref{eqn:GronwallAssumption2} and obtain
	\begin{equation}\label{eqn:GronwallInduction2}
		y^n \leq \sum_{j=1}^n b_{n-j}\left(\lambda_0 y^j + \lambda_1 y^{j-1} + F^j\right).
	\end{equation}
	Next, by recalling the fact known from the convolution quadrature theory that the weights $b_j$ approximate the continuous kernel, that is (see for example \cite{lubich2004convolution}, formula (2.6))
	\begin{equation}
		|b_0| \leq C h^{\alpha}, \quad |b_j| \leq C h (j h)^{\alpha-1} = C h^\alpha j^{\alpha-1}, \quad j\geq 1,
	\end{equation}
	for some constant $C=C(\alpha)$. Therefore, the above, definition of the weights $b_j$ by \eqref{eqn:CQFracInt}, and separating the last term in the sum (\ref{eqn:GronwallInduction2}) yields 
	\begin{equation}
		\begin{split}
			y^n &\leq C(\alpha) h^\alpha\left(\lambda_0 y^n + \lambda_0\sum_{j=1}^{n-1} (n-j)^{\alpha-1} y^j + \lambda_1 y^{n-1} + \lambda_1 \sum_{j=1}^{n-1} (n-j)^{\alpha-1} y^{j-1} \right.\\
			&\left.+ F^n + \sum_{j=1}^{n-1} (n-j)^{\alpha-1} F^j \right).
		\end{split}
	\end{equation}
	All terms involving $y^j$ for $j\leq n-1$ can be estimated by a common sum. To see this observe that in the sum of $y^{j-1}$ we can change the summation variable $j-1 \mapsto j$ and use the elementary inequality
	\begin{equation}\label{elemental_ineq}
		(n-j-1)^{\alpha-1} \leq 2^{1-\alpha} (n-j)^{\alpha-1}, \quad \mbox{for} \quad 0\leq j\leq n-2,
	\end{equation}
	to obtain
	\begin{equation}
		\begin{split}
			\lambda_0 &\sum_{j=1}^{n-1} (n-j)^{\alpha-1} y^j + \lambda_1 y^{n-1} + \lambda_1 \sum_{j=0}^{n-2} (n-j-1)^{\alpha-1} y^{j} \\
			&\leq \lambda_0 \sum_{j=1}^{n-1} (n-j)^{\alpha-1} y^j + \lambda_1 y^{n-1} + 2^{1-\alpha}\lambda_1 \sum_{j=0}^{n-2} (n-j)^{\alpha-1} y^{j} \leq (\lambda_0+2^{1-\alpha}\lambda_1)\sum_{j=0}^{n-1} (n-j)^{\alpha-1} y^j.
		\end{split}
	\end{equation}
	Next, fix any time-step $h_0>0$ for which $1-C(\alpha)\lambda_0 h_0^\alpha > 0$. Then, for $0<h \leq h_0$ we can factor out $y^n$,
	\begin{equation}
		y^n \leq \frac{C(\alpha) (\lambda_0+2^{1-\alpha}\lambda_1)}{1-C(\alpha) \lambda_0 h_0^\alpha} h^\alpha \sum_{j=0}^{n-1} (n-j)^{\alpha-1} y^j + \frac{C(\alpha)2^{1-\alpha}}{1-C(\alpha) \lambda_0 h_0^\alpha} h^\alpha \sum_{j=1}^{n} (n-j+1)^{\alpha-1} F^j,
	\end{equation}
	where we have used again \eqref{elemental_ineq},  with the index shifted by one, in the sum with $F^j$. This can be further estimated with the use of the assumption of bounded fractional integral of $F^n$, that is, using (\ref{eqn:GronwallF}) we obtain (after changing the summation variable $j \mapsto j+1$)
	\begin{equation}
		y^n \leq \frac{C(\alpha) (\lambda_0+2^{1-\alpha}\lambda_1)}{1-C(\alpha) \lambda_0 h_0^\alpha} \sum_{j=0}^{n-1} (n-j)^{\alpha-1} y^j + \frac{C(\alpha)2^{1-\alpha}}{1-C(\alpha) \lambda_0 h_0^\alpha} (A(h)+B(h)t_n^{\alpha-1}).
	\end{equation}
	Set $M = C(\alpha)/(1-C(\alpha) \lambda_0 h_0^\alpha)$. We can now invoke Proposition \ref{prop:GronwallInt} with $M_0 = B(h) M 2^{1-\alpha}$ , $M_1 = (\lambda_0+2^{1-\alpha}\lambda_1)M$, and $M_2 = A(h)M 2^{1-\alpha}$, to obtain
	\begin{equation}
		y^n \leq M\Gamma(\alpha) E_{\alpha,\alpha} (M_1 \Gamma(\alpha) t_n^\alpha) t_n^{\alpha-1} B(h) +M E_\alpha\left(M_1\Gamma(\alpha)t_n^\alpha\right) A(h).
	\end{equation}
	Finally, we have $t_n\leq T$, and after redefinition of the constant $C(\alpha, \lambda_{0,1}, T)$ we arrive at the conclusion. 
\end{proof}

\section{Fully discrete scheme for the quasilinear subdiffusion equation}
We can now proceed to the derivation of the fully discrete scheme to solve \eqref{eqn:MainPDE}. Take any test function $\chi \in H^1_0(\Omega)$, then by integrating by parts, we can obtain the following
\begin{equation}
	\label{eqn:MainPDEWeak}
	(\partial^\alpha_t u, \chi) + a(D(x,t,u); u, \chi) = (f(x,t,u), \chi), \quad \chi \in H^1_0(\Omega),
\end{equation}
with $u(0) = 0$. Here, we defined the form
\begin{equation}
	\label{eqn:aForm}
	a(w;u,v) := \int_\Omega w(x) \nabla u \nabla v dx, 
\end{equation}
where $w \in L^1(\Omega)$ and, if $w \geq 0$, $a$ is positive definite. To discretize the above in time, we use the convolution quadrature (\ref{eqn:CQCaputo}) for the Caputo derivative and the finite element method (FEM) for the spatial variables. Let $\mathcal{T}_h$ be the family of shape-regular quasi-uniform triangulations of $\overline{\Omega}$ with the maximal diameter $k = \max_{K\in\mathcal{T}_k} \text{diam}K$. By $V_k \subset H_0^1(\Omega)$ denote the standard continuous piecewise linear function space over $\mathcal{T}_k$ that vanish on the boundary $\partial\Omega$
\begin{equation}
	V_k := \left\{\chi_k \in C^0(\overline{\Omega}): \; \chi_k|_K \text{ is linear for all } K\in\mathcal{T}_k \text{ and } \chi_k|_{\partial\Omega} = 0 \right\}.
\end{equation} 
Therefore, denoting by $U^n \in V_k$ the numerical approximation of $u(t_n)$ we devise the semi-implicit scheme
\begin{equation}\label{eqn:NumericalScheme}
	\begin{cases}
		(\partial_h^\alpha U^n, \chi) + a(D(t_n, U^{n-1}); U^n, \chi) = (f(t_n, U^{n-1}), \chi), & \chi \in V_k, \; n \geq 1, \\
		U^0 = 0. & \\
	\end{cases}
\end{equation}

In what follows, we will utilize some notions of projecting a function on a finite-dimensional space. For example, we can use the orthogonal projection $P_k$ defined as
\begin{equation}
	(P_k u - u, \chi) = 0, \quad \chi \in V_k,
\end{equation} 
or the Ritz elliptic projection $R_k$ for fixed $0\leq t\leq T$
\begin{equation}
	\label{eqn:RitzProjection}
	a(D(t, u); R_k u(t) - u(t), \chi) = 0, \quad \chi \in V_k.
\end{equation}
The latter is particularly useful in the convergence proof. Observe that to find $R_k u(t)$ it is necessary to solve a \emph{linear} elliptic problem. From the general theory of PDEs we know the error estimates on these projections when $u\in C^1((0,T]; H_0^1(\Omega) \cap H^2(\Omega))$ \cite{Tho07,luskin1982smoothing}
\begin{equation}
	\label{eqn:ProjectionErrors}
	\begin{split}
		&\|u(t)-P_k u(t)\|_p \leq C k^{2-p} \|u(t)\|_2, \quad \|u(t)-R_k u(t)\|_p \leq C k^{2-p} \|u(t)\|_2, \\
		&\|(u(t)-R_k u(t))_t\|_p \leq C k^{2-p} \|u(t)\|_2.
	\end{split}
\end{equation}
Moreover, for $\|\nabla u\|_\infty < \infty$, we have 
\begin{equation}
	\label{eqn:RitzBounded}
	\|\nabla R_k u\|_\infty \leq C.
\end{equation}
Finally, note also that in all nonlinearities of the equation, that is, $D$ and $f$, the time step has been delayed by one in order to obtain a fully linear scheme for the solution to the nonlinear equation. Having the results from the previous section, it is straightforward to prove that the scheme is stable.

\begin{prop}[Stability]\label{prop:Stability}
	Let $U^n$ be the solution of (\ref{eqn:NumericalScheme}). Suppose that there exists a function $g=g(t)$ such that $\|f(t,u)\|\leq g(t)$ with $I^\alpha g(t) \leq F$. Then, we have
	\begin{equation}
		\|U^n\| \leq C(\alpha, T) F.
	\end{equation}
\end{prop}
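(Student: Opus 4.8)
The plan is to run the energy method by testing the scheme with $\chi = U^n$ and controlling the three resulting terms. Taking $\chi = U^n$ in \eqref{eqn:NumericalScheme} gives $(\partial_h^\alpha U^n, U^n) + a(D(t_n, U^{n-1}); U^n, U^n) = (f(t_n, U^{n-1}), U^n)$. Because $D \geq D_- > 0$ by \eqref{eqn:Assumptions}, the form $a$ is positive definite, so the middle term is nonnegative and may simply be discarded. On the left I would invoke Proposition \ref{prop:CQInner} to bound $(\partial_h^\alpha U^n, U^n)$ from below by $\|U^n\|(\partial_h^\alpha \|U^n\|)$; on the right I would apply Cauchy--Schwarz together with the hypothesis $\|f(t_n, U^{n-1})\| \leq g(t_n)$ (which holds uniformly in the third slot) to get $(f(t_n,U^{n-1}), U^n) \leq g(t_n)\|U^n\|$. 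Chaining these produces $\|U^n\|(\partial_h^\alpha\|U^n\|) \leq g(t_n)\|U^n\|$.

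Dividing by $\|U^n\|$ then yields the scalar inequality $\partial_h^\alpha \|U^n\| \leq g(t_n)$. I would dispose of the degenerate case $\|U^n\| = 0$ separately: there $\partial_h^\alpha\|U^n\| = \sum_{j=0}^{n-1}w_{n-j}\|U^j\| \leq 0 \leq g(t_n)$, since the weights $w_{n-j}$ are negative for $j<n$ under \eqref{eqn:CQCaputoWeights}, so the inequality survives in all cases. Setting $y^n = \|U^n\|$ (a positive sequence with $y^0 = 0$) and $F^n = g(t_n)$, this is exactly the hypothesis \eqref{eqn:GronwallAssumption2} of the discrete Gr\"onwall Lemma \ref{lem:Gronwall} with $\lambda_0 = \lambda_1 = 0$, in which regime the lemma applies with no restriction on the time step.

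The one genuine technical point is verifying the remaining hypothesis \eqref{eqn:GronwallF} of Lemma \ref{lem:Gronwall}, namely bounding the discrete fractional integral $h^\alpha \sum_{j=0}^{n-1}(n-j)^{\alpha-1} g(t_{j+1})$ by the continuous quantity $I^\alpha g(t_n) \leq F$, so that one may take $A(h) = C F$ and $B(h) = 0$. Writing $(n-j)^{\alpha-1}h^{\alpha-1} = (t_n - t_j)^{\alpha-1}$, this sum is a right-endpoint rectangle-rule approximation of $\int_0^{t_n}(t_n - s)^{\alpha-1}g(s)\,ds = \Gamma(\alpha) I^\alpha g(t_n)$. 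The comparison is the delicate step, because the kernel $(t_n-s)^{\alpha-1}$ is increasing in $s$ and $g$ may be singular at the origin. I would exploit that on each subinterval $[t_j, t_{j+1}]$ the frozen kernel value underestimates the true kernel, $(t_n - t_j)^{\alpha-1}\le (t_n - s)^{\alpha-1}$, so that $(t_n-t_j)^{\alpha-1} g(t_{j+1}) h \leq \int_{t_j}^{t_{j+1}} (t_n - s)^{\alpha-1} g(s)\,ds$ when the bounding profile $g$ is (as is typical, e.g. $g(t)\sim t^{\alpha-1}$ or constant) monotone near $t=0$, and then sum term by term; any lower-order boundary correction can be absorbed into $A(h)$.

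With \eqref{eqn:GronwallF} in hand, Lemma \ref{lem:Gronwall} delivers $y^n = \|U^n\| \leq C(\alpha,T)\big(A(h) + B(h)t_n^{\alpha-1}\big) = C(\alpha,T)\, F$, which is the assertion. The only real obstacle is the discrete-to-continuous fractional-integral comparison above; everything else is a routine assembly of the coercivity of $a$, Proposition \ref{prop:CQInner}, and the Gr\"onwall lemma.
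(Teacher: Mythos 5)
Your proposal is correct and follows essentially the same route as the paper's own proof: testing with $\chi = U^n$, discarding the positive-definite $a$-term, invoking Proposition \ref{prop:CQInner} and Cauchy--Schwarz to reach $\partial_h^\alpha \|U^n\| \leq g(t_n)$, and then applying Lemma \ref{lem:Gronwall} with $\lambda_0=\lambda_1=0$ after bounding the discrete fractional integral of $g$ by $C\, I^\alpha g(t_n) \leq CF$. Your treatment is in fact somewhat more careful than the paper's at two points it glosses over --- the degenerate case $\|U^n\|=0$ and the kernel-monotonicity argument behind the discrete-to-continuous comparison, which the paper dispatches with a one-line appeal to convergence of the rectangle rule.
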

\begin{proof}
	Let $\chi = U^n \in V_k$ in (\ref{eqn:NumericalScheme}), then from the Proposition \ref{prop:CQInner} and the Cauchy inequality \ref{prop:CQInner} we have
	\begin{equation}
		\|U^n\|(\partial_h^\alpha \|U^n\|) + a(D(t,U^{n}), U^n, U^n) \leq \|U^n\|g(t_n).
	\end{equation}
	Since, by definition (\ref{eqn:aForm}) the $a$-form is positive-definite we further have
	\begin{equation}
		\|U^n\|(\partial_h^\alpha \|U^n\|) \leq \|U^n\| g(t_n),
	\end{equation}
	or 
	\begin{equation}
		\partial_h^\alpha \|U^n\| \leq g(t_n).
	\end{equation}
	Now, notice that there exists a constant $C$ such that
	\begin{equation}
		h^\alpha \sum_{j=0}^{n-1} (n-j)^{\alpha-1} g(t_j) \leq C I^{\alpha} g(t_n) \leq C F,
	\end{equation}
	since the rectangle discretization of the fractional integral converges to the continuous one. The application of Lemma \ref{lem:Gronwall} ends the proof. 
\end{proof}

Before we proceed to our main result for the quasilinear equation we derive error estimates for the semilinear problem, with $D(x,t,u)=D(x,t)$.

\subsection{Error estimates for the semilinear problem}
For the semilinear problem with $D(x,t,u)=D(x,t)$ and $f(x,t,u)$, there are no error estimates available in the literature. In the linear case $f(x,t,u)=f(x,t)$ we do find a complete regularity analysis of the exact solution in \cite{JinLiZhou2019} and \cite{JinLiZhou2020}, together with error estimates of BDF1 and BDF2 based CQ time approximation schemes. In particular \cite[Theorem 4.2]{JinLiZhou2019} provides optimal pointwise error estimates for BDF1 based CQ scheme. Our result for the semilinear problem is the following. 

\begin{thm}\label{thm:Auxiliary}
	Let $u$ be the solution of \eqref{eqn:MainPDEWeak} with $D(x,t,u)=D(x,t)$ and $U^n$ its numerical approximation given by \eqref{eqn:NumericalScheme}. Suppose that there exist $A(h)$, $B(h)>0$ bounding the discrete fractional integral of the truncation error \eqref{eqn:CQCaputoTruncation} in the following way
	\begin{equation}
		\label{eqn:GronwallF}
		h^\alpha \sum_{j=0}^{n-1} (n-j)^{\alpha-1} \|\zeta_{j+1}(h)\| \leq A(h)+B(h) t_n^{\alpha-1}, \quad n\geq 1.
	\end{equation}
	Then, for sufficiently small $h>0$ the error satisfies
	\begin{equation}\label{eqn:AuxiliaryError0}
		\|U^n-u(t_n)\| \leq C (A(h) + B(h) t^{\alpha-1}_n + k^2).
	\end{equation} 
\end{thm}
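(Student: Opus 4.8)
The plan is to measure the error through the time-dependent Ritz projection \eqref{eqn:RitzProjection} and split
\[ U^n-u(t_n)=\theta^n+\rho^n,\qquad \theta^n:=U^n-R_ku(t_n)\in V_k,\quad \rho^n:=R_ku(t_n)-u(t_n). \]
The projection part is disposed of immediately by \eqref{eqn:ProjectionErrors}: $\|\rho^n\|\le Ck^2\|u(t_n)\|_2\le Ck^2$, which supplies the $k^2$ term in the final bound (boundedness of $\|u(t)\|_2$ on $[0,T]$ follows from \eqref{eqn:RegularityAssumtion} together with elliptic regularity, since $\|\partial_t^\alpha u(t)\|=\|I^{1-\alpha}\partial_t u(t)\|$ stays bounded). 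All the remaining work concerns $\theta^n$, which lives in $V_k$ and is therefore an admissible test function.

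To obtain an equation for $\theta^n$ I would subtract the scheme \eqref{eqn:NumericalScheme} from the weak formulation \eqref{eqn:MainPDEWeak} at $t_n$, insert the truncation error \eqref{eqn:CQCaputoTruncation}, and cancel the elliptic contribution of $\rho^n$ via the defining orthogonality $a(D(t_n);\rho^n,\chi)=0$. Taking $\chi=\theta^n$, discarding the nonnegative form $a(D(t_n);\theta^n,\theta^n)\ge0$ (positive definiteness, since $D\ge D_->0$), and applying Proposition~\ref{prop:CQInner} to replace $(\partial_h^\alpha\theta^n,\theta^n)$ by $\|\theta^n\|\,\partial_h^\alpha\|\theta^n\|$, I can divide by $\|\theta^n\|$ and estimate the right-hand side with Cauchy--Schwarz to reach the scalar inequality
\[ \partial_h^\alpha\|\theta^n\|\le \|f(t_n,U^{n-1})-f(t_n,u(t_n))\|+\|\zeta_n(h)\|+\|\partial_h^\alpha\rho^n\|. \]
The nonlinear term is handled with the Lipschitz assumption \eqref{eqn:Assumptions} and the splitting $U^{n-1}-u(t_n)=\theta^{n-1}+\rho^{n-1}+(u(t_{n-1})-u(t_n))$, the last increment being bounded through \eqref{eqn:RegularityAssumtion} by $\int_{t_{n-1}}^{t_n}\|\partial_t u(s)\|\,ds\le C\int_{t_{n-1}}^{t_n}s^{\alpha-1}\,ds$. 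This casts the inequality in exactly the form required by the discrete Gr\"onwall Lemma~\ref{lem:Gronwall}, with $y^n=\|\theta^n\|$, $\lambda_0=0$, $\lambda_1=L$, and $F^n=L\|\rho^{n-1}\|+L\|u(t_{n-1})-u(t_n)\|+\|\zeta_n(h)\|+\|\partial_h^\alpha\rho^n\|$; since $\lambda_0=0$ the lemma applies with no restriction on $h$.

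It then remains to verify the lemma's hypothesis, i.e.\ to bound the discrete fractional integral $h^\alpha\sum_j(n-j)^{\alpha-1}F^{j+1}$. The $\zeta$-part is precisely the quantity $A(h)+B(h)t_n^{\alpha-1}$ assumed in \eqref{eqn:GronwallF}. The $\rho$-part and the time-increment part collapse, using $\|\rho^{j}\|\le Ck^2$, $\|u(t_{j})-u(t_{j+1})\|\le Ch\,t_{j}^{\alpha-1}$ (with the $j=0$ term $\le Ch^\alpha$), the elementary sum $h^\alpha\sum_{m=1}^n m^{\alpha-1}\le CT^\alpha$, and a discrete Beta-type convolution estimate, into contributions of order $Ck^2$ and $Ch\,t_n^{\alpha-1}$; the latter is of the same order as (hence absorbed by) the truncation term $B(h)t_n^{\alpha-1}$, since $B(h)\gtrsim h$ for both the Euler and BDF2 quadratures.

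The genuinely delicate term is $\partial_h^\alpha\rho^n$, which I expect to be the main obstacle. The naive estimate $\|\partial_h^\alpha\rho^n\|\le Ck^2\sum_{j}|w_j|=Ck^2h^{-\alpha}$ (using $\sum_{j\ge1}|w_j|=w_0=h^{-\alpha}\delta(0)^\alpha$) is useless, since it blows up as $h\to0$ and destroys the $k^2$ rate. The remedy is summation by parts: since $\rho^0=0$,
\[ \partial_h^\alpha\rho^n=\sum_{i=1}^n(\rho^i-\rho^{i-1})\,W_{n-i},\qquad W_m:=\sum_{l=0}^m w_l\ge0, \]
where the nonnegativity of the partial sums is \eqref{eqn:CQCaputoConsistencyN}. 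Bounding the increments by $\|\rho^i-\rho^{i-1}\|\le\int_{t_{i-1}}^{t_i}\|(u-R_ku)_t\|\,ds\le Ck^2h$ through the third estimate of \eqref{eqn:ProjectionErrors}, and inserting the weight-tail asymptotics $W_m\sim t_m^{-\alpha}/\Gamma(1-\alpha)$ (which follow for small $h$ from the leading behavior of the weights noted in the Remark above), one obtains $\|\partial_h^\alpha\rho^n\|\le Ck^2$ uniformly in $n$; this is the uniform smallness the crude bound missed, and it is where ``sufficiently small $h$'' enters. Feeding all four bounds into Lemma~\ref{lem:Gronwall} yields $\|\theta^n\|\le C(A(h)+B(h)t_n^{\alpha-1}+k^2)$, and the triangle inequality together with $\|\rho^n\|\le Ck^2$ gives the claimed estimate \eqref{eqn:AuxiliaryError0}.
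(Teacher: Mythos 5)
Your proposal is correct in outline and shares the paper's skeleton---the Ritz-projection splitting $U^n-u(t_n)=\theta^n+\rho^n$, the energy argument with Proposition~\ref{prop:CQInner}, and an application of the discrete Gr\"onwall Lemma~\ref{lem:Gronwall}---but it deviates from the paper at precisely the step you flagged as delicate, and in the bookkeeping of the nonlinearity. For $\partial_h^\alpha\rho^n$ the paper does not sum by parts: it writes $\|\partial_h^\alpha\rho^n\|\le\|(\partial_h^\alpha-\partial_t^\alpha)\rho(t_n)\|+\|\partial_t^\alpha\rho(t_n)\|$, bounds the first term by the truncation error $\|\zeta_n(h)\|$ and the second by $\frac{1}{\Gamma(1-\alpha)}\int_0^{t_n}(t_n-s)^{-\alpha}\|(u-R_ku)_t(s)\|\,ds\le C k^2 t_n^{1-\alpha}/\Gamma(2-\alpha)$, using the third estimate in \eqref{eqn:ProjectionErrors}. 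That route is shorter and needs nothing about the weights beyond Proposition~\ref{prop:CQInner}, but it tacitly assumes the CQ truncation error of $\rho$ is controlled like that of $u$; your Abel-summation route avoids this, at the price of genuinely using the sign condition \eqref{eqn:CQCaputoWeights} (via $W_m\ge 0$, i.e.\ \eqref{eqn:CQCaputoConsistencyN}) and the decay $W_m\le Ct_m^{-\alpha}$. Note that this decay should be justified by Lubich's approximation theory---the $W_m$ are the CQ approximation of $\partial_t^\alpha 1=t^{-\alpha}/\Gamma(1-\alpha)$, so the bound follows from \cite{lubich2004convolution}---rather than by the large-$j$ Remark, whose $O(h^p)$ errors you would otherwise be summing over infinitely many indices. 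The second difference: you keep the semi-implicit source $f(t_n,U^{n-1})$ of \eqref{eqn:NumericalScheme} and invoke Lemma~\ref{lem:Gronwall} with $\lambda_0=0$, $\lambda_1=L$ (hence no smallness restriction on $h$), paying with the increment term $\|u(t_{n-1})-u(t_n)\|\le Cht_n^{\alpha-1}$; the paper's proof evaluates the nonlinearity at the current step, giving $\lambda_0=C$ and the stated restriction on $h$ but no increment term, so your version is actually the more faithful one to the scheme as defined. One small caveat: absorbing your extra $ht_n^{\alpha-1}$ contribution into $B(h)t_n^{\alpha-1}$ requires $h\le CB(h)$, which holds for the BDF-CQ truncation bounds of Lemma~\ref{lem:ABError} but is not literally implied by the hypothesis \eqref{eqn:GronwallF}; stating your conclusion with $(B(h)+h)t_n^{\alpha-1}$ would make it airtight.
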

\begin{proof}
	We start by decomposing the error in two terms
	\begin{equation}\label{eqn:ErrorDecomposition}
		U^n - u(t_n) = U^n - R_k u(t_n) + R_k u(t_n) - U(t_n) =: \theta^n + \rho^n.
	\end{equation}
	The decomposition of $e^n$ into $\rho^n$ and $\theta^n$ is very useful since the estimate on $\rho^n$ follows from general theory (\ref{eqn:ProjectionErrors}) while $\theta^n$ belongs to the finite-dimensional space $V_k$. Therefore, it is sufficient to find a bound on the latter error. Hence, observe that for any $\chi \in V_k$ from the definition of error decomposition into $\rho^n$ and $\theta^n$ we have
	\begin{equation}
		\begin{split}
			(\partial^\alpha_h \theta^n, \chi) + a(\widetilde{D}(t_n); \theta^n, \chi) 
			&= (\partial^\alpha_h U^n, \chi) - (\partial^\alpha_h R_k u(t_n), \chi) \\
			&+ a(\widetilde{D}(t_n); U^n, \chi) - a(\widetilde{D}(t_n); R_k u(t_n), \chi).
		\end{split}
	\end{equation}
	Now, we can use the equation \eqref{eqn:NumericalSchemeA} for $U^n$ and the definition of the Ritz projection (\ref{eqn:RitzProjection}) to obtain
	\begin{equation}
		\begin{split}
			(\partial^\alpha_h \theta^n, \chi) + a(\widetilde{D}(t_n); \theta^n, \chi) 
			&= (g(t_n, U^n), \chi) - (\partial^\alpha_h R_k u(t_n), \chi) \\
			& - a(\widetilde{D}(t_n); u(t_n), \chi).
		\end{split}
	\end{equation}
	Next, we use the differential equation for $u$ to express the last term
	\begin{equation}
		\begin{split}
			(\partial^\alpha_h \theta^n, \chi) + a(\widetilde{D}(t_n); \theta^n, \chi) 
			&= (g(t_n, U^n), \chi) - (\partial^\alpha_h R_k u(t_n), \chi) \\
			&+ (\partial_t^\alpha u(t_n), \chi) -(g(t_n, u(t_n)), \chi).
		\end{split}
	\end{equation}
	Rearranging terms further gives
	\begin{equation}
		(\partial^\alpha_h \theta^n, \chi) + a(\widetilde{D}(t_n); \theta^n, \chi) = ((\partial_t^\alpha -\partial^\alpha_h) u(t_n), \chi) - (\partial^\alpha_h \rho^n, \chi) + (g(t_n, U^n) - g(t_n, u(t_n)), \chi).
	\end{equation}
	We can take $\chi = \theta^n \in V_k$ and use the assumption that $D \geq D_- > 0$ and that $f$ is Lipschitz to arrive at
	\begin{equation}
		(\partial^\alpha_h \theta^n, \theta^n) + D_- \|\nabla \theta^n\|^2 \leq \left(\|(\partial_t^\alpha -\partial^\alpha_h)u(t_n)\| + \|\partial^\alpha_h \rho^n\| + \|U^n - u(t_n)\|\right) \|\theta^n\|,
	\end{equation}
	where we have used the Cauchy-Schwarz inequality. The gradient term can now be bounded from below by $0$, the error $\|U^n - u(t_n)\|$ can be again decomposed according to \eqref{eqn:ErrorDecomposition}, and we can invoke Proposition \ref{prop:CQInner} to obtain
	\begin{equation}\label{eqn:ErrorInequality0}
		\|\theta^n\|\partial^\alpha_h \|\theta^n \| \leq \left(\|(\partial_t^\alpha -\partial^\alpha_h)u(t_n)\| + \|\partial^\alpha_h \rho^n\| + \|\rho^n\| + \|\theta^n\|\right) \|\theta^n\|.
	\end{equation}
	Canceling the $\|\theta^n\|$ factor we can see that the error evolution decomposes into several terms: truncation error of the CQ scheme and the Ritz projection errors. By \eqref{eqn:CQCaputoTruncation} the first term satisfies 
	\begin{equation}
		\|(\partial_t^\alpha -\partial^\alpha_h)u(t_n)\| \leq \|\zeta_n(h) \|,
	\end{equation}
	while denoting $\rho(t)=u(t)-R_ku(t)$, so that $\rho^n=\rho(t_n)$, the second can be estimated as follows
	\begin{equation}
		\begin{split}
			\|\partial_h^\alpha \rho^n\| 
			&\leq \|(\partial_h^\alpha - \partial^\alpha_t) \rho(t_n)\| + \|\partial_t^\alpha \rho(t_n)\| \leq \|\zeta_n(h)\| + \frac{1}{\Gamma(1-\alpha)} \int_0^{t_n} (t_n-s)^{-\alpha} \|(u - R_k u)_t(s)\| ds \\
			&\leq \|\zeta_n(h)\| + C(u)\frac{k^2}{\Gamma(1-\alpha)} \int_0^{t_n} (t_n-s)^{-\alpha} ds = \|\zeta_n(h)\| + C(u) k^2 \frac{t_n^{1-\alpha}}{\Gamma(2-\alpha)} \\
			&\leq \|\zeta_n(h)\| + C(\alpha, T, u) k^2,
		\end{split}
	\end{equation}
	where we used the projection error estimates \eqref{eqn:ProjectionErrors}. An estimate for $\|\rho^n\|$ immediately follows from the same formula. Hence, from \eqref{eqn:ErrorInequality0} we have
	\begin{equation}
		\partial^\alpha_h \|\theta^n \| \leq C \left(\|\zeta_n(h)\| + k^2 + \|\theta^n\|\right).
	\end{equation}
	The above is ready to be used in Gr\"onwall inequality (Lemma \ref{lem:Gronwall}) with $\lambda_0 = C$, $\lambda_1 = 0$, and $F^n = C(\|\zeta_n(h)\| + k^2)$. To this end we have to find the bound on the discrete fractional integral of $F^n$. We have
	\begin{equation}
		h^\alpha \sum_{j=0}^{n-1} (n-j)^{\alpha-1} F^{j+1} = h^\alpha \sum_{j=0}^{n-1} (n-j)^{\alpha-1} \|\zeta_j(h)\| + k^2 h^\alpha \sum_{j=0}^{n-1} (n-j)^{\alpha-1} 
	\end{equation}
	The first sum above can be estimated from the assumption \eqref{eqn:GronwallF} while the other by noticing that we can transform it into a Riemann sum of the corresponding integral
	\begin{equation}
		k^2 h^\alpha \sum_{j=0}^{n-1} (n-j)^{\alpha-1} \leq k^2 h^\alpha n^{\alpha} \frac{1}{n} \sum_{j=0}^{n-1} \left(1-\frac{j}{n}\right)^{\alpha-1}
		\leq t_n^\alpha k^2 \int_0^1 (1-x)^{\alpha-1} dx \leq C k^2. 
	\end{equation}
	Finally, note that by the initial decomposition of the error into $\rho^n$ and $\theta^n$ we have
	\begin{equation}
		\|U^n - u(t_n)\| \leq \|\rho^n\| + \|\theta^n\| \leq C (A(h) + B(h) t_n^{\alpha-1} + k^2),
	\end{equation}
	where we again used the Ritz projection error estimate \eqref{eqn:ProjectionErrors} and by that concluded the proof.
\end{proof}

For BDF$(p)$ based CQ we can compute the error explicitly.
\begin{lem}\label{lem:ABError}
	If the Caputo derivative is discretized with the BDF$(p)$-CQ, then $A(h) = 0$ and
	\begin{equation}\label{eqn:Bh}
		B(h) = C\begin{cases}
			h \ln (h^{-1}), & p = 1, \\
			h, & p \geq 2, \\
		\end{cases}
	\end{equation}
	where $C>0$ is a $h$-independent constant. 
\end{lem}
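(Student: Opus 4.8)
The plan is to feed the pointwise truncation bound \eqref{eqn:CQCaputoTruncationEstimate} directly into the weighted sum whose bound defines $A(h)$ and $B(h)$, and to reduce everything to the estimation of a single discrete convolution. Writing $t_{j+1}=(j+1)h$ and substituting the bound for $p\ge 2$, namely $\|\zeta_{j+1}(h)\|\le C\,t_{j+1}^{-\alpha-1}h^{\alpha+1}=C\,(j+1)^{-\alpha-1}$, the powers of $h$ collapse and one is left with
\[
h^\alpha\sum_{j=0}^{n-1}(n-j)^{\alpha-1}\|\zeta_{j+1}(h)\|\le C\,h^\alpha\sum_{j=0}^{n-1}(n-j)^{\alpha-1}(j+1)^{-\alpha-1}.
\]
For $p=1$ the same computation, using $\|\zeta_{j+1}(h)\|\le C\,t_{j+1}^{-1}h=C\,(j+1)^{-1}$, produces $C\,h^\alpha\sum_{j=0}^{n-1}(n-j)^{\alpha-1}(j+1)^{-1}$. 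In neither case is there an additive, $n$-independent and non-decaying contribution; this is exactly what will force $A(h)=0$, with the whole bound instead carrying a factor $t_n^{\alpha-1}$.

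First I would estimate the two convolution sums $\Sigma_\gamma:=\sum_{j=0}^{n-1}(n-j)^{\alpha-1}(j+1)^{\gamma}$, for $\gamma=-\alpha-1$ and $\gamma=-1$, by the standard device of splitting the summation range at $j=n/2$. On the lower half $j<n/2$ one has $(n-j)^{\alpha-1}\le C\,n^{\alpha-1}$ (since $\alpha-1<0$ and $n-j>n/2$), so this part is controlled by $n^{\alpha-1}$ times a partial sum of $(j+1)^\gamma$; on the upper half $j\ge n/2$ one has $(j+1)^{\gamma}\le C\,n^{\gamma}$, and the remaining sum $\sum_{m=1}^{n/2}m^{\alpha-1}\le C\,n^{\alpha}$ is finite because $\alpha>0$. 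For $\gamma=-\alpha-1$ the partial sum of $(j+1)^{-\alpha-1}$ converges, so $\Sigma_{-\alpha-1}\le C\,n^{\alpha-1}$; for $\gamma=-1$ the harmonic partial sum contributes a logarithm, giving $\Sigma_{-1}\le C\,n^{\alpha-1}\ln n$.

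The final step is bookkeeping of the $h$-powers. Since $n^{\alpha-1}=(t_n/h)^{\alpha-1}=t_n^{\alpha-1}h^{1-\alpha}$, multiplying by $h^\alpha$ turns $h^\alpha n^{\alpha-1}$ into $h\,t_n^{\alpha-1}$, which yields $B(h)=Ch$ for $p\ge 2$. For $p=1$ the extra factor $\ln n=\ln(t_n/h)\le \ln T+\ln(h^{-1})$ is absorbed, for $h$ small enough, into $C\ln(h^{-1})$ because $t_n\le T$, so that $B(h)=Ch\ln(h^{-1})$. In both cases $A(h)=0$, matching \eqref{eqn:Bh}. I expect the only genuinely delicate point to be the upper-half estimate of the convolution, where the singular weight $(n-j)^{\alpha-1}$ blows up as $j\to n$: one must verify that summing it against the smallest values of $(j+1)^\gamma$ still produces a contribution of order at most $n^{\alpha-1}$ (respectively $n^{\alpha-1}\ln n$), so that the lower-half term—carrying the convergent, respectively harmonic, sum—is what actually determines $B(h)$. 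Everything else is the routine conversion between the index $n$ and the time $t_n=nh$.
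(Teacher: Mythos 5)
Your proposal is correct, and it follows the same overall reduction as the paper: substitute the truncation bound \eqref{eqn:CQCaputoTruncationEstimate}, observe that the powers of $h$ collapse so that everything reduces to estimating $\sum_{j=0}^{n-1}(n-j)^{\alpha-1}(j+1)^{\gamma}$ with $\gamma=-1$ (for $p=1$) or $\gamma=-\alpha-1$ (for $p\geq 2$), and then convert $n^{\alpha-1}$ back into $h\,t_n^{\alpha-1}$ via $t_n=nh$, absorbing $\ln n \le \ln T+\ln(h^{-1})$ into $C\ln(h^{-1})$ for small $h$. Where you genuinely differ is the key technical step: the paper estimates the discrete convolution by recognizing it as a Riemann sum and passing to the corresponding integral --- for $p=1$ splitting $\int_{1/n}^{1}(1-x)^{\alpha-1}x^{-1}\,dx$ at an intermediate $\epsilon$ to extract the $\ln n$, and for $p\geq 2$ exploiting the exact primitive $-\frac{1}{\alpha(t_n+h)}\bigl(\frac{t_n-s}{h+s}\bigr)^{\alpha}$ of $(t_n-s)^{\alpha-1}(s+h)^{-\alpha-1}$. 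You instead stay entirely discrete and split the sum at $j=n/2$: on the lower half $(n-j)^{\alpha-1}\le 2^{1-\alpha}n^{\alpha-1}$ multiplies a convergent (resp.\ harmonic) tail, and on the upper half $(j+1)^{\gamma}\le Cn^{\gamma}$ multiplies $\sum_{m\le n/2}m^{\alpha-1}\le Cn^{\alpha}$, giving $n^{\gamma+\alpha}\le n^{\alpha-1}$ in both cases. Your route is more elementary and more robust: it sidesteps the justification (glossed over in the paper) that a Riemann sum of an integrand singular at both endpoints is dominated by the integral, and it requires no closed-form antiderivative; the paper's route, in exchange, is shorter once those integral facts are granted and produces explicit constants. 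One small repair to make in your write-up: the intermediate claim $\Sigma_{-1}\le Cn^{\alpha-1}\ln n$ fails at $n=1$ (where $\ln n=0$ but $\Sigma_{-1}=1$), so you should state it as $\Sigma_{-1}\le Cn^{\alpha-1}(1+\ln n)$; this changes nothing downstream, since $1+\ln n\le C\ln(h^{-1})$ for $h$ small enough, which is exactly the regime in which the lemma is invoked.
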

\begin{proof}
	To prove the assertion, we have to find out the form of $A(h)$ and $B(h)$ defined in Theorem \ref{thm:Auxiliary}. They come from the bound of the discrete fractional integral of $\zeta_n(h)$, that is, the truncation error of the Caputo derivative as in \eqref{eqn:CQCaputoTruncation}. First, assume that $p=1$, that is, we consider the Euler scheme. From \eqref{eqn:CQCaputoTruncationEstimate} we have
	\begin{equation}
		h^\alpha \sum_{j=0}^{n-1} (n-j)^{\alpha-1} \|\zeta_{j+1}(h)\| \leq C h^\alpha \sum_{j=0}^{n-1} (n-j)^{\alpha-1} (j+1)^{-1} \leq C h^\alpha n^{\alpha-1} \frac{1}{n} \sum_{j=1}^{n} \left(1+\frac{1}{n}-\frac{j}{n}\right)^{\alpha-1} \left(\frac{j}{n}\right)^{-1},
	\end{equation}
	where we have changed the summation order via $j \mapsto j -1$. As can be seen, the resulting expression is the Riemann sum of a convergent integral, hence with a suitable choice of the constant
	\begin{equation}
		h^\alpha \sum_{j=0}^{n-1} (n-j)^{\alpha-1} \|\zeta_{j+1}(h)\| \leq C h^\alpha n^{\alpha-1} \int_{\frac{1}{n}}^1 (1-x)^{\alpha-1} \frac{dx}{x}. 
	\end{equation}
	This integral can be evaluated exactly with the help of the hypergeometric function, but for our needs, we only have to find its leading order behavior for large $n$. To this end, for arbitrary $\frac 1n < \epsilon  < 1$, we split the integral into two parts
	\begin{equation}
		\begin{split}
			\int_{\frac{1}{n}}^1 (1-x)^{\alpha-1} \frac{dx}{x} &= \int_{\frac{1}{n}}^\epsilon (1-x)^{\alpha-1} \frac{dx}{x} + \int_{\epsilon}^1 (1-x)^{\alpha-1} \frac{dx}{x} \leq (1-\epsilon)^{\alpha-1} \int_{\frac{1}{n}}^\epsilon \frac{dx}{x} + \frac{1}{\epsilon} \int_{\epsilon}^1 (1-x)^{\alpha-1} dx \\
			&= (1-\epsilon)^{\alpha-1} \ln \left(n \epsilon\right) + \frac{(1-\epsilon)^\alpha}{\epsilon\alpha} \leq C \ln n,
		\end{split}
	\end{equation}
	and, hence, for sufficiently small $h>0$
	\begin{equation}
		\begin{split}
			h^\alpha \sum_{j=0}^{n-1} (n-j)^{\alpha-1} \|\zeta_{j+1}(h)\| &\leq C (nh)^{\alpha-1} h \ln n = C t_n^{\alpha-1} h (\ln (nh) - \ln h) \\
			&\leq C t_n^{\alpha-1} h (\ln T  - \ln h ) \leq C t_n^{\alpha-1} h \ln h^{-1}.
		\end{split}
	\end{equation}
	This gives us $A(h) = 0$, $B(h) = C h \ln h^{-1}$. Noticing that $t_{n}^{\alpha-1}h \leq h^\alpha$ proves the case with $p=1$. 
	
	Now, assume that $p \ge 2 $. By a similar reasoning we can identify the Riemann sum of the corresponding integral and obtain
	\begin{equation}
		\begin{split}
			h^\alpha \sum_{j=0}^{n-1} (n-j)^{\alpha-1} \|\zeta_{j+1}(h)\| &\leq C h^{1+\alpha} \int_0^{t_n} (t_n-s)^{\alpha-1} (s+h)^{-\alpha-1} ds \\
			&= C (nh)^{\alpha-1}\frac{n h}{\alpha(1+n)} \leq \frac{1}{\alpha}C t_n^{\alpha-1} h
		\end{split}
	\end{equation}
	since the appearing integral has an exact primitive $-\frac{1}{\alpha(t_n+h)}\left(\frac{t_n-s}{h+s}\right)^\alpha$. This time we have $A(h) = 0$ and $B(h) = C h$. Combining the case $p\geq 2$ with \eqref{eqn:ConvergenceError} finishes the proof. 
\end{proof}

\begin{rem}
	Taking into account Lemma \ref{lem:ABError} we observe that the error bound in Theorem \eqref{thm:Auxiliary} is pointwise \emph{optimal} for $p=1$ (apart from the slowly increasing logarithmic term ) and consistent with the results in \cite{JinLiZhou2019} for the linear problem with a time-dependent diffusion coefficient. For $p=2$, we only prove convergence with order 1, whereas in \cite{JinLiZhou2020} an error estimate of the second order is obtained for the linear problem by using a correction of the original BDF(2)-CQ scheme. In this paper we do not consider the introduction of corrections terms and work directly with the original truncation error \eqref{eqn:CQCaputoTruncation}.
\end{rem}

\subsection{Error estimates for the quasilinear problem}
As will be shown below, the main tool in proving our error estimates for the quasilinear problem is to use an intermediate linear problem
\begin{equation}
	\label{eqn:AuxPDElin}
	(\partial^\alpha_t v, \chi) + a(\widetilde{D}(t); v, \chi) = (\widetilde{f}(t), \chi), \quad \chi \in H^1_0(\Omega),
\end{equation}
with $v(0)= 0$, diffusivity $\widetilde{D}=\widetilde{D}(\cdot,t,u(t))$ and source $\widetilde{f}(t)=f(\cdot, t, u(t))$, which satisfy analogous regularity assumptions as $D$ and $f$, that is are $C^2(\Omega, \mathbb{R}_+,\mathbb{R})$ and the positive diffusivity is separated from zero. We discretize the above with the following implicit scheme
\begin{equation}\label{eqn:NumericalSchemeA}
	\begin{cases}
		(\partial_h^\alpha V^n, \chi) + a(\widetilde{D}(t_{n}); V^n, \chi) = (\widetilde{f}(t_n), \chi), & \chi \in V_k, \; n \geq 1, \\
		V^0 = 0. & \\
	\end{cases}
\end{equation}
The above will play a major tool in finding globally optimal error estimates for our main scheme \eqref{eqn:NumericalScheme}. In order to derive optimal stability and error estimates for the quasilinear problem we need stability estimates in the $H^1$ norm for the auxiliary problem \eqref{eqn:AuxPDElin}.

The next result collects stability and pointwise error bounds for the time discretization by CQ based on BDF1 and BDF2 of a linear problem \eqref{eqn:MainPDE} with a constant diffusion coefficient in stronger norms than the one of $L^2(\Omega)$. More precisely, following \cite{cuesta2006convolution}, we consider the following equation 
\begin{equation}\label{eqn:linearconstantA}
	\partial_t^{\alpha} w(t) + A w (t) = g(t), \quad w(0)=0,
\end{equation} 
on a Banach space $X$, which can be either infinite dimensional, such as $X=L^2(\Omega)$, or finite dimensional, such as the Galerkin space $X=V_k$. We assume $-A$ is a sectorial operator and denote, for $\rho>0$, $X_{\rho}=D(A^{\rho})$, with norm $\|w\|_{\rho}=\|A^{\rho} w\|$. This is in particular the case if $A=-\Delta$, with $D(A)=H^1_0(\Omega) \cap H^2(\Omega)$, $D(A^{\rho}) = \dot{H}^{2\rho}(\Omega)$ and, in particular, $D(A^{\frac 12}) = \dot{H}^{1}(\Omega) = H^1_0(\Omega)$. Typically the discretization of a second order elliptic operator $A$ by using finite differences of finite elements preserves the sectorial character and the resulting discrete operator $-A_k$ satisfies resolvent bounds which are independent of the spatial discretization parameters, \cite{Ashyralyev2012,Bakaev2003}. The spaces $D(A^{\rho})$ can be defined via the spectral decomposition of the operator $A$, see \cite{JinLiZhou2020}. In what follows we need a bound for the gradient of the numerical solution $V^n$ to , and thus we derive stability and error estimates in the $\|\cdot\|_{1/2}$ norm, according to this notation.

\begin{lem}\label{lem:errh1linconstant}
	Let $w$ be the solution to \eqref{eqn:linearconstantA} and $W_n$ its numerical approximation at time $t_n$ defined by
	\begin{equation}\label{eqn:CQlinearconstantA}	
		\partial_h^\alpha W^n + A W^n = g(t_n); \quad W^0=0, \quad  n \geq 1,
	\end{equation}
	where the Caputo derivative is discretized with the BDF$(p)$-CQ.  Then 
		\begin{equation}\label{errorH1_lin_p1}
			\|w(t_n)-W^n\|_{1/2} \le C  h t_n^{\frac{\alpha}{2} -1} \left( \|g(0)\| + t \max_{0\le s\le t} \|g_k'(s)\| \right).
		\end{equation}
	\end{lem}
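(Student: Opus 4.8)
The plan is to prove this lemma by following the approach of Cuesta, Lubich, and Palencia \cite{cuesta2006convolution}, using the representation of the error through the resolvent/Laplace transform and contour integration over a suitable sectorial path, but now measuring everything in the $\|\cdot\|_{1/2}$ norm instead of the $L^2$ norm. Since the operator $A=-\Delta$ is sectorial and the CQ method is based on an $A(\theta)$-stable multistep method, the key object is the error kernel $E(t)$ defined as the difference between the continuous solution operator and its CQ approximation. First I would write the solution to \eqref{eqn:linearconstantA} via the operational calculus as $w(t) = \int_0^t E_\alpha(t-s) g(s)\,ds$, where $E_\alpha$ is the continuous convolution kernel $(z^\alpha + A)^{-1}$ transformed back, and the numerical solution $W^n$ analogously with the discrete kernel generated by $(\delta(\zeta)/h)^\alpha$. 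The error $w(t_n)-W^n$ then admits an integral representation over a Hankel-type contour $\Gamma$ surrounding the spectrum, with integrand involving the difference between $(z^\alpha+A)^{-1}$ and its discrete counterpart.

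Next I would decompose the error by splitting $g$ into its value at the origin plus the remainder: write $g(s) = g(0) + \int_0^s g'(\tau)\,d\tau$, so that the error separates into a term driven by the constant $g(0)$ and a term driven by $g'$. For each piece I would insert the factor $A^{1/2}$ into the contour integral to pass to the $\|\cdot\|_{1/2}$ norm, and use the sectorial resolvent bound $\|A^{1/2}(z^\alpha+A)^{-1}\| \le C |z|^{-\alpha/2}$ valid on the sector. The crucial analytic input is the standard CQ consistency estimate stating that the difference between the continuous symbol $z^\alpha$ and the discrete symbol $(\delta(e^{-zh})/h)^\alpha$ is $O(h^p |z|^{p+\alpha})$ near the real axis, combined with uniform sectoriality far out; this yields an extra factor of $h$ (for BDF($p$) with $p\ge 1$) after integrating over the contour, with the contour scaled appropriately in terms of $t_n$.

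The main obstacle, and the step requiring the most care, will be tracking the precise power of $t_n$ that emerges from the contour integration once the $A^{1/2}$ factor is present. After the substitution $z \mapsto z/t_n$ in the contour integral, the gradient-norm resolvent bound $|z|^{-\alpha/2}$ shifts the exponent of $t_n$ relative to the $L^2$ case by $\alpha/2$, and combined with the single factor of $h$ from consistency and the short-time singular behavior of the solution governed by the regularity assumption \eqref{eqn:RegularityAssumtion}, this should produce exactly the stated rate $h\, t_n^{\alpha/2 - 1}$. I would handle the $g(0)$ term and the $g'$ term separately: the former directly gives the $\|g(0)\|$ contribution, while for the latter I would bound $\int_0^{t_n}\|g'(\tau)\|\,d\tau \le t_n \max_{0\le s \le t_n}\|g_k'(s)\|$, yielding the factor $t\max_s\|g_k'(s)\|$ in \eqref{errorH1_lin_p1}. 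A subtle point is that the near-origin singularity of $g$ must be absorbed by the contour scaling so that no negative power of $h$ survives; verifying that the two boundary pieces of the Hankel contour (the circular arc and the two rays) both respect the claimed estimate uniformly in $n$ is where the delicate splitting of the integration domain—as in the proof of Lemma \ref{lem:ABError}—comes into play.
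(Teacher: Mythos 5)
Your proposal is correct and follows essentially the same route as the paper: both reduce the lemma to observing that $A^{1/2}\left(w - w_h\right)$ is the CQ error for a convolution whose operator-valued kernel has Laplace transform $A^{1/2}(I + s^{-\alpha}A)^{-1}s^{-\alpha}$, bounded by $M|s|^{-\alpha/2}$ on a sector via the resolvent estimate of \cite{cuesta2006convolution}, which is exactly what produces the $t_n^{\frac{\alpha}{2}-1}$ rate. The only difference is that the paper then invokes \cite[Theorem 3.1]{Lu88I} as a black box — that theorem already contains the Hankel-contour analysis and the $h\|g(0)\| + \dots + h^p\left(\|g^{(p-1)}(0)\| + t \max_{0\le s\le t}\|g^{(p)}(s)\|\right)$ error structure, including the order reduction to $O(h)$ caused by $g(0)\neq 0$ — whereas you propose to re-derive that estimate by hand through the $g(0)$-plus-remainder splitting and contour integration, which is a sound but substantially longer path to the same bound.
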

	\begin{proof} 
		By applying Laplace transform techniques $w$ solves
		\[
		w(t) + \partial_t^{-\alpha} A(t) w(t) = \partial_t^{-\alpha} g(t),
		\]
		or equivalently 
		\[
		w(t) = (I + \partial_t^{-\alpha} A)^{-1} \partial_t^{-\alpha} g(t).
		\]
		By the preservation of the composition rule by  CQ schemes, the discretization of \eqref{eqn:linearconstantA}  is equivalent to the scheme
		\[
		w_h(t) = (I + \partial_h^{-\alpha} A)^{-1} \partial_h^{-\alpha} g(t),
		\]
		where $w_h(t)$ denotes the continuous extension in time of the CQ scheme and satisfies $w_h(t_n)=W^n$.
		The error is then
		\[
		w(t)-w_h(t) = \left[(I + \partial_t^{-\alpha} A)^{-1} \partial_t^{-\alpha} -(I + \partial_h^{-\alpha} A)^{-1} \partial_h^{-\alpha} \right] g(t),
		\]
		so that
		\[
		A^{1/2} (w(t)-w_h(t)) = \left[A^{1/2} (I + \partial_t^{-\alpha} A)^{-1} \partial_t^{-\alpha} -A^{1/2} (I + \partial_h^{-\alpha} A)^{-1} \partial_h^{-\alpha} \right] g(t)
		\]
		and $A^{1/2}w_h$ is the CQ approximation of a convolution whose kernel has Laplace transform 
		\[
		G(s)=A^{1/2} (I + s^{-\alpha} A)^{-1} s^{-\alpha}.
		\]
		The resolvent estimates in \cite[Lemma 2.2]{cuesta2006convolution} imply that
		\[
		\|G(s)\|_{L^2\to L^2} \le M|s|^{-\alpha/2}.
		\]
	By \cite[Theorem 3.1]{Lu88I} it follows the error bound 
	\[
	\|w(t)-w_h(t)\|_{1/2} \le C t^{\frac{\alpha}{2} -1} \left[ h\|g(0)\| + \dots +h^{p-1}\|g^{(p-2)}(0)\| + h^p\left( \|g^{(p-1)}(0)\| + t \max_{0\le s\le t} \|g^{(p)}(s)\| \right)   \right].
	\]
	For $p=1$, the error estimate at $t_n$ follows, with $W^n=V_h(t_n)$, 
	\[
	\|w(t_n)-W^n\|_{1/2} \leq C h t_n^{\frac{\alpha}{2} -1} \left( \|g(0)\| + t_n \max_{0\le s\le t_n} \|g'(s)\| \right),
	\]
	which dominates the overall behavior for all $p\geq 1$ unless compatibility conditions $g^{(i)}(0) = 0$ with $i=0,...,p-1$ are satisfied. The proof is complete.
\end{proof}

Next, we address the optimal error bounds in time and space for the discretization of the linear problem with time-dependent coefficients \eqref{eqn:AuxPDElin}. 
\begin{lem}\label{lem:errh1aux}
	Let $v$ be the solution to \eqref{eqn:AuxPDElin} and $V_n$ its numerical approximation at time $t_n$ according to \eqref{eqn:NumericalSchemeA}, that is
	\begin{equation}\label{eqn:NumericalSchemeAlin}
		\begin{cases}
			(\partial_h^\alpha V^n, \chi) + a(\widetilde{D}(t_{n}); V^n, \chi) = (\widetilde{f}(t_n), \chi), & \chi \in V_k, \; n \geq 1, \\
			V^0 = 0, & \\
		\end{cases}
	\end{equation}
	where the Caputo derivative is discretized with the BDF$(p)$-CQ. Then, 
	\begin{equation}\label{errorH1_lin_p1}
		\|\nabla\left(v(t_n)-V^n\right)\| \le C \left( h t_n^{\frac{\alpha}{2} -1} \left( \|g(0)\|+ t_n \max_{\ell=0,1,2} \|g^{(\ell)}\|_{L^{\infty}(0,t_n; L^2({\Omega}))} \right) + k \right).
	\end{equation}
\end{lem}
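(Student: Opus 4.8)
The plan is to separate the spatial and temporal discretization errors and to reduce the time‑dependent‑coefficient problem to the constant‑coefficient situation already settled in Lemma \ref{lem:errh1linconstant}. First I would introduce the spatially semidiscrete solution $v_k(t)\in V_k$ defined by $(\partial_t^\alpha v_k,\chi)+a(\widetilde D(t);v_k,\chi)=(\widetilde f(t),\chi)$ for all $\chi\in V_k$ with $v_k(0)=0$, where $\widetilde f$ is the source denoted $g$ in the statement, and split $v(t_n)-V^n=(v(t_n)-v_k(t_n))+(v_k(t_n)-V^n)$. The first summand is the Galerkin spatial error of the variable‑coefficient fractional problem; using the Ritz projection \eqref{eqn:RitzProjection}, the projection bounds \eqref{eqn:ProjectionErrors} with $p=1$, and the regularity \eqref{eqn:RegularityAssumtion}, its gradient is $O(k)$, which yields the $+k$ term in the assertion. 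All remaining effort concentrates on the purely temporal error $e^n:=v_k(t_n)-V^n$, which lives entirely in $V_k$.

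Second, I would freeze the elliptic operator in time. Writing $A_k(t)$ for the finite element operator associated with $a(\widetilde D(t);\cdot,\cdot)$, I rewrite both the semidiscrete and the fully discrete equations with the fixed, time‑independent sectorial operator $A_k(0)$ on the left and push the time variation into the source, namely $\partial_t^\alpha v_k+A_k(0)v_k=\widetilde f+(A_k(0)-A_k(t))v_k$ and $\partial_h^\alpha V^n+A_k(0)V^n=\widetilde f(t_n)+(A_k(0)-A_k(t_n))V^n$. Discretizing the continuous equation at $t_n$ and subtracting gives the error identity
\[
\partial_h^\alpha e^n+A_k(0)e^n=(A_k(0)-A_k(t_n))e^n+(\partial_h^\alpha-\partial_t^\alpha)v_k(t_n).
\]
Since $D_-\le\widetilde D(0)\le D_+$, the frozen operator $A_k(0)$ is sectorial with resolvent bounds uniform in $k$, so Lemma \ref{lem:errh1linconstant} and the resolvent estimate $\|G(s)\|\le M|s|^{-\alpha/2}$ of \cite[Lemma 2.2]{cuesta2006convolution} apply to it, and the induced norm $\|\cdot\|_{1/2}=\|A_k(0)^{1/2}\cdot\|$ is equivalent to $\|\nabla\cdot\|$. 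Freezing at $t=0$ is the right choice because $A_k(0)-A_k(t)$ vanishes there and grows only like $t$, exactly where the solution is least regular.

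Third, I would represent $e^n$ through the discrete solution operator of $\partial_h^\alpha+A_k(0)$ and estimate its $\|\cdot\|_{1/2}$ norm. Processed exactly as in the proof of Lemma \ref{lem:errh1linconstant}, via the resolvent symbol $G(s)$ and the CQ error bound \cite[Theorem 3.1]{Lu88I}, the genuine truncation contribution $(\partial_h^\alpha-\partial_t^\alpha)v_k(t_n)$ produces the main factor $C\,h\,t_n^{\alpha/2-1}(\dots)$ attached to the effective source $\widetilde f+(A_k(0)-A_k(t))v_k$. Since that bound consumes one time derivative of the source and the perturbation part itself carries the operator difference $A_k(0)-A_k(t)$ together with $v_k$ and $v_k'$, re‑expressing these through the PDE and the regularity estimates $\|\partial_t^{(m)}v(t)\|\le C t^{\alpha-m}$ of \eqref{eqn:RegularityAssumtion} raises the required source regularity by one order relative to the constant‑coefficient case; this is precisely the origin of the factor $\|g(0)\|+t_n\max_{\ell=0,1,2}\|g^{(\ell)}\|_{L^\infty(0,t_n;L^2(\Omega))}$.

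Finally, the perturbation term must be absorbed. Because $(A_k(0)-A_k(t))w=\nabla\cdot\big((\widetilde D(t)-\widetilde D(0))\nabla w\big)$ and $\widetilde D$ is $C^1$ in time, this operator maps $X_{1/2}$ to $X_{-1/2}$ with norm $\le C t$, so its composition with the smoothing operator $A_k(0)^{1/2}(I+s^{-\alpha}A_k(0))^{-1}s^{-\alpha}A_k(0)^{1/2}=I-(I+s^{-\alpha}A_k(0))^{-1}$ stays uniformly bounded and contributes a genuinely lower‑order, $O(t_j)$‑weighted discrete convolution of $\|e^j\|_{1/2}$ against a kernel of fractional‑integral type. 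Closing the resulting Volterra inequality with the discrete fractional Gr\"onwall inequality (Lemma \ref{lem:Gronwall}, equivalently Proposition \ref{prop:GronwallInt}) removes the self‑reference and leaves the claimed bound. The main obstacle is exactly this closure: the convolution structure of the CQ forbids a pointwise freezing of the coefficient, so the time variation survives as a self‑referential source, and one must simultaneously exploit the smoothing of the frozen resolvent in the $\|\cdot\|_{1/2}$ norm and the $O(t)$ decay of the operator perturbation to keep it subordinate and apply the fractional Gr\"onwall without destroying the singular $t_n^{\alpha/2-1}$ rate.
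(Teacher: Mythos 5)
Your proposal is correct and follows essentially the same architecture as the paper's proof: the same splitting $v(t_n)-V^n=(v(t_n)-v_k(t_n))+(v_k(t_n)-V^n)$ through the spatially semidiscrete solution $v_k$, an $O(k)$ bound for the first piece, and, for the second, freezing the elliptic operator at $t=0$ and pushing the time variation into the source, following \cite{JinLiZhou2020}, so that Lemma \ref{lem:errh1linconstant} applies (the paper obtains the $O(k)$ spatial bound from the $L^2$ estimate $\|v(t_n)-v_k(t_n)\|\le Ck^2$ plus an inverse inequality rather than from Ritz-projection bounds, an immaterial difference). The one place you genuinely diverge is the closure of the perturbation argument. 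The paper sets $F_k(t)=P_kg(t)+(A_k(0)-A_k(t))v_k(t)$, which is a \emph{known} source since $v_k$ is the semidiscrete solution, notes that the frozen-coefficient auxiliary problem with this source has exact solution $w=v_k$, bounds $\|F_k'\|$ by citing \cite{JinLiZhou2020}, and applies Lemma \ref{lem:errh1linconstant} to $w$ and its CQ approximation $W^n$; it then asserts that the result follows, leaving implicit that $W^n\neq V^n$ (the actual scheme carries $(A_k(0)-A_k(t_n))V^n$, not $(A_k(0)-A_k(t_n))v_k(t_n)$, on the right-hand side) and delegating that discrepancy to the technique of \cite{JinLiZhou2020}. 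You instead write the error equation for $e^n=v_k(t_n)-V^n$ directly, which makes the perturbation self-referential, $(A_k(0)-A_k(t_n))e^n$, and you close with a discrete fractional Gr\"onwall argument; this is more self-contained and in fact makes explicit precisely the step that the paper's write-up compresses. One caveat: Lemma \ref{lem:Gronwall} and Proposition \ref{prop:GronwallInt} as stated only admit forcing of the form $A(h)+B(h)t_n^{\alpha-1}$, whereas your main term behaves like $h\,t_n^{\alpha/2-1}$, which is more singular near $t=0$; to close the Volterra inequality without degrading this rate you need either a Dixon--McKee-type inequality with a general exponent or to exploit the $O(t_n)$ factor carried by the operator perturbation, as you indicate at the end --- a technical extension of the stated Gr\"onwall lemmas, not an obstruction to the argument.
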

\begin{proof}
	We decompose the above error as follows
	\[
	\|\nabla v(t_n) -\nabla V^n \| \leq \| \nabla v(t^n) - \nabla v_k(t_n)\|+ \|\nabla v_k(t_n) - \nabla V^n\|,
	\]
	for $v_k(t)$ the solution to the semidiscrete problem in space (a system of FDEs in $V_k$)
	\begin{equation}\label{semidiscrete}
		\partial_t^\alpha v_k(t) + A_k(t) v_k(t) = P_k g(t);  \quad v_k(0) = 0, \\
	\end{equation}
	where $A_k(t) : V_k \to V_k$ is the operator defined by 
	\[
	\left(A_k(t) v_k, \chi \right) = \left(a(\tilde{D}(t)); v_k, \chi \right), \quad \forall v_k, \chi \in V_k.
	\]
	By repeating exactly the same energy arguments as in the proof of Theorem \eqref{thm:Auxiliary} but for the time-continuous case, or by taking $h\rightarrow 0^+$ in \eqref{eqn:AuxiliaryError0}, we can show that
	\begin{equation}
		\| v(t_n) - v_k(t_n) \| \leq C k^2.
	\end{equation}
	Now, using the inverse inequality (\cite{Tho07}, formula (1.12))
	\begin{equation}
		\|\nabla \chi\| \leq C k^{-1} \|\chi\|, \quad \chi \in V_k,
	\end{equation}	
	we can estimate
	\[
	\| \nabla v(t_n)-\nabla V^n\| \leq k^{-1} \| v(t_n) - v_k(t_n) \|+ \|\nabla v_k(t_n) - \nabla V^n\| \le  C k + \| \nabla v_k(t_n)-\nabla V^n \|.
	\]
	To derive \eqref{errorH1_lin_p1}, we observe that $v_k$ solves the linear problem with time dependent diffusion operator
	\begin{equation}\label{linearconstantA}
		\partial_t^{\alpha} v_k(t) + A_k(t) v_k(t) = P_k g(t), \quad v_k(0)=0,
	\end{equation}             
	in the (finite dimensional) space $V_k$. By applying the same technique as in \cite{JinLiZhou2020}, we write this problem as 
	\begin{equation}\label{linearconstantA}
		\partial_t^{\alpha} v_k(t) + A_k(0) v_k(t) = P_k g(t) + (A_k(0)-A_k(t))v_k, \quad v_k(0)=0.
	\end{equation}   
	We set $F_k(t)=P_k g(t) + (A_k(0)-A_k(t))v_k$ and consider the auxiliary problem 
	\begin{equation}\label{aux2}
		\partial_t^{\alpha} w(t) + A_k(0)w(t) = F_k(t), \quad w(0)=0,
	\end{equation}   
	whose solution satisfies $w=v_k$. From the proof of \cite[Lemma 9]{JinLiZhou2020} and \cite[Theorem 2]{JinLiZhou2020}, we have
	\[
	\|F_k'(t)\|_{L^2} \le (a_0+a_1 t)\|P_kg(0)\| + (b_0 +b_1t)t \max_{0\le s\le t}\|P_k g'(s)\| 
	+ c_0 t^4 \max_{0\le s\le t}\|P_kg''(s)\|.
	\]
	Thus, by Lemma~\ref{lem:errh1linconstant}, we can estimate 
	\[
	\|w(t_n)-W^n\|_{1/2} \leq C h t_n^{\frac{\alpha}{2} -1} \left( \|P_kg(0)\| + t_n \max_{0\le s\le t_n} \|F_k'(s)\| \right),
	\]
	and the result follows.		
\end{proof}

\begin{cor}\label{cor:GradientBound}
	Let the assumptions of Theorem \ref{thm:Auxiliary} for equation \eqref{eqn:AuxPDElin}. If additionally, $\|\nabla v(t)\|_\infty < \infty$ and $h k^{-1} \leq t_n^{1-\alpha/2}$, then we have 
	\begin{equation}
		\|\nabla V^n\|_\infty \leq C.
	\end{equation}
\end{cor}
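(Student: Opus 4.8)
The plan is to split the gradient of the numerical solution into a finite-element remainder, to which an inverse estimate can be applied, plus the gradient of the Ritz projection of the exact solution $v$, which is already bounded by \eqref{eqn:RitzBounded}. First I would set $\eta^n := V^n - R_k v(t_n) \in V_k$ and write $\|\nabla V^n\|_\infty \le \|\nabla \eta^n\|_\infty + \|\nabla R_k v(t_n)\|_\infty$. Since $\|\nabla v(t)\|_\infty < \infty$ is assumed, the second summand is controlled by a constant directly through \eqref{eqn:RitzBounded}, so the whole problem reduces to estimating the $L^\infty$ norm of the gradient of the finite-element function $\eta^n$.

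Next I would exploit that $\eta^n$ belongs to the finite-dimensional space $V_k$, on which an inverse inequality of the form $\|\nabla \chi\|_\infty \le C k^{-d/2}\|\nabla \chi\|$ holds (the piecewise-constant gradient of a $P_1$ function satisfies this element-wise; this is the $L^\infty$ counterpart of the $L^2$ inverse estimate $\|\nabla \chi\| \le C k^{-1}\|\chi\|$ already used in the proof of Lemma \ref{lem:errh1aux}, and in the relevant low-dimensional case it reduces to a factor $k^{-1}$). This transfers the task to bounding the $L^2$ gradient norm $\|\nabla \eta^n\|$. By a further triangle inequality $\|\nabla \eta^n\| \le \|\nabla(v(t_n)-V^n)\| + \|\nabla(v(t_n)-R_k v(t_n))\|$, where the first term is exactly the quantity controlled by Lemma \ref{lem:errh1aux} and the second is the Ritz projection error, bounded by $C k \|v(t_n)\|_2$ via \eqref{eqn:ProjectionErrors} with $p=1$. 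Combining these gives $\|\nabla \eta^n\| \le C(h\, t_n^{\alpha/2 - 1} + k)$.

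The final step is to invoke the hypothesis $h k^{-1} \le t_n^{1-\alpha/2}$, which is precisely equivalent to $h\, t_n^{\alpha/2 - 1} \le k$. This absorbs the singular time-discretization contribution into the spatial term and yields $\|\nabla \eta^n\| \le C k$. Feeding this into the inverse estimate produces $\|\nabla \eta^n\|_\infty \le C k^{-1}\cdot C k = C$, and adding the bound on the Ritz term gives $\|\nabla V^n\|_\infty \le C$, as claimed.

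The main obstacle is the balancing carried out in this last step: the inverse inequality necessarily injects a negative power of the mesh size $k$, whereas the only available gradient error estimate carries the factor $h\, t_n^{\alpha/2 - 1}$, which blows up as $t_n \to 0^+$. The mesh-ratio condition $h k^{-1} \le t_n^{1-\alpha/2}$ is engineered to counteract both effects at once, trading the temporal singularity against the spatial resolution so that the product remains $O(1)$. Secondary points to verify are that the constant in Lemma \ref{lem:errh1aux} is uniform in $n$ (which holds since $t_n \le T$ and the data norms of the source are finite) and that the dimension dependence of the inverse estimate is consistent with the $k^{-1}$ scaling used here.
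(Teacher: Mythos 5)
Your proof is correct and rests on the same three pillars as the paper's: the $L^2\to L^\infty$ inverse estimate on $V_k$, the $H^1$ error bound of Lemma \ref{lem:errh1aux}, and the mesh-ratio condition $h k^{-1} \leq t_n^{1-\alpha/2}$ used to cancel the singular factor $h\,t_n^{\alpha/2-1}$ against $k$. The one genuine difference is where the inverse inequality is applied: the paper estimates $\|\nabla V^n - \nabla v(t_n)\|_\infty \leq C k^{-1}\|\nabla V^n - \nabla v(t_n)\|$ directly, even though $V^n - v(t_n)$ does not belong to $V_k$, which is a (repairable) abuse of the inverse estimate; you instead insert the Ritz projection, apply the inverse inequality to $\eta^n = V^n - R_k v(t_n) \in V_k$, where it is legitimately available, and control the two extra pieces by the Ritz gradient stability \eqref{eqn:RitzBounded} and the Ritz $H^1$ error bound $\|\nabla(v-R_k v)\| \leq C k \|v\|_2$ from \eqref{eqn:ProjectionErrors} --- both of which are stated in the paper, so your detour costs nothing and makes the argument cleaner. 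Your closing caveat about the dimension dependence of the inverse estimate ($k^{-d/2}$ versus the $k^{-1}$ actually used) applies equally to the paper's proof, which cites Thom\'ee's inequality with the factor $k^{-1}$; this is consistent for $d \leq 2$ (and, as the paper's subsequent remark notes, in $d=1$ the sharper factor $k^{-1/2}$ even allows the mesh condition to be relaxed), so neither proof is harmed, but the restriction is worth keeping in mind for $d \geq 3$.
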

\begin{proof}
	By the inverse inequality (\cite{Tho07}, formula (13.11))
	\begin{equation}\label{eqn:InverseInequalities}
		\|\nabla \chi\|_\infty \leq C k^{-1} \|\nabla \chi\|, \quad \chi \in V_k,
	\end{equation}
	we can estimate
	\begin{equation}\label{eqn:MaxNormEst}
		\|\nabla V^n\|_\infty \leq \|\nabla v(t_n)\|_\infty + \|\nabla V^n - \nabla v(t_n)\|_\infty \leq \|\nabla v(t_n)\|_\infty + C k^{-1} \|\nabla V^n - \nabla v(t_n)\|.
	\end{equation}
	Now, due to Lemma \ref{lem:errh1aux} we have
	\begin{equation}
		\|\nabla V^n - \nabla v(t_n)\| \leq C \left(h t_n^{\frac{\alpha}{2}-1} + k\right).
	\end{equation}
	Therefore, by the assumption $\|\nabla v\|_\infty < \infty$, we have
	\begin{equation}
		\|\nabla V^n\|_\infty \leq C \left(1 + h k^{-1} t_n^{\frac{\alpha}{2}-1} + 1\right),
	\end{equation}
	which by the restriction on the discretization, that is $h k^{-1} \leq t_n^{1-\frac{\alpha}{2}}$, concludes the proof.

\end{proof}

\begin{rem}
	In the case of the one-dimensional domain $\Omega \subset \mathbb{R}$, the restriction of the time step can be relaxed. That is, the inverse inequality \cite[(4.5.11) Theorem]{brenner2008mathematical}
	\begin{equation}
		\|\nabla \chi\|_\infty \leq C k^{-\frac{1}{2}} \|\nabla \chi\|, \quad \chi \in V_k,
	\end{equation}
	introduces in \eqref{eqn:MaxNormEst} the prefactor $k^{-1/2}$. Hence, we only require \begin{equation} \label{cfld1}
		h k^{-1/2} \leq t_n^{1-\alpha/2}.
	\end{equation}
\end{rem}

Now, we have all ingredients to proceed to our main result. 

\begin{thm}\label{thm:Convergence}
	Let $u$ is the solution of \eqref{eqn:MainPDEWeak} of regularity \eqref{eqn:RegularityAssumtion} and $U^n$ its numerical approximation given by \eqref{eqn:NumericalScheme} with the Caputo derivative discretized with the BDF$(p)$-CQ. If the following condition is satisfied, 
	\begin{equation}\label{cfl}
		h k^{-1} \leq t_n^{1-\alpha/2},
	\end{equation}
	then, for sufficiently small $h>0$, the error of the scheme satisfies
	\begin{equation}\label{eqn:ConvergenceError}
		\|U^n-u(t_n)\| \leq C
		\begin{cases}
			B(h)\max\{t_n^{\alpha-1}, t_n^\frac{\alpha-1}{2}\} +k^2 , & \alpha > \frac{1}{2} \\
			B(h)(t_n^{\alpha-1}+(\ln h^{-1})^{1/2} t_n^{\frac{\alpha-1}{2}}) + k^2, & \alpha = \frac{1}{2} \\
			B(h)(t_n^{\alpha-1}+h^{\alpha-1/2} t_n^{\frac{\alpha-1}{2}}) + k^2, & \alpha < \frac{1}{2} \\
		\end{cases},
	\end{equation}
	for some $C=C(\alpha, T, u)>0$ with
	\begin{equation}
		B(h) = 
		\begin{cases}
			h \ln(h^{-1}), & p = 1, \\
			h, & p \geq 2. \\
		\end{cases}
	\end{equation}
\end{thm}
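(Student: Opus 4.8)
The plan is to reduce the quasilinear problem to the intermediate linear problem \eqref{eqn:AuxPDElin}, whose exact solution is $u$ itself: feeding the exact solution into the coefficients $\widetilde D$ and $\widetilde f$ makes $v=u$. First I would split
\[
U^n - u(t_n) = \big(U^n - V^n\big) + \big(V^n - u(t_n)\big) =: \eta^n + \sigma^n ,
\]
where $V^n$ solves the linear scheme \eqref{eqn:NumericalSchemeA}. The term $\sigma^n = V^n - u(t_n)$ is the error of a time-dependent but solution-independent linear problem, hence already controlled by Theorem \ref{thm:Auxiliary} together with Lemma \ref{lem:ABError}, giving $\|\sigma^n\| \le C(B(h)\, t_n^{\alpha-1} + k^2)$. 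Everything then rests on estimating $\eta^n \in V_k$.

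Next I would subtract the fully discrete equations \eqref{eqn:NumericalScheme} and \eqref{eqn:NumericalSchemeA}. Writing $a(D(t_n,U^{n-1});U^n,\chi) - a(\widetilde D(t_n);V^n,\chi) = a(D(t_n,U^{n-1});\eta^n,\chi) + a(D(t_n,U^{n-1}) - \widetilde D(t_n); V^n,\chi)$ isolates the coercive part from a perturbation in the diffusion coefficient, while the source contributes $(f(t_n,U^{n-1}) - \widetilde f(t_n),\chi)$. Testing with $\chi=\eta^n$, applying Proposition \ref{prop:CQInner} to the convolution-quadrature term and positive definiteness of $a$, I obtain
\[
\tfrac12\partial_h^\alpha\|\eta^n\|^2 + D_-\|\nabla\eta^n\|^2 \le \big|a(D(t_n,U^{n-1}) - \widetilde D(t_n); V^n,\eta^n)\big| + \big|(f(t_n,U^{n-1}) - \widetilde f(t_n),\eta^n)\big| .
\]
The crucial term is the diffusion perturbation: using the Lipschitz bound \eqref{eqn:Assumptions} and, decisively, the uniform gradient bound $\|\nabla V^n\|_\infty \le C$ from Corollary \ref{cor:GradientBound} (valid precisely under the CFL restriction \eqref{cfl}), it is dominated by $CL\,\|U^{n-1}-u(t_n)\|\,\|\nabla\eta^n\|$. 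A Young inequality then absorbs $\tfrac{D_-}{2}\|\nabla\eta^n\|^2$ into the coercive term at the price of a \emph{squared} factor $\|U^{n-1}-u(t_n)\|^2$; the Lipschitz source term is treated identically. Decomposing $U^{n-1}-u(t_n) = \eta^{n-1} + \sigma^{n-1} + (u(t_{n-1})-u(t_n))$ and bounding the time increment by the regularity estimate $\|\partial_t u(s)\|\le C(u)\,s^{\alpha-1}$ from \eqref{eqn:RegularityAssumtion} yields a discrete inequality of the type required by Lemma \ref{lem:Gronwall}, now for $z^n := \|\eta^n\|^2$:
\[
\partial_h^\alpha z^n \le \lambda_0 z^n + \lambda_1 z^{n-1} + F^n , \qquad F^n \le C\big(B(h)^2\, t_{n-1}^{2(\alpha-1)} + k^4\big) .
\]

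The final step is to apply the discrete Grönwall inequality, Lemma \ref{lem:Gronwall}, to $z^n$, which requires bounding the discrete fractional integral $h^\alpha\sum_{j}(n-j)^{\alpha-1}F^{j+1}$ in the form $A(h)+B_*(h)\,t_n^{\alpha-1}$. I expect this to be the main obstacle, because the singular factor $t_j^{2(\alpha-1)}$ is twice as strong as the one treated in Lemma \ref{lem:ABError}, so the associated Beta-type sum $\sum_j(n-j)^{\alpha-1}j^{2\alpha-2}$ converges only for $\alpha>1/2$. A careful Riemann-sum analysis produces exactly the three regimes of \eqref{eqn:ConvergenceError}: for $\alpha>1/2$ the sum behaves like $t_n^{3\alpha-2}\le C\,t_n^{\alpha-1}$, for $\alpha=1/2$ a harmonic sum introduces a logarithmic factor, and for $\alpha<1/2$ the sum concentrates near the origin and carries an extra $h^{2\alpha-1}$. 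Grönwall then bounds $z^n=\|\eta^n\|^2$ by the corresponding expression, and taking square roots converts $t_n^{\alpha-1}$ into $t_n^{(\alpha-1)/2}$ and $B(h)^2$ into $B(h)$, which is the origin of the half-order temporal factors. Adding back $\|\sigma^n\|\le C(B(h)\,t_n^{\alpha-1}+k^2)$ and collecting the dominant contributions delivers \eqref{eqn:ConvergenceError}.
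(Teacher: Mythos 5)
Your proposal follows the paper's own proof essentially step for step: the same splitting via the intermediate linear problem \eqref{eqn:AuxPDElin} (whose solution is indeed $u$), the same error equation with the diffusion-perturbation term controlled by Corollary \ref{cor:GradientBound} under the CFL condition \eqref{cfl}, the same Young-inequality absorption forcing the squared error, and the same three-regime estimate of the discrete fractional integral of $t_j^{2(\alpha-1)}$ before applying Lemma \ref{lem:Gronwall} and taking square roots. The argument and the key lemmas invoked coincide with the paper's, so the proposal is correct.
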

\begin{proof}
	The idea is to make the following decomposition
	\begin{equation}\label{eqn:ErrorDecomposition}
		\|U^n - u(t_n)\| \leq \|U^n - V^n\| + \|V^n - u(t_n)\| =: \|e^n\| + \|\eta^n\|,
	\end{equation}
	where $V^n$ is the numerical approximation of $u$ defined in \eqref{eqn:NumericalSchemeA} with the diffusivity and the source chosen according to 
	\begin{equation}\label{eqn:DiffusivitySource}
		\widetilde{D}(t)=D(t,u(t)), \qquad \widetilde{f}(t)=f(t,u(t)). 
	\end{equation}
	Note that the bound for $\|\eta^n\|$ is given by Theorem~\ref{thm:Auxiliary} and Lemma \ref{lem:ABError} as
	\begin{equation}\label{eqn:EtaBound}
		\|\eta^n\| \leq C (B(h) t_n^{\alpha-1} + k^2),
	\end{equation} 
	with $B(h)$ given by \eqref{eqn:Bh}. We will find an estimate for $e^n$ by writing an appropriate error equation. By subtracting \eqref{eqn:NumericalSchemeA} from \eqref{eqn:NumericalScheme} we obtain
	\begin{equation}
		\begin{split}
			(\partial_h^\alpha e^n, \chi) &+ a(D(t_n, U^{n-1}); e^n, \chi) + a(D(t_n, U^{n-1})-D(t_n, u(t_n)); V^n, \chi) \\
			&= (f(t_n, U^{n-1}) - f(t_n, u(t_n)), \chi).
		\end{split}
	\end{equation}
	Hence, putting $\chi = e^n \in V_k$ and using the bound for $D$ along with positive-definiteness of $a$ we have
	\begin{equation}
		(\partial_h^\alpha e^n, \chi) + D_- \|\nabla e^n\|^2 \leq - a(D(t_n, U^{n-1})-D(t_n, u(t_n)); V^n, e^n) + (f(t_n, U^{n-1}) - f(t_n, u(t_n)), e^n). 
	\end{equation}
	Using the definition of the $a$-form \eqref{eqn:aForm} and the Lipschitz regularity of the diffusivity \eqref{eqn:Assumptions} we can estimate as follows using the Cauchy-Schwarz inequality
	\begin{equation}
		a(D(t_n, U^{n-1})-D(t_n, u(t_n)); V^n, e^n) \leq \|\nabla V^n\|_\infty \| U^{n-1} - u(t_n) \| \|\nabla e^n\|.
	\end{equation}
	Furthermore, by the Lipschitz assumption \eqref{eqn:Assumptions} on $f$,
	\begin{equation}
		(\partial_h^\alpha e^n, e^n) + D_- \|\nabla e^n\|^2 \leq C\left(\|\nabla V^n\|_\infty \|U^{n-1} - u(t_n)\|\|\nabla e^n\| + \|U^{n-1} - u(t_n)\|\|e^n\|\right).
	\end{equation}
	Due to Corollary \ref{cor:GradientBound} we have that $\|\nabla V^n\|_\infty \leq C$ and by using Poincar\'e-Friedrichs inequality we can bound $\|e_n\| \le C \|\nabla e_n\|$. Thus, we further obtain
	\begin{equation}
		(\partial_h^\alpha e^n, e^n) + D_- \|\nabla e^n\|^2 \leq C \|U^{n-1} - u(t_n)\| \|\nabla e^n\| \leq D_- \|\nabla e^n\|^2 + C \|U^{n-1} - u(t_n)\|^2,
	\end{equation}
	where we used the $\epsilon$-Cauchy inequality $ab \le \frac{\veps}{2} a^2 + \frac{1}{2\veps} b^2$, with an appropriate choice of $\veps$, to extract the the gradient term. Finally, by our Proposition \ref{prop:CQInner} 
	\begin{equation}
		\partial^\alpha_h \|e^n\|^2 \leq C \|U^{n-1} - u(t_n)\|^2 \leq C \left(\|U^{n-1} - V^{n-1}\| + \|V^{n-1} - u(t_{n-1})\| + \|u(t_{n-1}) - u(t_n)\|\right)^2.
	\end{equation}
	The first term on the right-hand side is the error $e^{n-1}$, the second one is $\eta^{n-1}$ and can be estimated by Theorem~\ref{thm:Auxiliary}, while the third one follows from the regularity assumption \eqref{eqn:RegularityAssumtion} 
	\begin{equation}\label{eqn:ConvergenceError0}
		\|u(t_{n-1}) - u(t_n)\| \leq C(u) t_n^{\alpha-1} h, \quad n\geq 1.
	\end{equation}
	Therefore, by the elementary inequality $(a+b)^2 \leq 2(a^2 + b^2)$ we have
	\begin{equation}\label{eqn:ErrorBound}
		\partial^\alpha_h \|e^n\|^2 \leq C \left(\|e^{n-1}\|^2 + B^2(h) t_n^{2(\alpha-1)} + k^4 + h^{2} t_n^{2(\alpha-1)} \right),
	\end{equation}
	and we are ready to apply the discrete Gr\"onwall inequality. The discrete fractional integral of $t_n^{2(\alpha-1)}$ can be bounded by, see for instance \cite[Lemma 5.3]{Lu88I} and proof of Lemma~\ref{lem:ABError}, 
	\begin{equation}\label{dfitnpow}
		h^{\alpha} \sum_{j=0}^{n-1} (n-j)^{\alpha-1} t_{j+1}^{2(\alpha-1)} = h^{3\alpha-2} \sum_{j=0}^{n-1} (n-j)^{\alpha-1} (j+1)^{2(\alpha-1)} \le C \left\{ \begin{array}{ll}  h^{3\alpha-2} n^{3\alpha-2}, \quad & \alpha >\frac{1}{2} \vspace{4pt}\\
			h^{\alpha-1} n^{\alpha-1} \ln n,  \quad & \alpha= \frac{1}{2}
			\vspace{4pt}\\
			h^{3\alpha-2} n^{\alpha-1}, \quad & \alpha < \frac{1}{2} 
		\end{array}\right..
	\end{equation}
	Therefore,
	\[
	h^{\alpha} \sum_{j=0}^{n-1} (n-j)^{\alpha-1} t_{j+1}^{2(\alpha-1)} \le C \left\{ \begin{array}{ll}  t_n^{3\alpha-2} \le  T^{2\alpha-1}  t_n^{\alpha-1}, \quad & \alpha >\frac{1}{2} \vspace{4pt}\\
		\ln h^{-1} t_n^{\alpha-1} ,  \quad & \alpha= \frac{1}{2}
		\vspace{4pt}\\
		h^{2\alpha-1} t_n^{\alpha-1}, \quad & \alpha < \frac{1}{2} 
	\end{array}\right.
	\]
	This leads to the application of Lemma \ref{lem:Gronwall} to $\|e^n\|^2$ with $\tilde{A}(h)=k^4$ and
	\[
	\tilde{B}(h) = \left\{\begin{array}{ll} (B(h)^2+h^2) , \quad & \alpha >\frac{1}{2} \vspace{4pt}\\
		(B(h)^2+h^2)  \ln h^{-1} ,   \quad & \alpha= \frac{1}{2}
		\vspace{4pt}\\
		(B(h)^2+h^2)  h^{2\alpha-1} , \quad & \alpha < \frac{1}{2} 
	\end{array}\right..
	\]
	Then $\|e^n\|^2 \le C(\tilde{A}(h)+\tilde{B}(h)t_n^{\alpha-1})$, or
	\[
	\|e^n\| \le  C \left\{ \begin{array}{ll} (B(h) + h) t_n^{(\alpha-1)/2} +k^2 , \quad & \alpha >\frac{1}{2} \\
		(B(h)+h)  (\ln h^{-1})^{1/2} t_n^{(\alpha-1)/2} + k^2 ,  \quad & \alpha= \frac{1}{2}
		\\
		(B(h)+h) h^{\alpha-1/2} t_n^{(\alpha-1)/2} + k^2, \quad & \alpha < \frac{1}{2} 
	\end{array}\right..
	\]
	We have thus obtained the bound for the term $\|e^n\|$ in the error decomposition \eqref{eqn:ErrorDecomposition} which, by recalling the auxiliary estimate \eqref{eqn:EtaBound} finishes the proof. 
\end{proof}

\begin{rem}
	Estimates \eqref{eqn:ConvergenceError} of Theorem \eqref{thm:Convergence} are \emph{globally optimal} in time for all values of $0<\alpha<1$ in the sense that they reduce to the known results for linear equations near $t_n \approx 0$ (the worst case). Moreover, these estimates are also \emph{pointwise optimal} in time for $\alpha > 1/2$ since for a fixed time $t_n > 0$ we obtain an estimate $O(h)$ as $h\rightarrow 0$ (with a logarithmic correction for $p = 1$). This, again apart from the logarithmic term, holds true for $\alpha=1/2$. For the case $\alpha<1/2$ we obtain a pointwise estimate $O(h^{\alpha+1/2})$ as $h\rightarrow 0$ which is still better than the global order $\alpha$. As is well-known, the lower regularity of the solution at the origin causes loss of accuracy of the numerical schemes. Hence, choosing a CQ based on BDF$(p)$ with $p=1,2$ is reasonable in practice. 
	
	As mentioned above, the bounds for the quasilinear problem are optimal only in the case $\alpha \geq 1/2$. The reason for this is the specific proof technique in which we estimate the square of the error as in \eqref{eqn:ErrorBound}. This arises due to interplay between the nature of the energy method, the solution-dependent diffusion coefficient, and a subtle way in which the truncation error behaves. Consequently, we have to use the $\epsilon$-Cauchy inequality to cancel the gradient term leading to the square estimate of the error and the estimates of the sum \eqref{dfitnpow}. This phenomenon is not present in the semilinear case. We note that the lack of the optimal bounds for $\alpha < 1/2$ is the effect of the proof technique and using a different approach would probably give pointwise optimal estimates for all $0<\alpha<1$ (compare the operator-perturbation technique of \cite{JinQuanWohlZhou24} used under some more demanding regularity assumptions than these in the present paper). However, the advantage of using the energy method provides a self-contained proof under relatively weak regularity assumptions. 
\end{rem}

\section{Fast and oblivious implementation}
To implement our numerical scheme \eqref{eqn:NumericalScheme} , we fix a basis of the $V_k$ space and expand the solution $U^n$, that is
\begin{equation}
	U^n = \sum_{k=1}^M y_k^n \Phi_k.
\end{equation}

Taking $\chi = \Phi_j$, denoting $\textbf{y}^n = (y_1^n,...,y_M^n)$, and plugging the above into \eqref{eqn:NumericalScheme} we obtain the following
\begin{equation}
	B \partial_h^\alpha \textbf{y}^n + A(\textbf{y}^{n-1}) \textbf{y}^n = \textbf{f}^n(\textbf{y}^{n-1}),
\end{equation}
where the mass matrix $B = \left\{B_{ij}\right\}_{i,j=1}^M$, the stiffness matrix $A = \left\{A_{ij}\right\}_{i,j=1}^M$, and the load vector $\textbf{f}^n=\left\{f_i^n\right\}_{i=1}^M$ are defined by
\begin{equation}
	B_{ij} = (\Phi_i, \Phi_j), \quad A_{ij}(\textbf{y}^{n-1}) = a\left(D\left(\sum_{k=1}^M y_k^{n-1} \Phi_k\right); \Phi_i,\Phi_j\right), \quad f_i^{n-1} = \left(f\left(t_n, \sum_{k=1}^M y_k^{n-1} \Phi_k\right), \Phi_i\right).
\end{equation}
Finally, we discretize the Caputo derivative according to the CQ scheme \eqref{eqn:CQCaputo} to arrive at a \emph{linear system} of algebraic equations
\begin{equation}
	\label{eqn:GalerkinImplementation}
	\left(w_0 B + A(\textbf{y}^{n-1})\right) \textbf{y}^n = -B\sum_{j=0}^{n-1} w_{n-j} \textbf{y}^j + \textbf{f}^n(\textbf{y}^{n-1}),
\end{equation}
which clearly indicates the non locality in time: the right-hand side depends on the historical values of the solution $\textbf{y}^i$ for $i=0,1,...,n-1$. As a simple example of the basis, in one spatial dimension we can have $\Omega = (0,1)$ for which we can take the usual tent functions
\begin{equation}
	\Phi_j(x) = 
	\begin{cases}
		1-\left|\dfrac{x-x_j}{k}\right|, & |x-x_j|\leq k,\\
		0, & |x-x_j| > k.\\
	\end{cases}
\end{equation}
However, we do not implement directly \eqref{eqn:GalerkinImplementation}. Instead, we apply the fast algorithm developed in \cite{BanLo19} for the evaluation of the fractional integral, in order to deal with memory more efficiently and reduce computational cost. To do this, we use the preservation of the composition rule by all CQ schemes, which implies
\[
\partial_h^\alpha \textbf{y}^n  = \partial_h^{\alpha-1} \partial_h^{1} \textbf{y}^n. 
\]
In the case of the Euler based CQ, this yields
\[
\partial_h^\alpha \textbf{y}^n  = \partial_h^{\alpha-1} \textbf{v}^n, \quad \mbox{with } \quad  \textbf{v}^n:=\frac{\textbf{y}^n-\textbf{y}^{n-1}}{h}. 
\]
Setting $\widetilde{\omega}_j$ the Euler based CQ weights associated with the fractional integral $\partial_h^{\alpha-1}$, for $j=0,\dots, N$, we obtain from \eqref{eqn:CQCaputo} 
\begin{equation}
	\label{eqn:fastCQscheme}
	B \sum_{j=0}^{n} \widetilde{\omega}_{n-j} \textbf{v}_j  + A(\textbf{y}^{n-1}) \textbf{y}^n = \textbf{f}^n(\textbf{y}^{n-1}),
\end{equation}
which leads to the linear system
\begin{equation}\label{linsys}
	\left( \frac{1}{h}B\widetilde{\omega}_0 + A(\textbf{y}^{n-1}) \right) \textbf{y}^{n} = \frac{1}{h} \widetilde{\omega}_0 B \textbf{y}^{n-1} + \textbf{f}^n(\textbf{y}^{n-1})- B\sum_{j=0}^{n-1} \widetilde{\omega}_{n-j} \textbf{v}_j. 
\end{equation}
The computation of the memory term on the right-hand side requires in principle the precomputation and storage of all the CQ weights $\widetilde{\omega}_j$, $j=0,\dots,N$, the storage of all vectors $\textbf{v}_j$, with $j=0,\dots,n-1$, and $2n$ operations per time step, implying a total complexity growing like $O(N^2)$, as $N\to \infty$. The algorithm in \cite{BanLo19} uses a special integral representation and quadrature to compute the CQ weights within a prescribed precision $\varepsilon$ and manages to reduce the complexity to $O(N\log{N}\log(\frac{1}{\varepsilon}))$. Moreover, if we are only interested in the solution at the final time $T$, the memory in the evaluation of the right-hand side grows like $O(\log{N}\log(\frac{1}{\varepsilon}))$, since the algorithm does not need to store the entire history $\textbf{v}_j$, for all $j< n$, and all the CQ weights. Instead, for a moderate value of $n_0$, such as $n_0=4,5$, only the first $n_0+1$ CQ weights $ \widetilde{\omega}_{j}$ and the last $n_0+2$ values of $\textbf{v}_{n-j}$, with $0\le j \le n_0+2$, are required in storage, together with $O(\log{N}\log(\frac{1}{\varepsilon}))$ linear combinations of the history $\textbf{v}_{n-j}$, with $j>n_0+1$. Although the main part of the computational cost of the method \eqref{linsys} lays in the assembly of the stiffness matrix $A(\textbf{y}^{n-1})$ at every time step, we can see from the results reported in the next Section that the application of the algorithm in \cite{BanLo19} is advantageous from the complexity point of view. CPU times are globally reduced by a factor of almost two, as shown in Figure~\ref{fig:TimeRatio}. Since in the present paper we are mostly interested in the verification of our error estimates, both pointwise and uniform in time, we compute and store the numerical solution $y^n$ for every $1\le n \le N$ and do not use an oblivious version of the algorithm. 

\section{Numerical examples}
We will illustrate our results with some numerical examples. For illustration purposes, we take $\Omega = (0,1)$ as our spatial domain and focus only on the temporal features of the solved solution. The discretization in space has been implemented as the finite element method described in the previous section. The grid has been taken to be sufficiently fine to be able to neglect all errors of spatial nature when compared to temporal discretization. The discretization of the fractional derivative is done using BDF$(p)$-CQ with $p = 1,2$.  

We consider two exemplary problems: one will be chosen artificially in order to have an exact solution
\begin{equation}\label{eqn:NumExampleExact}
	\begin{split}
		D(x,t,u) &= e^{-u}, \quad f(x,t,u) = e^{-u}t^\alpha \left(2+t^\alpha\left(1-2x\right)^2\right)+\Gamma (1+\alpha ) x(1-x), \\ 
		u(x,t) &= t^\alpha x(1-x), 
	\end{split}
\end{equation}
which models the typical solution's behavior at $t=0$. The second example is a more realistic model of a porous medium with a simple concentrated source. The diffusivity in porous media strongly depends on the moisture content (for example, see \cite{gardner1959solutions}) and exponential model is one of the typical choices
\begin{equation}\label{eqn:NumExampleRealistic}
	D(x,t,u) = e^{-u}, \quad f(x,t,u) = \frac{1}{\sqrt{4\pi \delta}} e^{-\frac{(x-x_0)^2}{4\delta}}, \quad u(x,0) = 0.
\end{equation}
The parameters $k$, $x_0$, and $\delta$ are chosen accordingly for a particular simulation. Note that in this case, we do not possess an exact analytic solution.

In the first example \eqref{eqn:NumExampleExact} we can compare the numerical solution with the exact one and compute the error at the first step $t=h$ and the final simulation time $t = T$, that is, for each $\alpha$ we find $
\|u(t_n) - U^n\|$, where $t_n = h$ and $t_n = T$. These choices are made in order to investigate the global and pointwise error estimates.  To eliminate the pollution of the spatial discretization error, we performed the experiments with a rather fine spatial grid, with 100 degrees of freedom. The results of our calculations are presented in Figs. \ref{fig:errorExact} and \ref{fig:errorExacth}. As we can see, numerical computations verify that the scheme is convergent even for the nonsmooth in time case. The observed order of convergence, measured at $t=T$ is  consistent with $1$ for all $\alpha$ with a slight deviation for its small values.  The reason for this might be that for $\alpha$ close to $0$ the time evolution is very slow and it is difficult to assess the error of the scheme. This behavior does not contradict our theory, which predicts the order of convergence 0.8 and 0.6 for $\alpha=0.3$ and $\alpha=0.1$, respectively.  Also, due to the lack of sufficient smoothness of the solution, BDF2-CQ performs essentially the same way. Therefore, there is no real gain in using higher-order quadrature for our scheme. We can conclude that numerical computations verify our theoretical findings of Theorem \ref{thm:Convergence}.

For the error measured at $t_1 = h$ (see Fig. \ref{fig:errorExacth}) we observe at most order of convergence $\alpha$. We notice that we are almost always violating the CFL condition \eqref{cfld1} at $t_1$, since the spatial discretization is set with $k=1/40$ and thus there is no contradiction with Theorem~\ref{thm:Convergence}.  Calculating the tangent of the error lines indicates that we observe convergence of order between $\alpha$ and $\alpha/2$. The same conclusion is true for BDF2-CQ, where we cannot hope for higher-order accuracy due to the limited regularity of the solution and the semi-implicit character of our scheme, both in the diffusion coefficient and the source. 

\begin{figure}
	\centering
	\includegraphics[trim = 150 270 100 270, scale = 0.72]{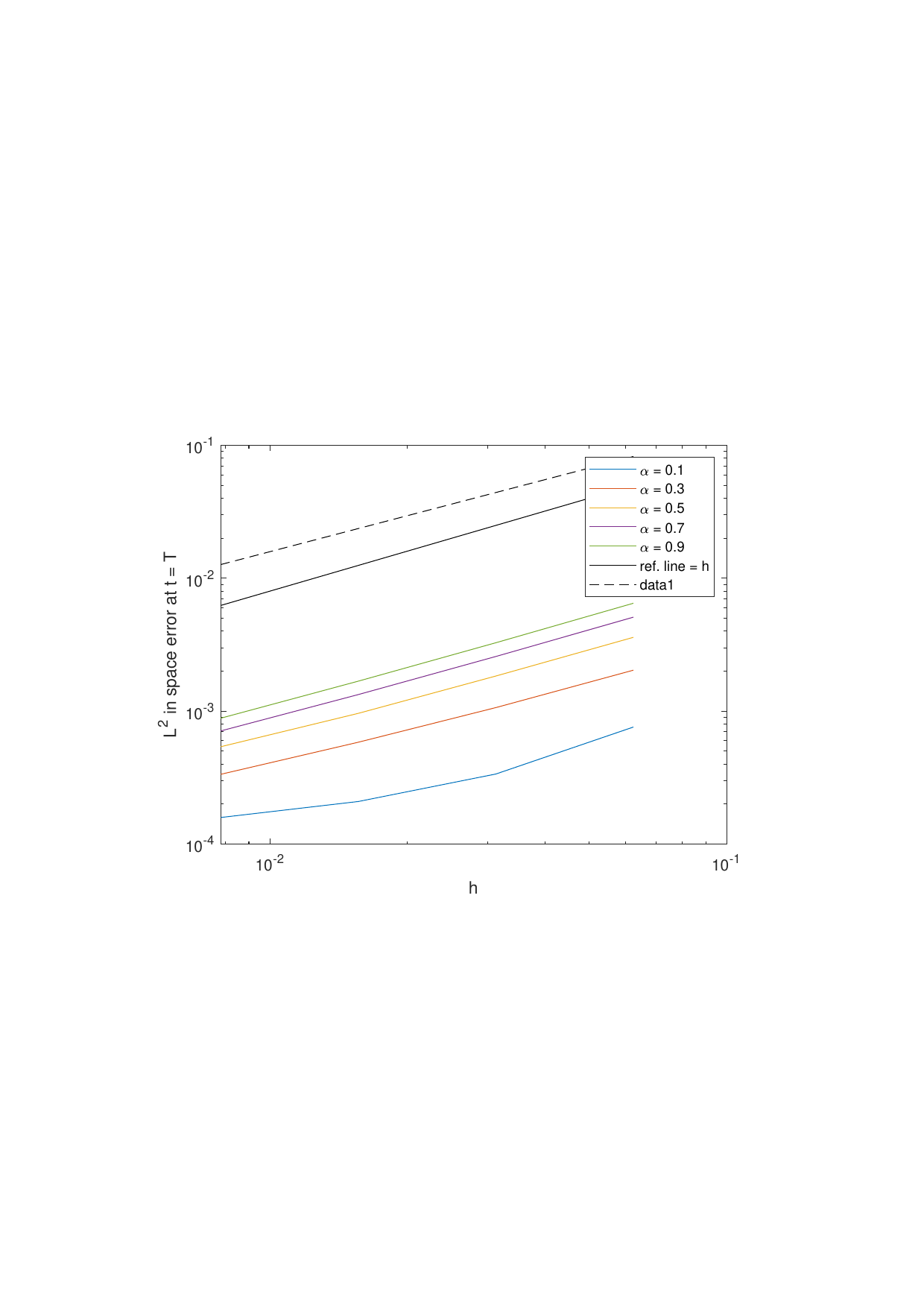}
	\includegraphics[trim = 110 270 150 270, scale = 0.72]{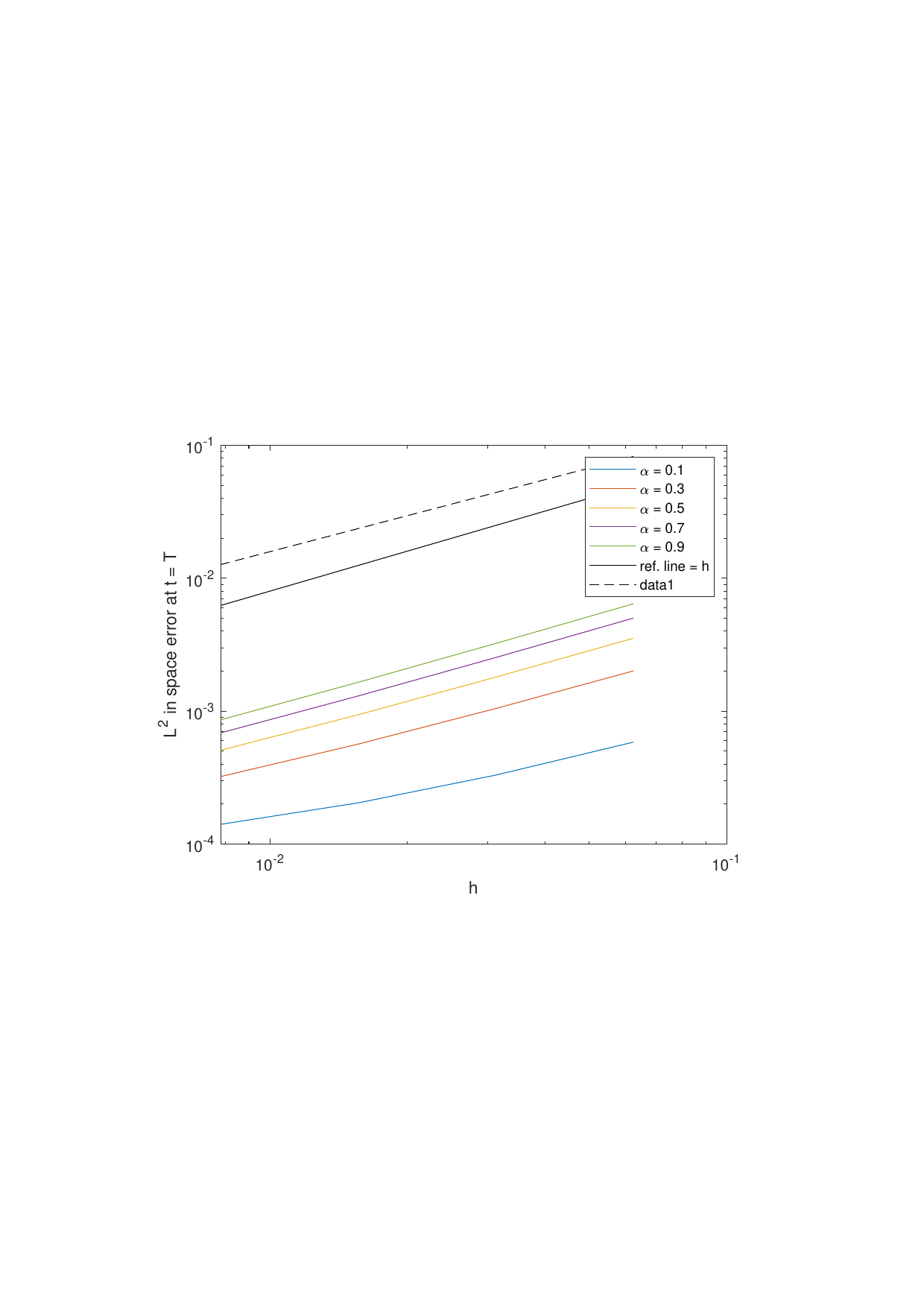}
	\caption{Error $\|u(T) - U^n\|$ with $nh = T = 1$ for the problem \eqref{eqn:NumExampleExact}. On the left: Euler CQ. On the right: BDF2-CQ. }
	\label{fig:errorExact}
\end{figure}

\begin{figure}
	\centering
	\includegraphics[trim = 150 270 100 270, scale = 0.72]{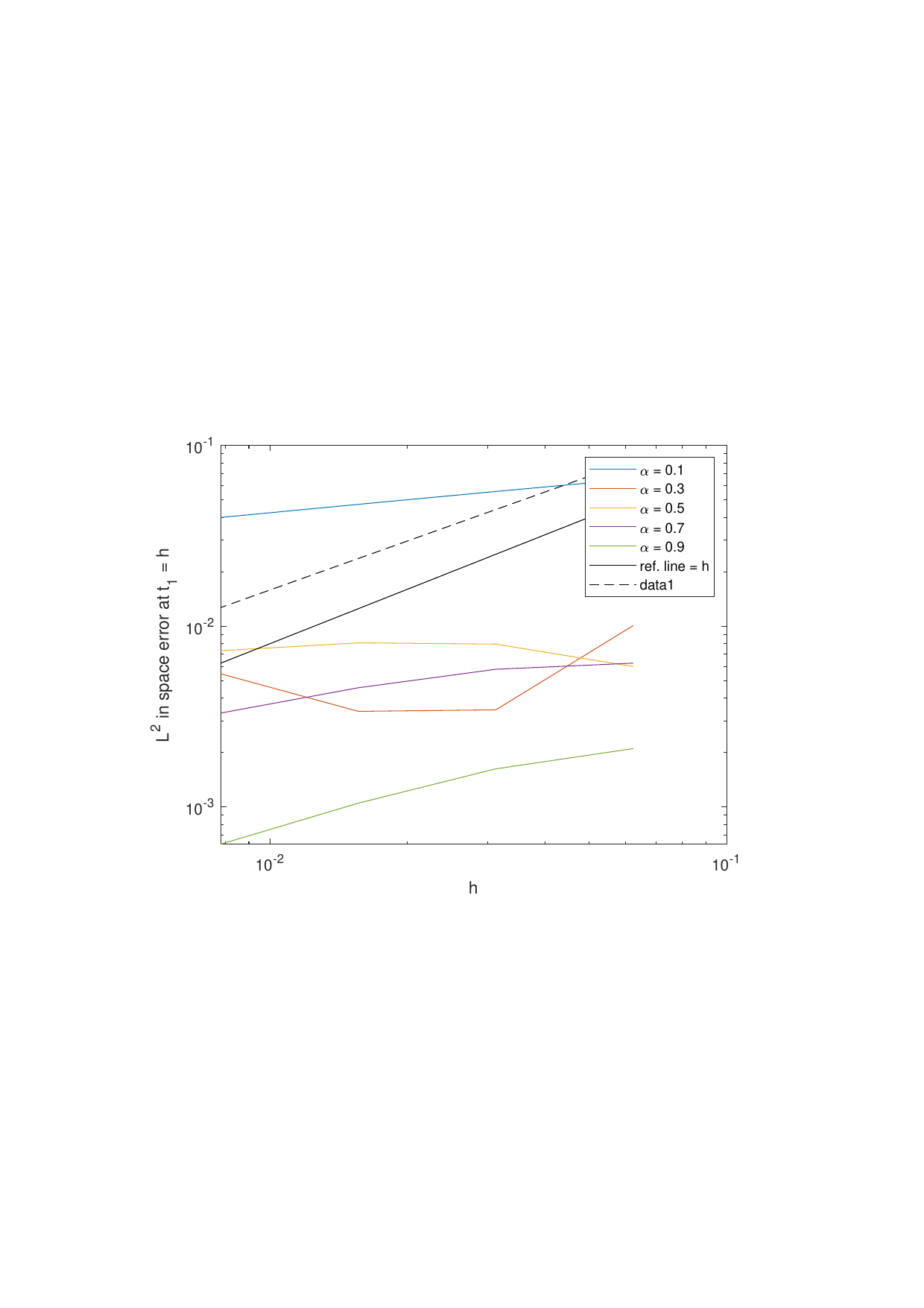}
	\includegraphics[trim = 110 270 150 270, scale = 0.72]{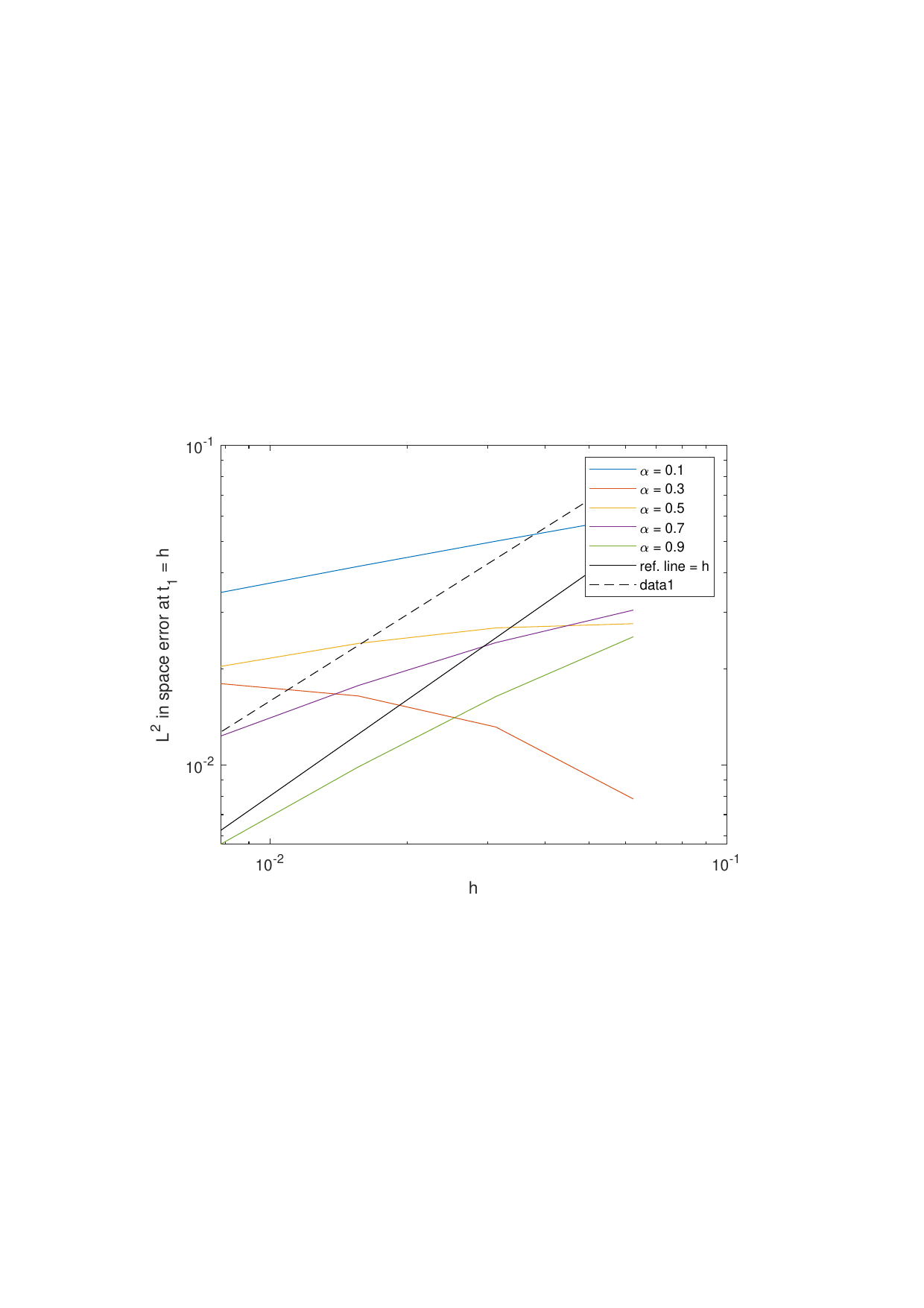}
	\caption{Error $\|u(h) - U^1\|$ for the problem \eqref{eqn:NumExampleExact}. On the left: Euler CQ. On the right: BDF2-CQ. }
	\label{fig:errorExacth}
\end{figure}

For the second example \eqref{eqn:NumExampleRealistic} the error cannot be computed directly and we will estimate the order of convergence by the Aitken extrapolation \cite{Lin85}. Assume that the error can be estimated with $\max_n\|U^n_h - U^{n}_{h/2}\|\approx C h^p$, where as a reference solution we take the one computed on a twice finer grid. Then, by halving the grid once more and taking the logarithm, we can write
\begin{equation}\label{eqn:Aitken}
	p \approx \log_2\frac{\|U^n_h - U^{n}_{h/2}\|}{\|U^n_{h/2} - U^{n}_{h/4}\|}, \quad \text{with fixed} \quad n h = t_n = T. 
\end{equation}
That is, the order is estimated pointwise in time and in the $L^2$ norm in space. The results of our computations are gathered in Tab. \ref{tab:EstOrder}. As we can see, the estimated order is close to $1$ for all values of $\alpha$, again consistent with the Euler discretization measured pointwise in time. The BDF2-CQ scheme also converges, and its empirical order is consistent with $1$. Again, because of the low regularity of the solution, using higher-order quadrature does not necessarily lead to improved accuracy. 

The numerically estimated orders of convergence show that our scheme for this example performs as theoretically anticipated in Theorem \ref{thm:Convergence}: globally for all $0<\alpha<1$ and pointwise at least for $\alpha \geq 1/2$.

\begin{table}
	\centering
	\begin{tabular}{cccccccccc}
		\toprule
		$\alpha$ & 0.1 &  0.2 & 0.3 & 0.4 & 0.5 & 0.6 & 0.7 & 0.8 & 0.9 \\
		\midrule 
		Euler & 0.77 & 0.90 & 1.06 & 1.08 & 0.93 & 0.93 & 1.03 & 0.93 & 1.11 \\
		BDF2 & 0.99 & 0.97 & 0.95 & 0.91 & 0.87 & 0.92 & 0.97 & 0.92 & 0.87 \\
		\bottomrule
	\end{tabular}
	\caption{Estimated order of convergence for the second example \eqref{eqn:NumExampleRealistic} for different $\alpha$ and quadratures. The basis of our calculation has been taken $h = 2^{-8}$ for the formula \eqref{eqn:Aitken} and $T=1$. }
	\label{tab:EstOrder}
\end{table}

The final example concerns the temporal complexity of our algorithm. We have compared the computation times of three ways of implementing the time integration of our PDE: with and without the fast and oblivious algorithm described in the previous section, and the L1 scheme. In Fig. \ref{fig:TimeRatio} we can see the following ratio computed for different values of $\alpha$
\begin{equation}
	\frac{\text{computation time of a standard or L1 implementation}}{\text{computation time of the fast and obliovious implementation}}.
\end{equation}
The problem tested is our second example \eqref{eqn:NumExampleRealistic}. In our calculations, we have taken $h = 2^{-9}$ but have also tested other values. In addition, to obtain Fig. \ref{fig:TimeRatio} independently of various computer background processes, we have performed simulations $100$ times and taken the mean values. The results are uniform with respect to $\alpha$ (note the vertical scale) and indicate that the fast and oblivious implementation is on average twice as fast as the standard implementation. 

\begin{figure}
	\centering
	\includegraphics[trim = 150 270 150 270, scale = 0.9]{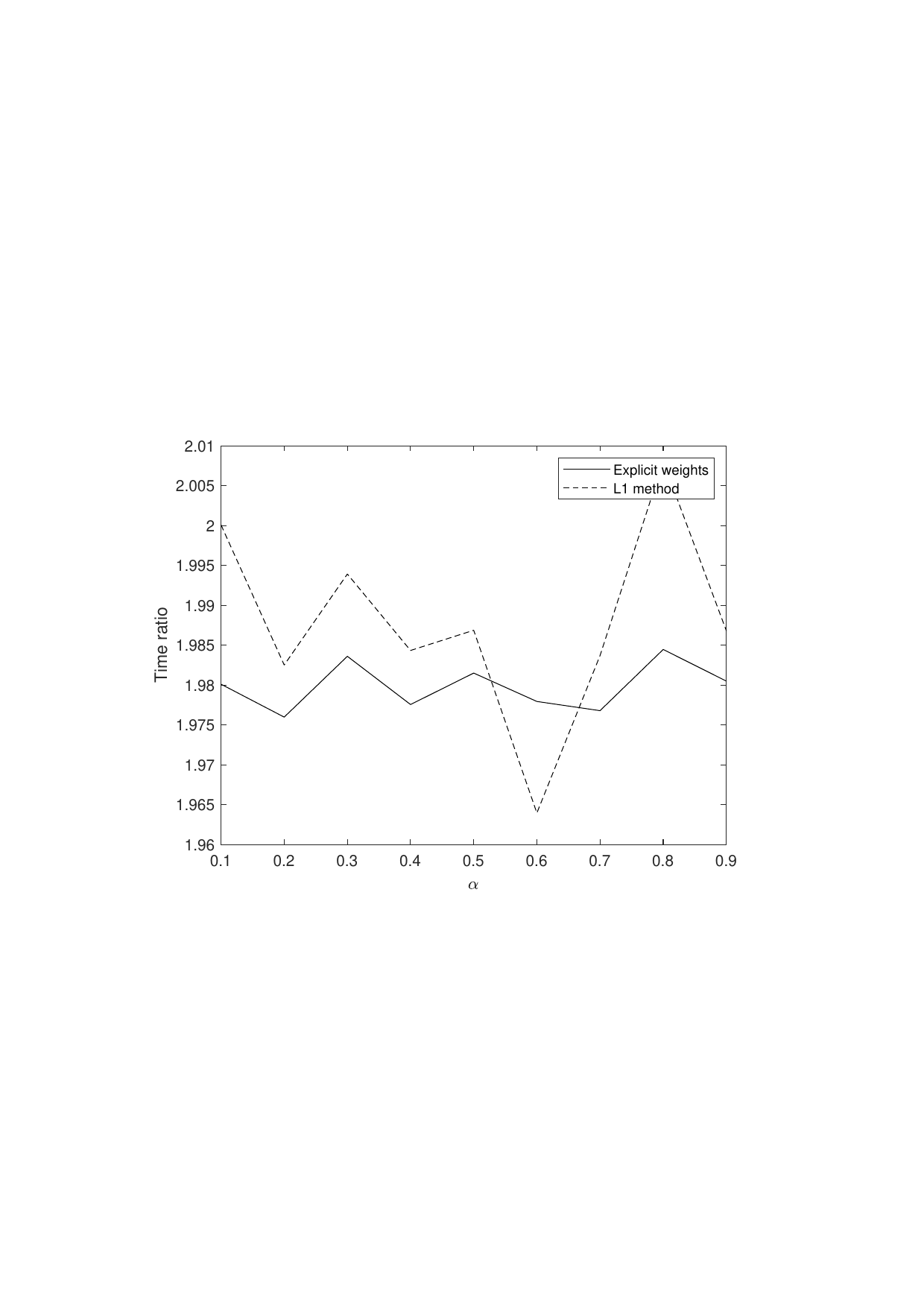}
	\caption{Mean ratio of calculation times: without and with the implementation of the fast and oblivious algorithm. The comparison with the L1 scheme is also presented. }
	\label{fig:TimeRatio}
\end{figure}

\section{Conclusion}
The Convolution Quadrature can be applied to the quasilinear subdiffusion equation yielding a convergent scheme for quadratures satisfying \eqref{eqn:Assumptions}. When supplied with fast and oblivious implementation, the computation time can be reduced by at least twice, which is much desired in the time-fractional setting. Numerical calculations verify our theoretical error estimates that are globally optimal for all $0<\alpha<1$ and pointwise for $\alpha \geq 1/2$. Future work includes suggesting some initial corrections to the CQ scheme and finding optimal estimates for the case of small $\alpha$. 

\section*{Acknowledgement}
The first author has been supported by the ``Beca Leonardo for Researchers and Cultural Creators 2022'' granted by the BBVA Foundation, by the ``Proyecto 16 - proyecto G Plan Propio" of the University of Malaga, and by grant PID2022-137637NB-C21 funded by MICIU/AEI/10.13039/501100011033 and ERDF/EU.

Ł.P. has been supported by the National Science Centre, Poland (NCN) under the grant Sonata Bis with a number NCN 2020/38/E/ST1/00153.


\begin{thebibliography}{10}
	
	\bibitem{akagi2019fractional}
	Goro Akagi.
	\newblock Fractional flows driven by subdifferentials in {H}ilbert spaces.
	\newblock {\em Israel Journal of Mathematics}, 234(2):809--862, 2019.
	
	\bibitem{al2019numerical}
	Mariam Al-Maskari and Samir Karaa.
	\newblock Numerical approximation of semilinear subdiffusion equations with
	nonsmooth initial data.
	\newblock {\em SIAM Journal on Numerical Analysis}, 57(3):1524--1544, 2019.
	
	\bibitem{alikhanov2010priori}
	AA~Alikhanov.
	\newblock A priori estimates for solutions of boundary value problems for
	fractional-order equations.
	\newblock {\em Differential equations}, 46(5):660--666, 2010.
	
	\bibitem{All16}
	Mark Allen, Luis Caffarelli, and Alexis Vasseur.
	\newblock A parabolic problem with a fractional time derivative.
	\newblock {\em Archive for Rational Mechanics and Analysis}, 221(2):603--630,
	2016.
	
	\bibitem{Ashyralyev2012}
	Allaberen Ashyralyev and Pavel~E Sobolevskii.
	\newblock {\em Well-posedness of parabolic difference equations}, volume~69.
	\newblock Birkh{\"a}user, 2012.
	
	\bibitem{Bakaev2003}
	Nikolai Bakaev, Vidar Thom{\'e}e, and Lars Wahlbin.
	\newblock Maximum-norm estimates for resolvents of elliptic finite element
	operators.
	\newblock {\em Mathematics of computation}, 72(244):1597--1610, 2003.
	
	\bibitem{BanLo19}
	L.~Banjai and M.~L\'{o}pez-Fern\'{a}ndez.
	\newblock Efficient high order algorithms for fractional integrals and
	fractional differential equations.
	\newblock {\em Numer. Math.}, 141(2):289--317, 2019.
	
	\bibitem{banjai2019runge}
	Lehel Banjai and Christian Lubich.
	\newblock Runge--kutta convolution coercivity and its use for time-dependent
	boundary integral equations.
	\newblock {\em IMA Journal of Numerical Analysis}, 39(3):1134--1157, 2019.
	
	\bibitem{banjai2022integral}
	Lehel Banjai and Francisco-Javier Sayas.
	\newblock {\em Integral equation methods for evolutionary PDE: A convolution
		quadrature approach}, volume~59.
	\newblock Springer Nature, 2022.
	
	\bibitem{bear2013dynamics}
	Jacob Bear.
	\newblock {\em Dynamics of fluids in porous media}.
	\newblock Courier Corporation, 2013.
	
	\bibitem{brenner2008mathematical}
	Susanne~C Brenner.
	\newblock {\em The mathematical theory of finite element methods}.
	\newblock Springer, 2008.
	
	\bibitem{cuesta2006convolution}
	Eduardo Cuesta, Christian Lubich, and Cesar Palencia.
	\newblock Convolution quadrature time discretization of fractional
	diffusion-wave equations.
	\newblock {\em Mathematics of Computation}, 75(254):673--696, 2006.
	
	\bibitem{del2023numerical}
	F{\'e}lix del Teso and {\L}ukasz P{\l}ociniczak.
	\newblock Numerical methods and regularity properties for viscosity solutions
	of nonlocal in space and time diffusion equations.
	\newblock {\em arXiv preprint arXiv:2311.14317}, 2023.
	
	\bibitem{Dip19}
	Serena Dipierro, Enrico Valdinoci, and Vincenzo Vespri.
	\newblock Decay estimates for evolutionary equations with fractional
	time-diffusion.
	\newblock {\em Journal of Evolution Equations}, 19(2):435--462, 2019.
	
	\bibitem{dixon1985order}
	Jennifer Dixon.
	\newblock On the order of the error in discretization methods for weakly
	singular second kind non-smooth solutions.
	\newblock {\em BIT Numerical Mathematics}, 25(4):623--634, 1985.
	
	\bibitem{El20}
	A~El~Abd, SE~Kichanov, M~Taman, KM~Nazarov, DP~Kozlenko, and Wael~M Badawy.
	\newblock Determination of moisture distributions in porous building bricks by
	neutron radiography.
	\newblock {\em Applied Radiation and Isotopes}, 156:108970, 2020.
	
	\bibitem{gardner1959solutions}
	WR~Gardner.
	\newblock Solutions of the flow equation for the drying of soils and other
	porous media.
	\newblock {\em Soil Science Society of America Journal}, 23(3):183--187, 1959.
	
	\bibitem{jin2016two}
	Bangti Jin, Raytcho Lazarov, and Zhi Zhou.
	\newblock Two fully discrete schemes for fractional diffusion and
	diffusion-wave equations with nonsmooth data.
	\newblock {\em SIAM journal on scientific computing}, 38(1):A146--A170, 2016.
	
	\bibitem{jin2019numerical}
	Bangti Jin, Raytcho Lazarov, and Zhi Zhou.
	\newblock Numerical methods for time-fractional evolution equations with
	nonsmooth data: a concise overview.
	\newblock {\em Computer Methods in Applied Mechanics and Engineering},
	346:332--358, 2019.
	
	\bibitem{JinLiZhou2017}
	Bangti Jin, Buyang Li, and Zhi Zhou.
	\newblock Correction of high-order bdf convolution quadrature for fractional
	evolution equations.
	\newblock {\em SIAM J. Sci. Comput.}, 39(6):A3129--A3152, 2017.
	
	\bibitem{jin2018numerical}
	Bangti Jin, Buyang Li, and Zhi Zhou.
	\newblock Numerical analysis of nonlinear subdiffusion equations.
	\newblock {\em SIAM Journal on Numerical Analysis}, 56(1):1--23, 2018.
	
	\bibitem{JinLiZhou2019}
	Bangti Jin, Buyang Li, and Zhi Zhou.
	\newblock Subdiffusion with a time-dependent coefficient: Analysis and
	numerical solution.
	\newblock {\em Mathematics of Computation}, 88:2157--2186, 2019.
	
	\bibitem{JinLiZhou2020}
	Bangti Jin, Buyang Li, and Zhi Zhou.
	\newblock Subdiffusion with time-dependent coefficients: Improved regularity
	and second-order time stepping.
	\newblock {\em Numerische Mathematik}, 145(1):883--913, 2020.
	
	\bibitem{JinQuanWohlZhou24}
	Bangti Jin, Qimeng Quan, Barbara Wohlmuth, and Zhi Zhou.
	\newblock Regularity analysis and high-order time stepping scheme for
	quasilinear subdiffusion.
	\newblock 2024.
	
	\bibitem{jin2017analysis}
	Bangti Jin and Zhi Zhou.
	\newblock An analysis of {G}alerkin proper orthogonal decomposition for
	subdiffusion.
	\newblock {\em ESAIM: Mathematical Modelling and Numerical Analysis},
	51(1):89--113, 2017.
	
	\bibitem{jin2021numerical}
	Bangti Jin and Zhi Zhou.
	\newblock Numerical estimation of a diffusion coefficient in subdiffusion.
	\newblock {\em SIAM Journal on Control and Optimization}, 59(2):1466--1496,
	2021.
	
	\bibitem{Kla12}
	Joseph Klafter, SC~Lim, and Ralf Metzler.
	\newblock {\em Fractional dynamics: recent advances}.
	\newblock World Scientific, 2012.
	
	\bibitem{kopteva2019error}
	Natalia Kopteva.
	\newblock Error analysis of the l1 method on graded and uniform meshes for a
	fractional-derivative problem in two and three dimensions.
	\newblock {\em Mathematics of Computation}, 88(319):2135--2155, 2019.
	
	\bibitem{Kop22}
	Natalia Kopteva.
	\newblock Pointwise-in-time a posteriori error control for time-fractional
	parabolic equations.
	\newblock {\em Applied Mathematics Letters}, 123:107515, 2022.
	
	\bibitem{kubica2020time}
	Adam Kubica, Katarzyna Ryszewska, and Masahiro Yamamoto.
	\newblock {\em Time-Fractional Differential Equations: A Theoretical
		Introduction}.
	\newblock Springer, 2020.
	
	\bibitem{kuntz2001experimental}
	Michel K{\"u}ntz and Paul Lavall{\'e}e.
	\newblock Experimental evidence and theoretical analysis of anomalous diffusion
	during water infiltration in porous building materials.
	\newblock {\em Journal of Physics D: Applied Physics}, 34(16):2547, 2001.
	
	\bibitem{li2022exponential}
	Buyang Li and Shu Ma.
	\newblock Exponential convolution quadrature for nonlinear subdiffusion
	equations with nonsmooth initial data.
	\newblock {\em SIAM Journal on Numerical Analysis}, 60(2):503--528, 2022.
	
	\bibitem{li2018analysis}
	Dongfang Li, Hong-Lin Liao, Weiwei Sun, Jilu Wang, and Jiwei Zhang.
	\newblock Analysis of {L1}-{G}alerkin {FEM}s for time-fractional nonlinear
	parabolic problems.
	\newblock {\em Communications in Computational Physics}, 24(1):86--103, 2018.
	
	\bibitem{Lia18}
	Hong-lin Liao, Dongfang Li, and Jiwei Zhang.
	\newblock Sharp error estimate of the nonuniform {L}1 formula for linear
	reaction-subdiffusion equations.
	\newblock {\em SIAM Journal on Numerical Analysis}, 56(2):1112--1133, 2018.
	
	\bibitem{Lin85}
	Peter Linz.
	\newblock {\em Analytical and numerical methods for Volterra equations}.
	\newblock SIAM, 1985.
	
	\bibitem{Liu18}
	Wei Liu, Michael R\"ockner, and Jos{\'e}~Lu{\'\i}s da~Silva.
	\newblock Quasi-linear (stochastic) partial differential equations with
	time-fractional derivatives.
	\newblock {\em SIAM Journal on Mathematical Analysis}, 50(3):2588--2607, 2018.
	
	\bibitem{Lu88I}
	C.~Lubich.
	\newblock Convolution quadrature and discretized operational calculus. {I}.
	\newblock {\em Numer. Math.}, 52(2):129--145, 1988.
	
	\bibitem{Lu88II}
	C.~Lubich.
	\newblock Convolution quadrature and discretized operational calculus. {II}.
	\newblock {\em Numer. Math.}, 52(4):413--425, 1988.
	
	\bibitem{LuOs}
	Ch. Lubich and A.~Ostermann.
	\newblock Runge-{K}utta methods for parabolic equations and convolution
	quadrature.
	\newblock {\em Math. Comp.}, 60(201):105--131, 1993.
	
	\bibitem{lubich2004convolution}
	Christian Lubich.
	\newblock Convolution quadrature revisited.
	\newblock {\em BIT Numerical Mathematics}, 44(3):503--514, 2004.
	
	\bibitem{luskin1982smoothing}
	Mitchell Luskin and Rolf Rannacher.
	\newblock On the smoothing property of the galerkin method for parabolic
	equations.
	\newblock {\em SIAM Journal on Numerical Analysis}, 19(1):93--113, 1982.
	
	\bibitem{magdziarz2009black}
	Marcin Magdziarz.
	\newblock Black-scholes formula in subdiffusive regime.
	\newblock {\em Journal of Statistical Physics}, 136:553--564, 2009.
	
	\bibitem{Met00}
	Ralf Metzler and Joseph Klafter.
	\newblock The random walk's guide to anomalous diffusion: a fractional dynamics
	approach.
	\newblock {\em Physics reports}, 339(1):1--77, 2000.
	
	\bibitem{Mus18}
	Kassem Mustapha.
	\newblock {FEM} for time-fractional diffusion equations, novel optimal error
	analyses.
	\newblock {\em Mathematics of Computation}, 87(313):2259--2272, 2018.
	
	\bibitem{Plo15}
	{\L}ukasz P{\l}ociniczak.
	\newblock Analytical studies of a time-fractional porous medium equation.
	derivation, approximation and applications.
	\newblock {\em Communications in Nonlinear Science and Numerical Simulation},
	24(1):169--183, 2015.
	
	\bibitem{plociniczak2022error}
	{\L}ukasz P{\l}ociniczak.
	\newblock Error of the galerkin scheme for a semilinear subdiffusion equation
	with time-dependent coefficients and nonsmooth data.
	\newblock {\em Computers \& Mathematics with Applications}, 127:181--191, 2022.
	
	\bibitem{plociniczak2023linear}
	{\L}ukasz P{\l}ociniczak.
	\newblock A linear galerkin numerical method for a quasilinear subdiffusion
	equation.
	\newblock {\em Applied Numerical Mathematics}, 185:203--220, 2023.
	
	\bibitem{sakamoto2011initial}
	Kenichi Sakamoto and Masahiro Yamamoto.
	\newblock Initial value/boundary value problems for fractional diffusion-wave
	equations and applications to some inverse problems.
	\newblock {\em Journal of Mathematical Analysis and Applications},
	382(1):426--447, 2011.
	
	\bibitem{stynes2017error}
	Martin Stynes, Eugene O'Riordan, and Jos{\'e}~Luis Gracia.
	\newblock Error analysis of a finite difference method on graded meshes for a
	time-fractional diffusion equation.
	\newblock {\em SIAM Journal on Numerical Analysis}, 55(2):1057--1079, 2017.
	
	\bibitem{Sun17}
	Titiwat Sungkaworn, Marie-Lise Jobin, Krzysztof Burnecki, Aleksander Weron,
	Martin~J Lohse, and Davide Calebiro.
	\newblock Single-molecule imaging reveals receptor--{G} protein interactions at
	cell surface hot spots.
	\newblock {\em Nature}, 550(7677):543, 2017.
	
	\bibitem{Tho07}
	Vidar Thom{\'e}e.
	\newblock {\em Galerkin finite element methods for parabolic problems},
	volume~25.
	\newblock Springer Science \& Business Media, 2007.
	
	\bibitem{Top17}
	Erwin Topp and Miguel Yangari.
	\newblock Existence and uniqueness for parabolic problems with {Caputo} time
	derivative.
	\newblock {\em Journal of Differential Equations}, 262(12):6018--6046, 2017.
	
	\bibitem{Ver15}
	Vicente Vergara and Rico Zacher.
	\newblock Optimal decay estimates for time-fractional and other nonlocal
	subdiffusion equations via energy methods.
	\newblock {\em SIAM Journal on Mathematical Analysis}, 47(1):210--239, 2015.
	
	\bibitem{wang2020high}
	Kai Wang and Zhi Zhou.
	\newblock High-order time stepping schemes for semilinear subdiffusion
	equations.
	\newblock {\em SIAM Journal on Numerical Analysis}, 58(6):3226--3250, 2020.
	
	\bibitem{weeks2002subdiffusion}
	Eric~R Weeks and David~A Weitz.
	\newblock Subdiffusion and the cage effect studied near the colloidal glass
	transition.
	\newblock {\em Chemical physics}, 284(1-2):361--367, 2002.
	
	\bibitem{Wit21}
	Petra Wittbold, Patryk Wolejko, and Rico Zacher.
	\newblock Bounded weak solutions of time-fractional porous medium type and more
	general nonlinear and degenerate evolutionary integro-differential equations.
	\newblock {\em Journal of Mathematical Analysis and Applications},
	499(1):125007, 2021.
	
	\bibitem{Zac12}
	Rico Zacher.
	\newblock Global strong solvability of a quasilinear subdiffusion problem.
	\newblock {\em Journal of Evolution Equations}, 12(4):813--831, 2012.
	
\end{thebibliography}
\end{document}